\theoremstyle{plain} 
\newtheorem{theo}{Theorem}[section]  
\newtheorem{prop}[theo]{Proposition}
\newtheorem{cor}[theo]{Corollary}
\newtheorem{lemma}[theo]{Lemma}
\theoremstyle{definition}
\newtheorem{defi}[theo]{Definition}
\newtheorem{ex}[theo]{Example}
\newtheorem{caso}{Case}       
\theoremstyle{remark}
\newtheorem{remark}[theo]{Remark}
\newtheorem{nothing}[theo]{\noindent\!\!\bf}
\newtheorem{notation}[theo]{Notation}
\DeclareMathOperator{\Sing}{Sing}
\DeclareMathOperator{\ord}{ord}
\DeclareMathOperator{\Mat}{Mat}
\DeclareMathOperator{\WeDiv}{WeDiv}
\DeclareMathOperator{\CaDiv}{CaDiv}
\DeclareMathOperator{\Div}{Div}
\DeclareMathOperator{\Pic}{Pic}
\DeclareMathOperator{\Cl}{Cl}
\DeclareMathOperator{\GL}{GL}
\DeclareMathOperator{\Supp}{Supp}
\DeclareMathOperator{\length}{length}
\DeclareMathOperator{\pr}{pr}
\newcommand\Z{\mathbb{Z}}
\newcommand\N{\mathbb{N}}
\newcommand\C{\mathbb{C}}
\newcommand\Q{\textbf{Q}}
\newcommand\bP{\mathbb{P}}
\newcommand\w{\omega}
\newcommand\E{\mathcal{E}}
\newcommand\cO{\mathcal{O}}
\newcommand\bd{\mathbf{d}}
\newcommand\ba{\mathbf{a}}
\newcommand\bxi{\boldsymbol{\xi}}
\title[Intersection Theory and $\Q$-Resolutions]{Intersection Theory on Abelian-Quotient $V$-Surfaces and $\Q$-Resolutions}
\author[E.~Artal]{Enrique Artal Bartolo}
\address[E.~Artal]{Departamento de Matem\'{a}ticas-IUMA  \\
Universidad de Zaragoza \\
C/~Pedro Cerbuna 12, 50009, Zaragoza, Spain}
\email{artal@unizar.es}
\author[J.~Mart\'{i}n-Morales]{Jorge Mart\'{i}n-Morales}
\address[J.~Mart\'{i}n-Morales]{Centro Universitario de la Defensa \\
Academia General Militar \\
Ctra.~de Huesca s/n. 50090, Zaragoza, Spain}
\email{jorge@unizar.es}
\urladdr{\url{http://cud.unizar.es/martin}}
\author[J.~Ortigas-Galindo]{Jorge Ortigas-Galindo}
\address[J.~Ortigas-Galindo]{Departamento de Matem\'{a}ticas-IUMA  \\
Universidad de Zaragoza \\
C/~Pedro Cerbuna 12, 50009, Zaragoza, Spain}
\email{jortigas@unizar.es}
\urladdr{\url{http://riemann.unizar.es/~jortigas/}}
\date{\today}
\keywords{Quotient singularity, intersection number, embedded $\Q$-resolution.}
\subjclass[2000]{Primary: 32S25; Secondary: 32S45}
\begin{document}

\begin{abstract}
In this paper we study the intersection theory on surfaces with abelian quotient
singularities and we derive properties of quotients of weighted projective planes.
We also use this theory to study weighted blow-ups in order to construct
embedded $\Q$-resolutions of plane curve singularities 
and abstract $\Q$-resolutions of surfaces.
\end{abstract}

\maketitle
\setcounter{tocdepth}{1}
\tableofcontents

\section*{Introduction}
Intersection theory is a powerful tool in complex algebraic (and analytic) geometry, see~\cite{Fulton98} for 
a wonderful exposition. The case of smooth surfaces is of particular interest since
the intersection of objects is measured by integers.

The main objects involved in intersection theory on surfaces are divisors, which have two main incarnations,
Weil and Cartier. These coincide in the smooth case, but not in general. In the
singular case the two concepts are different and a geometric interpretation of intersection theory is yet
to be developed. A general definition for normal surfaces was given by Mumford~\cite{Mumford61}.

In this work we are interested in the intersection theory on $V$-surfaces with abelian quotient singularities.
We make use of our result in~\cite{kjj-qcw}, where we proved that the concepts of rational Weil and Cartier
divisors coincide. We will study their geometric properties and prove that the definition in this paper 
coincides with
Mumford's one. The most interesting points are the applications.

Probably the most well-known $V$-surfaces are the weighted projective planes. We will provide an extensive
study of intersection theory on these planes and on their quotients.

Closely related with the weighted
projective planes we have the weighted blow-ups. As opposed to standard ones, these blow-ups do not
produce smooth varieties, but the result may only have abelian quotient singularities. They can be used
to understand the birational properties of quotients of weighted projective planes and also
to obtain the so-called $\Q$-resolutions of singularities, where the usual
conditions are weakened: we allow the total space
to have abelian quotient singularities and the condition of normal crossing divisors
is replaced by $\mathbb{Q}$-normal crossing divisors. One of the main interest
of $\Q$-resolutions of singularities is the following: their combinatorial complexity
is extremely lower than the complexity of smooth resolutions, but they provide essentially
the same information for the properties of the singularity. 

Note that Veys has already studied this kind of embedded resolutions for plane curve
singularities, see \cite{Veys97}, in order to simplify the computation of the topological zeta function.

In both applications, we need intersection theory. Note that rational intersection numbers appear in
a natural way.

{\bf Acknowledgements.}
We thank J.I. Cogolludo for his fruitful conversations and ideas. All authors are partially supported by
the Spanish projects MTM2010-2010-21740-C02-02 and ``E15 Grupo Consolidado
Geometr{\'i}a'' from the government of Arag{\'o}n. Also, the second author,
J.~Mart{\'i}n-Morales, is partially supported by FQM-333, Junta de Andaluc{\'i}a.

\section{$V$-Manifolds and Quotient Singularities}

We sketch some definitions and properties, see~\cite{kjj-qcw} for a more detailed exposition.

\begin{defi}
A $V$-manifold of dimension $n$ is a complex analytic space which admits an open
covering $\{U_i\}$ such that $U_i$ is analytically isomorphic to $B_i/G_i$ where
$B_i \subset \C^n$ is an open ball and $G_i$ is a finite subgroup of $GL(n,\C)$.
\end{defi}
 
The concept of $V$-manifolds was introduced in~\cite{Satake56} and they have the same homological
properties over $\mathbb{Q}$ as manifolds. For instance, they admit a Poincar{\'e}
duality if they are compact and carry a pure Hodge structure if they are compact
and Kähler, see~\cite{Baily56}. They have been classified locally by
Prill~\cite{Prill67}. 

It is enough to consider the so-called
\emph{small subgroups} $G\subset\GL(n,\C)$, i.e.~without
rotations around hyperplanes other than the identity.

\begin{theo}\label{th_Prill}{\rm (\cite{Prill67}).}~Let $G_1$, $G_2$ be small
subgroups of $\GL(n,\C)$. Then $\C^n/G_1$ is isomorphic to $\C^n/G_2$ if and only
if $G_1$ and $G_2$ are conjugate subgroups. $\hfill \Box$
\end{theo}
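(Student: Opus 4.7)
The implication $(\Leftarrow)$ is immediate: any $h\in\GL(n,\C)$ with $G_2=hG_1h^{-1}$ descends to a well-defined isomorphism $\C^n/G_1\to\C^n/G_2$, $[x]\mapsto[hx]$. The content lies in the converse, for which the plan is to lift a given isomorphism $\varphi\colon\C^n/G_1\to\C^n/G_2$ to an ambient biholomorphism of $\C^n$ and then linearise it at the origin.

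Since each $G_i$ is small, every non-identity element has a linear fixed locus of codimension $\geq 2$; hence the union $F_i\subset\C^n$ of these loci is an analytic subset of codimension $\geq 2$, and $\Sing(\C^n/G_i)=\pi_i(F_i)$. The complement $\C^n\setminus F_i$ is therefore simply connected, and $\pi_i\colon\C^n\setminus F_i\to(\C^n/G_i)_{\mathrm{reg}}$ is the universal cover with deck transformation group $G_i$. The restriction of $\varphi$ to regular loci lifts (after fixing a base point) to a biholomorphism $\tilde\varphi\colon\C^n\setminus F_1\to\C^n\setminus F_2$ satisfying $\tilde\varphi\,G_1\,\tilde\varphi^{-1}=G_2$; in particular $|G_1|=|G_2|$. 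By Riemann's extension theorem across a codimension-$\geq 2$ analytic set, $\tilde\varphi$ prolongs to a biholomorphism of $\C^n$ onto itself.

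To place the fixed point correctly, observe that $[0]\in\C^n/G_i$ is intrinsically distinguished: its $\pi_i$-fibre is $\{0\}$, a singleton, because every element of $G_i\subset\GL(n,\C)$ fixes $0$; equivalently, the local fundamental group at $[0]$ is all of $G_i$, whereas at any other singular point only a proper subgroup acts. Thus $\varphi([0])=[0]$, and the functional equation $\pi_2\circ\tilde\varphi=\varphi\circ\pi_1$ forces $\tilde\varphi(0)=0$. Setting $h:=D\tilde\varphi(0)\in\GL(n,\C)$, for each $g\in G_1$ put $g':=\tilde\varphi\,g\,\tilde\varphi^{-1}\in G_2$; differentiating the identity $\tilde\varphi\circ g=g'\circ\tilde\varphi$ at $0$ and using that $g,g'$ are linear with $g(0)=g'(0)=0$ gives $h\,g=g'\,h$. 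Therefore $hG_1h^{-1}\subseteq G_2$, and the equality $|G_1|=|G_2|$ upgrades this to $hG_1h^{-1}=G_2$.

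The main obstacle is the intrinsic recognition in Step~3 that the image of the origin must be preserved by any isomorphism; the lift (Step~2) and the subsequent linearisation (Step~4) are essentially formal once one has the covering-space description of the regular locus and Hartogs' extension across $F_i$. If one instead works with germs at the origin, which is the usual setting for applying this theorem, then Step~3 is automatic and only the lift-plus-differentiate argument is required.
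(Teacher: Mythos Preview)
The paper does not supply a proof of this theorem; it is quoted from Prill with a terminal $\Box$, so there is nothing to compare against. Your argument is essentially the standard one, and Steps~1, 2, and 4 are fine. There is, however, a genuine error in the justification of Step~3, though fortunately that step turns out to be unnecessary.

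Your claim that $[0]$ is intrinsically distinguished because it is the unique point with stabiliser all of $G_i$ is false in general. Take $G=\langle\diag(1,-1,-1)\rangle\subset\GL(3,\C)$: this group is small (the nontrivial element has fixed locus of codimension~$2$), yet every point of the $x$-axis has full stabiliser, and $\C^3/G\cong\C\times A_1$ has a whole line of points locally isomorphic to $[0]$. So an isomorphism $\varphi$ need not fix $[0]$, and neither of your two proposed characterisations of $[0]$ survives.

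The good news is that you do not need $\tilde\varphi(0)=0$ at all. In Step~4 you differentiate $\tilde\varphi\circ g=g'\circ\tilde\varphi$ at $0$. On the left, $g(0)=0$ and $Dg(0)=g$ because $g\in G_1\subset\GL(n,\C)$ is linear; on the right, $g'\in G_2$ is also linear, so $Dg'(\tilde\varphi(0))=g'$ regardless of where $\tilde\varphi(0)$ sits. Hence $h\,g=g'\,h$ with $h:=D\tilde\varphi(0)\in\GL(n,\C)$, and the conclusion $hG_1h^{-1}=G_2$ follows exactly as you wrote. Simply delete Step~3 (and the paragraph calling it the ``main obstacle'') and the proof is complete.
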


We fix the notations when $G$ is abelian.

\begin{nothing}\label{notation_action}
For $\bd: = {}^t(d_1 \ldots d_r)$ we denote 
$\mu_{\bd} := \mu_{d_1} \times \cdots \times \mu_{d_r}$ a finite
abelian group written as a product of finite cyclic groups, that is, $\mu_{d_i}$
is the cyclic group of $d_i$-th roots of unity in $\C$. Consider a matrix of weight
vectors 
$$
A := (a_{ij})_{i,j} = [\ba_1 \, | \, \cdots \, | \, \ba_n ] \in\Mat (r
\times n, \Z),\quad
\ba_j:={}^t (a_{1 j}\dots a_{r j})\in\Mat(r\times 1,\Z),
$$ 
and the action
\begin{equation}\label{action_XdA}
\begin{array}{cr}
( \mu_{d_1} \times \cdots \times \mu_{d_r} ) \times \C^n  \longrightarrow  \C^n,&\bxi_\bd := (\xi_{d_1}, \ldots, \xi_{d_r}),
\\[0.15cm]
\big( \bxi_{\bd} , \mathbf{x} \big)  \mapsto  (\xi_{d_1}^{a_{11}} \cdot\ldots\cdot
\xi_{d_r}^{a_{r1}}\, x_1,\, \ldots\, , \xi_{d_1}^{a_{1n}}\cdot\ldots\cdot
\xi_{d_r}^{a_{rn}}\, x_n ),&
\mathbf{x} := (x_1,\ldots,x_n).
\end{array}
\end{equation}
Note that the $i$-th row of the matrix $A$ can be considered modulo $d_i$. The
set of all orbits $\C^n / G$ is called ({\em cyclic}) {\em quotient space of
type $(\bd;A)$} and it is denoted by
$$
  X(\bd; A) := X \left( \begin{array}{c|ccc} d_1 & a_{11} & \cdots & a_{1n}\\
\vdots & \vdots & \ddots & \vdots \\ d_r & a_{r1} & \cdots & a_{rn} \end{array}
\right).
$$
The orbit of an element $\mathbf{x}\in \C^n$ under this action is denoted
by $[\mathbf{x}]_{(\bd; A)}$ and the subindex is omitted if no ambiguity
seems likely to arise. Using multi-index notation
the action takes the simple form
\begin{eqnarray*}
\mu_\bd \times \C^n & \longrightarrow & \C^n, \\
(\bxi_\bd, \mathbf{x}) & \mapsto & \bxi_\bd\cdot\mathbf{x}:=(\bxi_\bd^{\ba_1}\, x_1, \ldots, \bxi_{\bd}^{\ba_n}\, x_n).
\end{eqnarray*}
\end{nothing}

The quotient of $\C^n$ by a finite abelian group
is always isomorphic to a quotient space of type $(\bd;A)$, see~\cite{kjj-qcw} for a proof of this classic result.
Different types $(\bd;A)$ can give rise to isomorphic quotient spaces.

\begin{ex}\label{Ex_quo_dim1} When $n=1$ all spaces $X(\bd;A)$ are
isomorphic to~$\C$. It is clear that we can assume that $\gcd(d_i, a_{i})=1$.
If $r=1$, the map $[x] \mapsto x^{d_1}$ gives an isomorphism between $X(d_1; a_{1})$ and
$\C$. 

Let us consider the case $r=2$. Note that
$\C/(\mu_{d_1} \times \mu_{d_2})$ equals $(\C/\mu_{d_1})/ \mu_{d_2}$.
Using the previous isomorphism, it is isomorphic to $X(d_2, d_1 a_2)$,
which is again isomorphic to $\C$. By induction, we obtain the result for any~$r$.
\end{ex}

The following lemma states some moves that leave unchanged the
isomorphism type of $X(\bd;A)$.

\begin{lemma}\label{movs-dA}
The following operations do not change the isomorphism type of $X(\bd;A)$.
\begin{enumerate}[\rm(1)]
\item Permutation $\sigma$ of columns of $A$, $[(x_1,\dots,x_n)]\mapsto[(x_{\sigma(1)},\dots,x_{\sigma(n)})]$.
\item Permutation of rows of $(\bd;A)$, $[\mathbf{x}]\mapsto[\mathbf{x}]$.
\item Multiplication of a row of $(\bd;A)$ by a positive integer, $[\mathbf{x}]\mapsto[\mathbf{x}]$.
\item Multiplication of a row of $A$ by an integer coprime with the
corresponding row in $\bd$, $[\mathbf{x}]\mapsto[\mathbf{x}]$.
\item Replace $a_{i j}$ by $a_{i j}+k d_j$, $[\mathbf{x}]\mapsto[\mathbf{x}]$.
\item If $e$ is coprime with $a_{1, n}$ and divides $d_1$ and $a_{1,j}$, $1\leq j<n$, then replace,
$a_{i,n}\mapsto e a_{i,n}$, $[(x_1,\dots,x_n)]\mapsto[(x_1,\dots,x_n^e)]$.
\item If $d_r=1$ then
eliminate the last row, $[\mathbf{x}]\mapsto[\mathbf{x}]$.
\end{enumerate}
\end{lemma}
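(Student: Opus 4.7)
\medskip
\noindent\textbf{Proof plan.} The plan is to verify the seven operations one by one: for (1)--(5) and (7) I would exhibit an equivariant biholomorphism of $\C^n$ (often the identity or a permutation of factors) between the source and target actions, while (6) genuinely requires a change of variables that passes through a subquotient.

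For (1), the coordinate permutation $\sigma\colon\C^n\to\C^n$ is a biholomorphism and is equivariant between the $(\bd;A)$-action and the $(\bd;\sigma A)$-action, hence descends to an isomorphism of quotients. For (2), permuting the rows of $(\bd;A)$ is merely a relabeling of the factors of $\mu_\bd=\mu_{d_1}\times\cdots\times\mu_{d_r}$. For (3), when the $i$-th row is multiplied by a positive integer $k$, the map $\mu_{kd_i}\to\mu_{d_i}$, $\xi\mapsto\xi^k$, is surjective and satisfies $\xi^{ka_{i,j}}=(\xi^k)^{a_{i,j}}$, so both groups have the same orbits on $\C^n$. For (4), multiplying the $i$-th row of $A$ by an integer coprime to $d_i$ amounts to replacing a generator of $\mu_{d_i}$ by another generator. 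For (5) (read as $a_{i,j}\mapsto a_{i,j}+kd_i$, as dictated by the remark following~\eqref{action_XdA}), the action is literally unchanged since $\xi^{a_{i,j}+kd_i}=\xi^{a_{i,j}}$ for $\xi\in\mu_{d_i}$. For (7), $\mu_{d_r}=\mu_1$ is the trivial group, hence contributes nothing to the action.

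The delicate case is (6). I would isolate the unique subgroup $\mu_e\subset\mu_{d_1}\subset\mu_\bd$ of order $e$ (which exists because $e\mid d_1$): the divisibility hypotheses $e\mid a_{1,j}$ for $j<n$ imply that $\mu_e$ acts trivially on the first $n-1$ coordinates, while $\gcd(e,a_{1,n})=1$ makes it act by the full group of $e$-th roots of unity on $x_n$. Consequently the finite map $\phi(x_1,\ldots,x_n)=(x_1,\ldots,x_{n-1},x_n^e)$ identifies $\C^n/\mu_e$ with $\C^n$. Next I would push forward the residual action of $\mu_\bd/\mu_e\cong\mu_{d_1/e}\times\mu_{d_2}\times\cdots\times\mu_{d_r}$ along $\phi$: using the isomorphism $\zeta\mu_e\mapsto\zeta^e$, the first row of the new weight matrix becomes $(a_{1,1}/e,\ldots,a_{1,n-1}/e,\,a_{1,n})$, while for $i>1$ the $i$-th row becomes $(a_{i,1},\ldots,a_{i,n-1},\,ea_{i,n})$, because $y_n=x_n^e$ is the new last coordinate. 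Applying operation (3) to multiply the new first row by $e$ then restores $d_1$ in the first slot and produces precisely the matrix claimed in (6), in which every entry of the $n$-th column has been multiplied by $e$. The main obstacle is this coordinate bookkeeping; everything else is essentially formal.
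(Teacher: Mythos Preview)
Your argument is correct. The paper itself does not supply a proof of this lemma: it is stated as one of the preliminary facts imported from the companion article~\cite{kjj-qcw} (see the opening line of the section, ``We sketch some definitions and properties, see~\cite{kjj-qcw} for a more detailed exposition''), and the text moves directly from the statement to its application in Lemma~\ref{upper_triangular}. So there is nothing to compare against in this paper.

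Your treatment of (1)--(5) and (7) is exactly the expected one, and your observation that (5) should read $a_{i,j}\mapsto a_{i,j}+kd_i$ (not $kd_j$) is consistent with the sentence following~\eqref{action_XdA}. For (6), your two-step argument---first quotienting by the distinguished $\mu_e\subset\mu_{d_1}$ via $x_n\mapsto x_n^e$, then reindexing the residual $\mu_{d_1/e}$-action through $\zeta\mu_e\mapsto\zeta^e$, and finally invoking (3) to restore $d_1$---is the natural and correct verification; the bookkeeping you carry out matches the claimed transformation of the weight matrix.
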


Using Lemma~\ref{movs-dA}
we can prove the following lemma which restricts
the number of possible factors of the abelian group
in terms of the dimension.

\begin{lemma}\label{upper_triangular}
The space $X(\bd;A) = \C^n / \mu_d$ can always be represented by an upper
triangular matrix of dimension $(n-1) \times n$. More precisely, there exist a
vector ${\bf e} = (e_1,\ldots, e_{n-1})$, a~matrix $B = (b_{i,j})_{i,j}$, and an
isomorphism $[(x_1,\ldots,x_n)] \mapsto [(x_1,\ldots, x_n^{k})]$ for some
$k\in\N$ such that 
$$
X({\bf d}; A) \cong \left(\begin{array}{c|cccc}
e_1 & b_{1,1} & \cdots & b_{1,n-1} & b_{1,n} \\
\vdots & \vdots & \ddots & \vdots & \vdots\\
e_{n-1} & 0 & \cdots & b_{n-1,n-1} & b_{n-1,n}
\end{array}\right) = X({\bf e};B). 
$$
\end{lemma}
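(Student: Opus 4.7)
The plan is to proceed by induction on the dimension $n$. The base case $n=1$ is immediate from Example~\ref{Ex_quo_dim1}: every $X(\bd;A)$ with $n=1$ is isomorphic to $\C$, which is represented by the empty $0\times 1$ matrix and is vacuously upper triangular.

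For the inductive step, my strategy is to carry out a Hermite-normal-form-type reduction on the first column using the moves of Lemma~\ref{movs-dA}. First I would use moves (2)--(5), together with move (6) applied to the first column via the column permutation move (1), to transform $(\bd;A)$ so that the first column has a single nonzero entry, which is then placed in row~$1$ by a permutation. The guiding principle is that the action of $\mu_{\bd}$ on $x_1$ is a single character $\mu_{\bd}\to\C^*$ with cyclic image; by choosing generators of $\mu_{\bd}$ adapted to this character, one concentrates the action on $x_1$ into a single row, while the remaining rows represent generators lying in the kernel of this character (hence fixing $x_1$). Concretely, one uses move (3) to equalize the moduli of a pair of rows, performs the combination via moves (4) and (5), and clears any residual entries using move (6) as a variable change on $x_1$, followed by move (7) to discard rows whose modulus has collapsed to $1$.

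After this reduction, rows $2,\dots,r$ all vanish in the first column and so represent a finite abelian action on the subspace $\{x_1=0\}\cong\C^{n-1}$. Applying the induction hypothesis to the corresponding $(r-1)\times(n-1)$ submatrix yields an upper triangular presentation with at most $n-2$ rows, possibly after a variable change $x_n\mapsto x_n^k$ (the last coordinate of the subproblem coincides with $x_n$ of the original problem, so this change is of the form required in the statement). Reassembling with row~$1$ on top produces at most $1+(n-2)=n-1$ rows in the desired upper triangular shape; if fewer rows appear, we pad with rows of the form $(1\mid 0,\dots,0)$, which are vacuous since $\mu_1$ is trivial.

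The main obstacle is formalizing the first-column reduction, since the rows carry distinct moduli $d_i$ and so classical integer row operations over $\Z$ are not directly available. The clean resolution is to argue at the level of the effective image group $G\subset(\C^*)^n$ rather than at the level of the matrix: by Prill's theorem (Theorem~\ref{th_Prill}) the quotient $X(\bd;A)$ depends only on $G$ up to conjugation, so it suffices to exhibit, for each group-theoretic ``change of generators'' of $\mu_{\bd}$, a sequence of moves from Lemma~\ref{movs-dA} realizing it. The fact that $n-1$ rows suffice (rather than $n$) reflects that $G$ embeds in $(\C^*)^n$ and its last cyclic factor can always be absorbed into the cover $x_n\mapsto x_n^k$; this is precisely what the use of move (6) encodes during the reduction.
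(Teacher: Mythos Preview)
The paper does not give an explicit proof of this lemma; it simply asserts that it follows from the moves in Lemma~\ref{movs-dA}, and the $n=2$ case is worked out in detail in Remark~\ref{rem-simpli}. Your inductive strategy---clear the first column, then recurse on the remaining $(n-1)$ columns---is exactly the natural approach and is what Remark~\ref{rem-simpli} does for two variables.

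There is, however, one concrete problem with your execution. You propose to use move~(6) on the first column ``via the column permutation move~(1)'' in order to clear it. But if you swap column~$1$ into the last position, apply move~(6), and swap back, the resulting coordinate change is $x_1\mapsto x_1^{e}$, not $x_n\mapsto x_n^{e}$. Combined with the $x_n\mapsto x_n^{k}$ coming from the inductive step, the composite isomorphism would have the shape $[(x_1,\dots,x_n)]\mapsto[(x_1^{e},x_2,\dots,x_{n-1},x_n^{k})]$, which is \emph{not} the form asserted in the lemma.

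The fix is that move~(6) is simply not needed for the first-column reduction. Look at how Remark~\ref{rem-simpli} handles it: after equalizing the moduli with move~(3), one adds a new row $\alpha\cdot(\text{row }1)+\beta\cdot(\text{row }2)$ realizing the $\gcd$ in the first column and then subtracts suitable multiples of it from the old rows to create zeros. These are pure changes of generators of the acting group (the image $G\subset(\C^{*})^{n}$ is unchanged), so the identity on coordinates gives the isomorphism. Iterating this B\'ezout step over all rows leaves a single nonzero entry in column~$1$ with no coordinate change whatsoever. Your group-theoretic paragraph already contains the right picture: choose one generator surjecting onto the image of $G$ in the first $\C^{*}$-factor, and take the remaining generators in the kernel.

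Move~(6) should be invoked exactly once, at the bottom of the induction, to absorb the final row $(e_n\mid 0,\dots,0,b_{n,n})$ into the substitution $x_n\mapsto x_n^{k}$; this is the step that reduces the row count from $n$ to $n-1$, and it is the only place a coordinate change is required.
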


\begin{remark}
For $n=2$ it is enough to consider cyclic quotients. Nevertheless, in order
to avoid cumbersome statements, we will allow if necessary quotients
of non-cyclic groups.
\end{remark}

As we have already used, if an action is not free on $(\C^{*})^n$
we can factor the group by the kernel of the action and
the isomorphism type does not change.
With all these hypotheses we can define normalized types.

\begin{defi}\label{def_normalized_XdA}
The type $(\bd;A)$ is said to be {\em normalized} if the action is free on $(\C^{*})^n$
and~$\mu_\bd$ is small as subgroup of $GL(n,\C)$.
By abuse of language we often say the space $X(\bd;A)$ is written in a normalized
form when we mean the type $(\bd;A)$ is normalized.
\end{defi}

\begin{prop}
The space $X(\bd;A)$ is written in a normalized form if and only if the stabilizer
subgroup of $P$ is trivial for all~$P \in \C^n$ with exactly $n-1$ coordinates
different from zero.

In the cyclic case the stabilizer of a point as above (with exactly $\,n-1$
coordinates different from zero) has order $\gcd(d, a_1, \ldots, \widehat{a}_i,
\ldots, a_n)$.
\end{prop}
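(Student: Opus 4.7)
The plan is to reduce everything to the explicit description of the stabilizer of a point $P$ with exactly $n-1$ nonzero coordinates. Say $P=(p_1,\dots,p_n)$ with $p_i=0$ and $p_j\neq 0$ for $j\neq i$. Since the action in \eqref{action_XdA} is diagonal, the condition $\bxi_\bd\cdot P=P$ becomes $\bxi_\bd^{\ba_j}p_j=p_j$ for $j\neq i$, which (using $p_j\neq0$) amounts to
$$
\mathrm{Stab}(P)=\{\bxi_\bd\in\mu_\bd : \bxi_\bd^{\ba_j}=1 \text{ for every } j\neq i\}.
$$
Equivalently, $\bxi_\bd\in\mathrm{Stab}(P)$ if and only if $\bxi_\bd$ acts on $\C^n$ as the diagonal matrix $\diag(1,\dots,1,\bxi_\bd^{\ba_i},1,\dots,1)$, with the (possibly trivial) factor $\bxi_\bd^{\ba_i}$ in the $i$-th position.

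For the equivalence in the first statement, I would argue both directions from this description. Assume $(\bd;A)$ is normalized and let $\bxi_\bd\in\mathrm{Stab}(P)$. If $\bxi_\bd^{\ba_i}=1$ as well, then $\bxi_\bd$ acts as the identity on $\C^n$, contradicting freeness on $(\C^*)^n$ unless $\bxi_\bd=1$. Otherwise $\bxi_\bd$ acts as a diagonal matrix with exactly $n-1$ eigenvalues equal to $1$, i.e.\ a pseudo-reflection fixing the coordinate hyperplane $\{x_i=0\}$, contradicting smallness. Conversely, if every such stabilizer is trivial, then freeness on $(\C^*)^n$ is immediate, and smallness follows from the fact that any non-identity element of a diagonal group which is a pseudo-reflection must fix some coordinate hyperplane $\{x_i=0\}$, hence belong to the stabilizer of any $P$ with $p_i=0$ and $p_j\neq 0$ for $j\neq i$.

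For the cyclic case with $r=1$, I would compute $|\mathrm{Stab}(P)|$ directly. Writing $\mu_d=\{\zeta^k : 0\le k<d\}$ with $\zeta$ a primitive $d$-th root of unity, we have $\zeta^{k a_j}=1$ iff $\frac{d}{\gcd(d,a_j)}\mid k$. Hence
$$
|\mathrm{Stab}(P)|=\frac{d}{\mathrm{lcm}_{j\neq i}\bigl(d/\gcd(d,a_j)\bigr)}.
$$
The remaining step is the number-theoretic identity
$$
\mathrm{lcm}_{j\neq i}\!\left(\frac{d}{\gcd(d,a_j)}\right)=\frac{d}{\gcd(d,a_1,\dots,\widehat{a}_i,\dots,a_n)},
$$
which follows from the standard formula $\gcd(d/b_1,\dots,d/b_m)=d/\mathrm{lcm}(b_1,\dots,b_m)$ applied to the divisors $b_j=\gcd(d,a_j)$ of $d$, together with $\gcd_j\gcd(d,a_j)=\gcd(d,a_1,\dots,\widehat{a}_i,\dots,a_n)$. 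This yields $|\mathrm{Stab}(P)|=\gcd(d,a_1,\dots,\widehat{a}_i,\dots,a_n)$, as claimed.

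The only mildly delicate step is the observation that, because the group acts diagonally, pseudo-reflections necessarily fix coordinate hyperplanes, which is what glues the smallness condition to the hypothesis about stabilizers on codimension-one coordinate strata; apart from that, the proof reduces to bookkeeping and the elementary $\gcd$--$\mathrm{lcm}$ identity above.
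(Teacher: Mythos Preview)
The paper states this proposition without proof, so there is nothing to compare against. Your argument is correct: the description of $\mathrm{Stab}(P)$ for $P$ on a coordinate hyperplane is immediate from the diagonal form of the action, the equivalence with normalizedness follows exactly as you say (the key point being that a diagonal pseudo-reflection must fix a coordinate hyperplane), and in the cyclic case your $\gcd$--$\mathrm{lcm}$ computation is the standard one and yields the stated order.
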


Using Lemma~\ref{movs-dA} it is possible
to convert general types~$(\bd;A)$ into their normalized form.
Theorem~\ref{th_Prill} allows one to decide whether two quotient spaces are
isomorphic. In particular, one can use this result to compute the singular points
of the space $X(\bd;A)$. If $n=2$, then a normalized type is always cyclic.

\begin{defi}\label{defiindex}
The \emph{index} of a quotient $X(\bd;A)$ of $\C^2$ equals $d$
for $X(\bd;A)\cong X(d;a,b)$ normalized. 
\end{defi}

In Example~\ref{Ex_quo_dim1} we have explained the previous normalization process in dimension one.
The two and three-dimensional cases are treated in the following examples. 


\begin{ex}\label{X2} Following Lemma~\ref{upper_triangular}, all
quotient spaces for $n=2$ are cyclic. The space $X(d;a,b)$ is written in a
normalized form if and only if $\gcd(d,a) = \gcd(d,b) = 1$. If this is not the
case, one uses the isomorphism\footnote{The notation $(i_1,\ldots,i_k) =
\gcd(i_1,\ldots,i_k)$ is used in case of complicated or long formulas.} (assuming
$\gcd(d,a,b)=1$)
$$
\begin{array}{rcl}
X(d; a,b)  & \longrightarrow & X \left( \frac{d}{(d,a)(d,b)}; \frac{a}{(d,a)},
\frac{b}{(d,b)} \right), \\[0.3cm]
\big[ (x,y) \big] & \mapsto & \big[ (x^{(d,b)},y^{(d,a)}) \big]
\end{array}
$$
to convert it into a normalized one, see also Lemma~\ref{movs-dA}.
\end{ex}

\begin{ex}\label{Ex_dim3} The quotient space $X(d;a,b,c)$ is
written in a normalized form if and only if $\gcd(d,a,b) = \gcd(d,a,c) =
\gcd(d,b,c) = 1$. As above, isomorphisms of the form $[(x,y,z)] \mapsto
[(x,y,z^k)]$ can be used to convert types $(d;a,b,c)$ into their normalized
form.
\end{ex}

\begin{remark}\label{rem-simpli}
Let us show how to convert a space of type
$
\left(
\begin{smallmatrix}
p\\
q
\end{smallmatrix}
\right|
\left.
\begin{smallmatrix}
a&b\\    
c&d                                                      
\end{smallmatrix}
\right)
$ into its cyclic form.
By suitable multiplications of the rows, we can assume $p=q=r$:
$X
\left(
\begin{smallmatrix}
r\\
r
\end{smallmatrix}
\right|
\left.
\begin{smallmatrix}
 a&  b\\    
 c&  d                                                      
\end{smallmatrix}
\right).
$
For the second step we add a third row by adding the first row multiplied
by $\alpha$ and the second row multiplied by $\beta$, where $\alpha  a +\beta  c=m$
and $m:=\gcd( a, c)$ (note that $\gcd(\alpha,\beta)=1$):
$$
X
\left(
\begin{matrix}
r\\
r\\
r
\end{matrix}
\right|
\left.
\begin{matrix}
a&  b\\    
c&  d \\
m&\alpha b +\beta d                                                     
\end{matrix}
\right)=
X
\left(
\begin{matrix}
r\\
r\\
r
\end{matrix}
\right|
\left.
\begin{matrix}
0& -\beta\frac{a d -b c}{m}\\    
0&  \alpha\frac{a d -b c}{m} \\
m&\alpha b +\beta d                                                     
\end{matrix}
\right)=
X
\left(
\begin{matrix}
r\\
r
\end{matrix}
\right|
\left.
\begin{matrix}
0&\frac{a d -b c}{m} \\
m&\alpha b +\beta  d                                                     
\end{matrix}
\right).
$$
Let $t:=\gcd(r,\frac{a d -b c}{m})$. Then, our space
is of type $(r;m, (\alpha  b +\beta  d) \frac{r}{t})$ and normalization
follows by taking $\gcd$'s. The isomorphism is
$[(x,y)]\mapsto[(x,y^{\frac{r}{t}})]_{(r;m, (\alpha  b +\beta  d) \frac{r}{t})}$.
\end{remark}

In~\cite{Fujiki74} the author computes resolutions of cyclic quotient
singularities. In the $2$-dimensional case, the resolution process is due to Jung and Hirzebruch,
see~\cite{hnk}.

\section{Weighted Projective Spaces}\label{weighted_projective_space}


The main reference that has been used in this section is \cite{Dolgachev82}.
Here we concentrate our attention on the analytic structure.

Let $\w:=(q_0,\ldots,q_n)$ be a weight vector, that is, a finite set of coprime positive
integers. 
There is a natural action of the multiplicative group $\C^{*}$ on
$\C^{n+1}\setminus\{0\}$ given by
$$
  (x_0,\ldots,x_n) \longmapsto (t^{q_0} x_0,\ldots,t^{q_n} x_n).
$$

The set of orbits $\frac{\C^{n+1}\setminus\{0\}}{\C^{*}}$ 
under this action is denoted by $\bP^n_\w$ (or $\bP^n(\w)$ in case of complicated weight vectors) 
and it is called the {\em weighted projective space} of type $\w$. 
The class of a nonzero element $(x_0,\ldots,x_n)\in \C^{n+1}$ 
is denoted by $[x_0:\ldots:x_n]_\w$ and the weight vector is omitted if no ambiguity seems likely to arise.
When $(q_0,\ldots,q_n)=(1,\ldots,1)$ one obtains the usual projective space
and the weight vector is always omitted. For $\mathbf{x}\in\C^{n+1}\setminus\{0\}$,
the closure of $[\mathbf{x}]_\w$ in $\C^{n+1}$ is obtained by adding the origin and it
is an algebraic curve.

\begin{nothing}\label{analytic_struc_Pkw}\textbf{Analytic structure.} Consider the decomposition
$
\bP^n_\w = U_0 \cup \cdots \cup U_n,
$
where $U_i$ is the open set consisting of all elements $[x_0:\ldots:x_n]_\w$
with $x_i\neq 0$. The map
$$
  \widetilde{\psi}_0: \C^n \longrightarrow U_0,\quad
\widetilde{\psi}_0(x_1,\cdots,x_n):= [1:x_1:\ldots:x_n]_\w
$$
defines an isomorphism $\psi_0$ if we replace $\C^n$ by
$X(q_0;\, q_1,\ldots,q_n)$.
Analogously, $X(q_i;\,q_0,\ldots,\widehat{q}_i,\ldots,q_n) \cong U_i$
under the obvious analytic map.
\end{nothing}

\begin{prop}[\cite{kjj-qcw}]\label{propPw}
Let $d_i := \gcd (q_0,\ldots,\widehat{q}_i,\ldots,q_n)$,
 $e_i:= d_0\cdot\ldots\cdot\widehat{d}_i\cdot\ldots\cdot d_n$
and $p_i:=\frac{q_i}{e_i}$. The following map is an isomorphism:
$$
\begin{array}{rcl}
\bP^n \big(q_0,\ldots,q_n\big) & \longrightarrow & \bP^n(p_0,\dots,p_n)
\\
\,[x_0:\ldots:x_n] & \mapsto &
\big[\,x_0^{d_0}:\ldots:x_n^{d_n}\,\big].
\end{array}
$$
\end{prop}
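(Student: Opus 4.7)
The plan is to lift the candidate map to the affine cones, check its $\C^{*}$-equivariance, verify bijectivity with the help of the Chinese Remainder Theorem, and finally upgrade the bijection to an analytic isomorphism using the affine charts of~\ref{analytic_struc_Pkw}. First I verify the arithmetic. From $\gcd(q_0,\ldots,q_n)=1$ it follows that the integers $d_0,\ldots,d_n$ are pairwise coprime: a common prime $r$ of $d_i$ and $d_j$ (with $i\neq j$) would divide every $q_m$, either by the definition of $d_i$ when $m\neq i$ or by that of $d_j$ when $m=i$. For each $i$ and each $j\neq i$, $d_j$ divides $q_i$; pairwise coprimality then gives that $e_i$ divides $q_i$, so $p_i\in\Z_{>0}$. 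The same type of argument shows $\gcd(p_0,\ldots,p_n)=1$, so $(p_0,\ldots,p_n)$ is a genuine weight vector.

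Next I lift the map to the cone by $\widetilde\phi(x_0,\ldots,x_n):=(x_0^{d_0},\ldots,x_n^{d_n})$. Writing $N:=d_0\cdots d_n$ one has $q_id_i=Np_i$, whence
$$
\widetilde\phi(t^{q_0}x_0,\ldots,t^{q_n}x_n)=(t^{Np_0}x_0^{d_0},\ldots,t^{Np_n}x_n^{d_n})=t^{N}\cdot_{\mathbf{p}}\widetilde\phi(x).
$$
So $\widetilde\phi$ descends to a well-defined holomorphic map $\phi\colon\bP^n_{\w}\to\bP^n_{\mathbf{p}}$. Surjectivity is immediate: given $[y]_{\mathbf{p}}$, pick any $d_i$-th root $x_i$ of $y_i$, and $\phi([x])=[y]$.

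For injectivity, suppose $\phi([x])=\phi([x'])$, so $(x_i')^{d_i}=s^{p_i}x_i^{d_i}$ for some $s\in\C^{*}$. Choose $t\in\C^{*}$ with $t^N=s$; then $(x_i')^{d_i}=(t^{q_i}x_i)^{d_i}$, whence $x_i'=\zeta_i t^{q_i}x_i$ for some $\zeta_i\in\mu_{d_i}$ (with $x_i=0\Leftrightarrow x_i'=0$ because $s\neq 0$). It will then suffice to find $\eta\in\C^{*}$ with $\eta^{q_i}=\zeta_i$ for every $i$, for then $x_i'=(t\eta)^{q_i}x_i$ yields $[x']=[x]$ in $\bP^n_{\w}$. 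Fix a primitive $N$-th root of unity $\zeta$: then $\zeta^{e_i}$ generates $\mu_{d_i}$, so $\zeta_i=\zeta^{e_ia_i}$ for some $a_i\in\Z/d_i\Z$. Seeking $\eta=\zeta^b$ and using $q_i=p_ie_i$ and $N=e_id_i$, the system $\eta^{q_i}=\zeta_i$ collapses to the congruences $p_ib\equiv a_i\pmod{d_i}$ for $i=0,\ldots,n$. Any common prime of $p_i$ and $d_i$ would divide every $q_j$, so $\gcd(p_i,d_i)=1$ and each congruence is individually solvable; pairwise coprimality of the $d_i$ then lets the Chinese Remainder Theorem combine them into a single $b$.

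Finally, to upgrade the bijection $\phi$ to a biholomorphism, I restrict to the affine charts of~\ref{analytic_struc_Pkw}: $\phi$ sends $U_i$ onto $V_i$, both charts are (via the stated analytic isomorphisms) normal complex analytic spaces of dimension $n$, and $\phi|_{U_i}$ is a finite holomorphic map, so being bijective makes it a biholomorphism. The chief obstacle in this plan is the CRT step, where one must identify the correct arithmetic ($\gcd(p_i,d_i)=1$ together with the pairwise coprimality of the $d_i$) so that the simultaneous power system is solvable; everything else is bookkeeping.
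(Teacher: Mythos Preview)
The paper does not actually prove this proposition; it is quoted from~\cite{kjj-qcw} and stated without argument, so there is no ``paper's own proof'' to compare against. Your proof is self-contained and correct. The arithmetic verifications (pairwise coprimality of the $d_i$, divisibility $e_i\mid q_i$, and $\gcd(p_i,d_i)=1$) are all sound, the equivariance check $q_id_i=Np_i$ is exactly what is needed for $\phi$ to descend, and the CRT step for injectivity is the crux and is handled cleanly. The only place worth a word of extra care is the final upgrade from bijection to biholomorphism: you invoke that a finite bijective holomorphic map between normal complex spaces is an isomorphism, which is correct (quotient singularities are normal, and $\phi$ is visibly finite since it is induced by a finite map of affine cones), but you might state explicitly that $V$-manifolds are normal so the reader sees why the hypothesis applies.
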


\begin{remark}
Note that, due to the preceding proposition, one can always assume the weight
vector satisfies $\gcd(q_0,\ldots,\widehat{q}_i,\ldots,q_n)=1$, for
$i=0,\ldots,n$. In particular, $\bP^1{(q_0,q_1)} \cong \bP^1$ and for $n=2$ we can
take $(q_0,q_1,q_2)$ relatively prime numbers. In higher dimension the situation
is a bit more complicated.
\end{remark}

\section{Abstract and Embedded $\Q$-Resolutions}

Classically an embedded resolution of $\{f=0\} \subset \C^n$ is a proper map
$\pi: X \to (\C^n,0)$ from a smooth variety $X$ satisfying, among other
conditions, that $\pi^{-1}(\{f=0\})$ is a normal crossing divisor. To weaken the
condition on the preimage of the singularity we allow the new ambient space $X$
to contain abelian quotient singularities and the divisor $\pi^{-1}(\{f=0\})$ to
have \emph{normal crossings} over this kind of varieties. This notion of normal
crossing divisor on $V$-manifolds was first introduced by Steenbrink
in~\cite{Steenbrink77}.

\begin{defi}
Let $X$ be a $V$-manifold with abelian quotient singularities. A hypersurface
$D$ on $X$ is said to be with {\em $\mathbb{Q}$-normal crossings} if it is
locally isomorphic to the quotient of a union of coordinate hyperplanes under a group
action of type $(\bd;A)$. That is, given $x \in X$, there is an isomorphism of
germs $(X,x) \simeq (X(\bd;A), [0])$ such that $(D,x) \subset (X,x)$ is identified
under this morphism with a germ of the form
$$
\big( \{ [\mathbf{x}] \in X(\bd;A) \mid x_1^{m_1} \cdot\ldots\cdot x_k^{m_k} = 0 \},
[(0,\ldots,0)] \big).
$$
\end{defi}

Let $M = \C^{n+1} / \mu_\bd$ be an abelian quotient space not necessarily cyclic
or written in normalized form.
Consider $H \subset M$ an analytic subvariety of codimension one.

\begin{defi}\label{Qresolution}
An {\em embedded $\Q$-resolution} of $(H,0) \subset (M,0)$ is a proper analytic
map~$\pi: X \to (M,0)$ such that:
\begin{enumerate}
\item $X$ is a $V$-manifold with abelian quotient singularities.
\item $\pi$ is an isomorphism over $X\setminus \pi^{-1}(\Sing(H))$.
\item $\pi^{-1}(H)$ is a hypersurface with $\mathbb{Q}$-normal crossings on $X$.
\end{enumerate}
\end{defi}

\begin{remark}
Let $f:(M,0) \to (\C,0)$ be a non-constant analytic function germ. Consider
$(H,0)$ the hypersurface defined by $f$ on $(M,0)$. Let $\pi: X \to (M,0)$ be an
embedded $\Q$-resolution of $(H,0) \subset (M,0)$. Then $\pi^{-1}(H) = (f\circ
\pi)^{-1}(0)$ is locally given by a function of the form
$
x_1^{m_1}  \cdot\ldots\cdot x_k^{m_k} : X(\bd;A) \rightarrow \C.
$ 
\end{remark}

In the same way we define abstract $\Q$-resolutions.

\begin{defi}
Let $(X,0)$ be a germ of singular point. An \emph{abstract good $\Q$-resolution}
is a proper birational morphism $\pi:\hat{X}\to(X,0)$ such that
$\hat{X}$ is a $V$-manifold with abelian quotient singularities,
$\pi$ is an isomorphism outside $\Sing(X)$, and
$\pi^{-1}(\Sing(X))$ is a $\mathbb{Q}$-normal crossing divisor.
\end{defi}

\begin{notation}\label{not-grafodual}
It is usual to encode normal crossing divisors by its \emph{dual complex}:
one vertex for each irreducible component, one edge for each intersection
of two irreducible components, one triangle for intersection of three
irreducible components and so on. It is particularly useful for normal
crossing divisors in surfaces where one deals with (weighted) graphs.

We explain how to encode $\mathbb{Q}$-normal crossings with a weighted
graph in the case of surfaces. We are interested in two cases:
the divisor $\pi^{*}(H)=(f\circ\pi)^*(0)$ for an embedded $\Q$-resolution
of a curve $H=f^*(0)$ and the exceptional divisor of
an abstract good $\Q$-resolution~$\pi$ 
of a normal surface. We associate to such divisors a weighted
graph $\Gamma$ as follows:
\begin{itemize}
\item The set $V_\Gamma$ of vertices of $\Gamma$ is the ordered set of irreducible components of $\pi^{*}(H)$
(for some arbitrary order).
It is decomposed in two subsets $V_\Gamma=V_\Gamma^0\coprod V_\Gamma^H$; the first subset corresponds
to exceptional components and the second to strict transforms (using arrow-ends). The set
$V_\Gamma^H$ is empty when the divisor is compact (e.g. when
$\pi$ is an abstract good $\Q$-resolution).

\item The set $E_\Gamma$ of edges of $\Gamma$ is in bijection with the double points of $\pi^{*}(H)$.

\item Each $E\in V_\Gamma^0$ is weighted by its genus $g_E$
(usually omitted if $g_E=0$. It is also weighted by
its self-intersection~$e_E\in\mathbb{Q}$, see Definition~\ref{definition_intersection_number} later.

\item When $\pi$ is an embedded $\Q$-resolution, each
$E\in V_\Gamma$ is weighted by $m_E$.
The multiplicity $m_E$ is defined as follows: given a generic point in $E$ one can choose
local analytic coordinates $(x_E,y_E)$ centered at this point such that
$y_E=0$ is a local equation of $E$ and $(f\circ\pi)(x_E,y_E)=y_E^{m_E}$.

\item For $E\in V_\Gamma$, let $\Sing^0(E)$ be the set of singular points of
$\hat{X}$ in $E$ which are not double points. Then, we associate
to $E$ the sequenceof normalized types $\{(d_P; a_P,b_P)\}_{P\in\Sing^0(E)}$,
where for $P\in\Sing^0(E)$, $E$ is the image
of $y=0$. Note that $d_P$ divides $m_E$.

\item If the double point $P_\gamma=E_1\cap E_2$, $E_1<E_2$, 
associated with $\gamma\in E_\Gamma$ is singular, we associate
to it a normalized type $(d; a,b)$, where $E_1$ is the image
of $x=0$ and $E_2$ is the image of $y=0$. Note that $d$ divides
$a m_{E_1}+b m_{E_2}$.
\end{itemize}
This notation is also useful for exceptional graphs of good $\Q$-resolutions.
\end{notation}

\section{Weighted Blow-ups}

Weighted blow-ups can be defined in any dimension, see~\cite{kjj-qcw}. In this
section, we restrict our attention to the case~$n=2$.

\begin{nothing}\label{blowup_dimN_smooth}\textbf{Classical blow-up of $\C^{2}$.}
We consider
$$
\widehat{\C}^{2} := \big\{ ((x,y),[u:v]) \in \C^{2} \times \bP^1 \mid
(x,y)\in \overline{[u:v]} \big\}.
$$
Then $\pi : \widehat{\C}^{2} \to \C^{2}$ is an isomorphism over
$\widehat{\C}^{2} \setminus \pi^{-1}(0)$. The {\em exceptional divisor} $E:=
\pi^{-1}(0)$ is identified with $\bP^{1}$. The space
$\widehat{\C}^{2} = U_0 \cup U_1$ can be covered with $2$
charts each of them isomorphic to $\C^2$. For instance, the following map
defines an isomorphism:
\begin{eqnarray*}
\C^{2} & \longrightarrow & U_0 = \{ u \neq 0 \} \ \subset \
\widehat{\C}^{2},\\
(x,y) \ & \mapsto & \big( (x, x y),[1: y] \big).
\end{eqnarray*}
\end{nothing}

\begin{nothing}\label{weighted_blowup_dimN_smooth}\textbf{Weighted
$(p,q)$-blow-up of $\C^{2}$.} 
Let~$\w = (p_0,p_1)$ be a weight
vector with coprime entries. As above, consider the space
$$
\widehat{\C}^{2}(\w) := \big\{ ((x,y),[u:v]_{\w}) \in \C^{2} \times
\bP^1_{\w} \mid (x,y) \in \overline{[u:v]}_{\w}
\big\}.
$$
covered~as
$
\widehat{\C}^2_{\w} = U_1 \cup U_2 = X(p;-1,q) \cup X(q;\,p,-1)
$
and the charts are given by
$$
\begin{array}{c|rcl}
\text{First chart} & X(p;-1,q) & \longrightarrow & U_1, \\[0.10cm]
& \,[(x,y)] & \mapsto & ((x^p,x^q y),[1:y]_{\w}). \\ \multicolumn{2}{c}{} \\
\text{Second chart} & X(q;\,p,-1) & \longrightarrow & U_2, \\[0.10cm]
& \,[(x,y)] & \mapsto & ((x y^p, y^q),[x:1]_{\w}).
\end{array}
$$
The exceptional divisor $E:=\pi_{\w}^{-1}(0)$ is isomorphic to $\bP^1_{\w}$ which
is in turn isomorphic to~$\bP^1$ under the map
$
[x:y]_{\w} \longmapsto [x^q:y^p]$.
The singular points of $\widehat{\C}^2_{\w}$ are cyclic quotient singularities
located at the exceptional divisor of indices $p$ and $q$. They actually coincide with the origins of
the two charts and are written in their normalized form.
\end{nothing}

Let us study now the weighted blow-ups of quotient spaces. The general computations were made
in~\cite{kjj-qcw} and we specialize here for dimension~$2$. We study 
the $(p,q)$-blow-up of $X(d;a,b)$ (in normalized
form) and for simplicity we start with the case $p=a$, $q=b$.

\begin{nothing}\label{blow-up2-sing-pq}
{\bf Blow-up of $X(d;p,q)$ with respect to $\w:=(p,q)$.} Let $X:=X(d; p,q)$ 
in a normalized form, i.e.~$\gcd(d,p) = \gcd(d,q) = 1$. Denote
$\pi := \pi_{\w,d}: \widehat{\C}^2_{\w,d} \to X(d;p,q)$ the weighted blow-up
at the origin with respect to $\w=(p,q)$. Following \cite{kjj-qcw},
we cover $\widehat{X(d;p,q)}_{\w,d}$ by two charts. The first
one is of type

$$X \left(\begin{array}{c|cc}
   p &-1 &q \\
   pd &p & pq-qp\\
  \end{array}\right)=
X \left(\begin{array}{c|cc}
   p &-1 &q \\
   d &1 & 0\\
  \end{array}\right) \stackrel{x\mapsto x^d}{\cong} X(p;-d,q).
$$

Hence 
$
\widehat{\C}^2_{\w,d} = U_1 \cup U_2 = X(p;-d,q) \cup X(q;p,-d)
$
and the charts are given by
\begin{equation}\label{eq-cartaspq}
\begin{array}{c|rcl}
\text{First chart} & X(p;-d,q) & \longrightarrow & U_1, \\[0.10cm]
& \,\big[ (x^d,y) \big] & \mapsto & \big( [(x^p,x^q y)]_{d},[1:y]_{\w} \big). \\
\multicolumn{2}{c}{} \\
\text{Second chart} & X(q;\,p,-d) & \longrightarrow & U_2, \\[0.10cm]
& \big[ (x,y^d) \big] & \mapsto & \big( [(x y^p, y^q)]_{d},[x:1]_{\w} \big).
\end{array}
\end{equation}
As above the exceptional divisor $E:=\pi_{\w}^{-1}(0)$ is identified with
$\bP^1_{\w}$ which is isomorphic to $\bP^1$ under the map
$
[x:y]_{\w} \longmapsto [x^q:y^p]
$.
The singular points of $\widehat{\C}^2_{\w,d}$ are cyclic quotient singularities,
they coincide with the origins of the two charts and are written in their
normalized form.
\end{nothing}

\begin{figure}
\centering
\subfigure[Blow-up]{\includegraphics{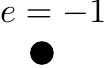}}\qquad\qquad
\subfigure[$\w$-blow-up]{\includegraphics{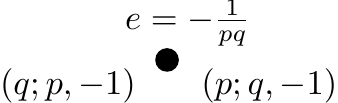}}\qquad
\subfigure[$\w$-blow-up of $X(d;p,q)$]{\includegraphics{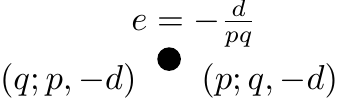}}%
\caption{Graphs of the blow-ups.}
\label{fig-blowup1}
\end{figure}

\begin{defi}\label{def-multi}
Let $\pi: \widehat{X(\bd;A)}(\w) \to X(\bd;A)$ be the $\w$-blow-up. Then the {\em total transform}
$\pi^{*}(H)$, $H:=f^{-1}(0)$, for some $f:X(\bd;A)\to\C$ holomorphic, decomposes as
$$
  \pi^{*}(H) = \widehat{H} + m E,
$$
where $E := \pi^{-1}(0)$ is the {\em exceptional divisor} of $\pi$, $\widehat{H}
:= \overline{\pi^{-1}(H\setminus L)}$ is the {\em strict transform} of $H$, and
$m$ is the {\em multiplicity} of $E$ at a smooth point.
\end{defi}

\begin{remark}\label{rem-multi}
In order to compute multiplicities when looking at \emph{multicharts} (for quotient spaces)
we must be careful with the expressions in coordinates in case the space is represented by a non-normalized type. For instance, if a divisor is locally given by the function
$
x^{md}:X\left(
\begin{smallmatrix}
p\\
d
\end{smallmatrix}
\right|
\left.
\begin{smallmatrix}
-1&q\\    
1&0                                                      
\end{smallmatrix}
\right)\rightarrow \C
$, its multiplicity is $m$.
\end{remark}

\begin{ex}\label{example_2}
Assume $\gcd(p,q)=1$ and $p<q$. Let $f = (x^p + y^q)(x^q + y^p)$ and consider
${\bf C}_1 = \{ x^p + y^q = 0 \}$ and ${\bf C}_2 = \{ x^q + y^p = 0 \}$ the two
irreducible components of $\{ f=0 \}$.

Let $\pi_{(q,p)}: \widehat{\C}^2_{(q,p)} \to \C^2$ be the $(q,p)$-weighted
blow-up at the origin. The new space has two singular points of type $(q;-1,p)$
and $(p;q,-1)$ located at the exceptional divisor $\mathcal{E}_1$. The local
equation of the total transform in the first chart is given by the function
$$
  x^{p(p+q)}(1+y^q)(x^{q^2-p^2}+y^p) : X(q;-1,p) \longrightarrow \C,
$$
where $x=0$ is the equation of the exceptional divisor and the other factors
correspond to the strict transform of ${\bf C}_1$ and ${\bf C}_2$ (denoted again
by the same symbol). Due to the cyclic action, $y^q+1=0$ produces only one branch.

Hence $\mathcal{E}_1$ has multiplicity $p(p+q)$; it intersects transversally
${\bf C}_1$ at a smooth point while it intersects ${\bf C}_2$ at a singular
point (the origin of the first chart) without normal crossings.

\begin{figure}[h t]
\centering\includegraphics{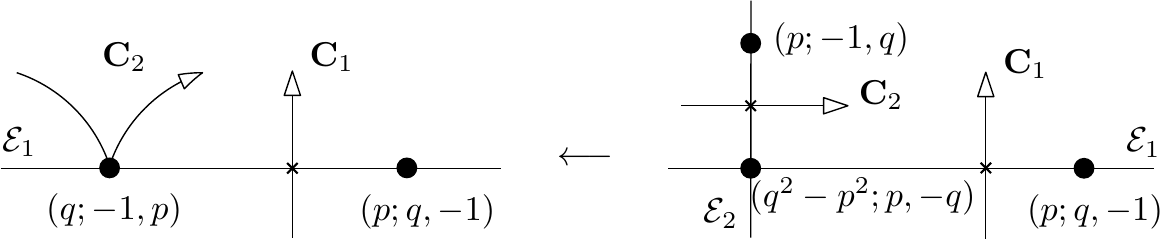}
\caption{Embedded $\Q$-resolution of $\{ (x^p+y^q)(x^q+y^p) = 0 \} \subset \C^2$.}
\label{fig-blowup_curve2}
\end{figure}

Note that $X(q;-1,p)=X(q;p,-p^2)=X(q;p,q^2-p^2)$ and we can apply~\ref{blow-up2-sing-pq}.
Let us consider $\pi_{(p,q^2-p^2),q}$ the weighted blow-up at the origin of
$X(q;-1,p)$ with respect to $(p,q^2-p^2)$,
$$
\pi_{(p,q^2-p^2),q} :\, \widehat{\C}^2_{(p,q^2-p^2),q} \longrightarrow 
X(q; p,q^2-p^2) = X(q;-1,p).
$$
The new space has two singular points of type $(p;-q,q^2-p^2) = (p;-1,q)$ and
$(q^2-p^2; p, -q)$. In the first chart, the local equation of the total
transform of $x^{p(p+q)}(x^{q^2-p^2}+y^p)$ is given by the function
$$
  x^{p (p+q)}(1+y^p): \, X(p;-1,q) \longrightarrow \C.
$$
Thus the new exceptional divisor $\mathcal{E}_2$ has multiplicity $p (p+q)$ and
intersects transversally the strict transform of ${\bf C}_2$ at a smooth point.
Hence $\pi_{(p,q^2-p^2),q} \circ \pi_{(q,p)}$ is an embedded
$\Q$-resolution of $\{f=0\} \subset \C^2$ where all quotient spaces are written
in a normalized form. Figure~\ref{fig-blowup_curve2} illustrates the whole process
and Figure~\ref{fig-grafo-res1} shows the dual graph.

\begin{figure}[h t]
\centering\includegraphics{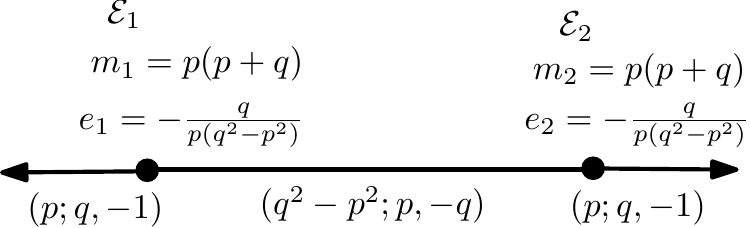}
\caption{Dual graph of the embedded $\Q$-resolution of $\{ (x^p+y^q)(x^q+y^p) = 0 \} \subset \C^2$.}
\label{fig-grafo-res1}
\end{figure}
\end{ex}

We consider now the general case.

\begin{nothing}\label{blow-up2-sing-ab}
{\bf Blow-up of $X(d;a,b)$ with respect to $\w:=(p,q)$.} Let
$X=X(d;a,b)$ assumed to be normalized. Let
$$\pi: = \pi_{(d;a,b),\w}: \, \widehat{X(d;a,b)}_{\w}
\longrightarrow X(d;a,b)
$$ 
be the weighted blow-up at the origin of $X(d;a,b)$
with respect to $\w = (p,q)$. Then, $\widehat{X(d;a,b)}_{\w}$ is covered by
$$
\widehat{U}_1 \cup \widehat{U}_2 = X \left( \begin{array}{c|cc} p & -1 & q \\ pd
& a & pb - qa \end{array} \right) \cup X \left( \begin{array}{c|cc} q & p & -1
\\ qd & qa-pb & b \end{array} \right)
$$
and the charts are given by 
$$
\begin{array}{c|c}
\text{First chart} & X \left( \begin{array}{c|cc} p & -1 & q \\ pd & a & pb - qa
\end{array} \right) \ \longrightarrow \ \widehat{U}_1, \\[0.5cm] & \,\big[ (x,y)
\big] \mapsto \big[ ((x^p,x^q y),[1:y]_{\w}) \big]_{(d;a,b)}. \\
\multicolumn{2}{c}{} \\
\text{Second chart} & X \left( \begin{array}{c|cc} q & p & -1 \\ qd & qa-pb & b
\end{array} \right) \ \longrightarrow \ \widehat{U}_2, \\[0.5cm]
& \big[ (x,y) \big] \mapsto \big[ ((x y^p, y^q),[x:1]_{\w}) \big]_{(d;a,b)}.
\end{array}
$$
The exceptional divisor $E = \pi_{(d;a,b),\w}^{-1}(0)$ is identified with the
quotient space $\bP^1_{\w}(d;a,b) := \bP^1_{\w}/\mu_{d}$ which is isomorphic to
$\bP^1$ under the map
$$
\begin{array}{rcl}
\bP^1_{\w}(d;a,b) & \longrightarrow & \bP^1, \\[0.1cm]
\,[x:y]_{\w,(d;a,b)} & \mapsto & [x^{dq/e}: y^{dp/e}],
\end{array}
$$
where $e: = \gcd(d,pb-qa)$. Again the singular points are cyclic and
correspond to the origins.

Let us apply Remark~\ref{rem-simpli} to the preceding charts.
Assume the type $(d;a,b)$ is normalized. To normalize these quotient spaces, note
that $e =\gcd(d,pb-qa)= \gcd(d, -q+\beta p b) =\gcd(pd, -q+\beta p b)=\gcd(qd,p-q a\mu)$, where
$\beta a\equiv \mu b\equiv 1\mod d$.

Then another expressions for the two charts are given below.
\begin{equation}\label{blowup_normalized_dim2}
\begin{array}{c|c}
\text{First chart} & X \left( \displaystyle\frac{pd}{e}; 1, \frac{-q+\beta p
b}{e} \right)  \ \longrightarrow \ \widehat{U}_1, \\[0.5cm] & \,\big[ (x^e,y)
\big] \mapsto \big[ ((x^p,x^q y),[1:y]_{\w}) \big]_{(d;a,b)}. \\
\multicolumn{2}{c}{} \\
\text{Second chart} & X \left( \displaystyle\frac{qd}{e}; \frac{-p+\mu qa}{e}, 1
\right) \ \longrightarrow \ \widehat{U}_2, \\[0.5cm] & \hspace{0.15cm} \big[
(x,y^e) \big] \mapsto \big[ ((x y^p, y^q),[x:1]_{\w}) \big]_{(d;a,b)}.
\end{array}
\end{equation}
Both quotient spaces are now written in their normalized
form. The equation of the charts will be useful to compute multiplicities, see Definition~\ref{def-multi}
and Remark~\ref{rem-multi}.
\end{nothing}

\begin{ex}\label{example_3}
Assume $\gcd(p,q) = \gcd(r,s) = 1$ and $\frac{p}{q} < \frac{r}{s}$. Let $f =
(x^p + y^q) (x^r + y^s)$ and consider ${\bf C}_1 = \{ x^p + y^q = 0 \}$ and
${\bf C}_2 = \{ x^r + y^s = 0 \}$. Working as in Example~\ref{example_2}, one
obtains Figure~\ref{fig-blowup-curve3} representing an embedded $\Q$-resolution of
$\{f=0\} \subset \C^2$. We start with a $(q,p)$-blow-up and we continue
with an $(s,q r-p s)$-blow-up over a point of type $(q;-1,p)$.

\begin{figure}[h t]
\centering\includegraphics{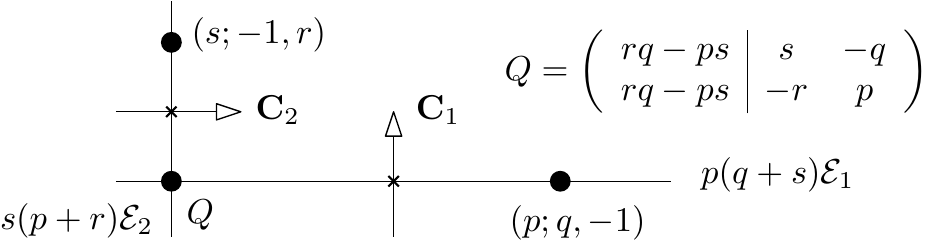}
\caption{Embedded $\Q$-resolution of $\{ (x^p+y^q)(x^r+y^s) = 0 \} \subset
\C^2$.}
\label{fig-blowup-curve3}
\end{figure}

The point $Q$ is also of type $(rq-ps;ar+bs,-1)$ where $ap+bq=1$.
In fact, it is in normalized form, since $\gcd(rq-ps,ar+bs)=1$.
After writing the quotient spaces in their normalized form one checks that this
resolution coincides with the one given in Example~\ref{example_2} assuming
$r=q$ and $s=p$.
The dual graph is shown in Figure~\ref{fig-grafo-res2}.

\begin{figure}[h t]
\centering\includegraphics{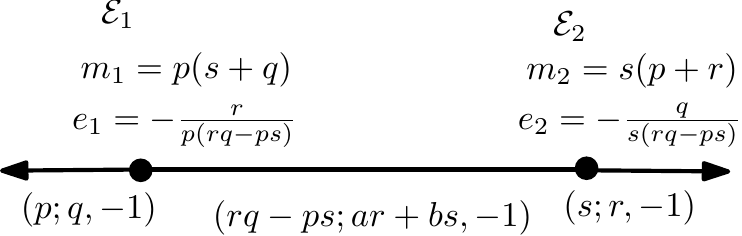}
\caption{Dual graph of the embedded $\Q$-resolution of $\{ (x^p+y^q)(x^r+y^s) = 0 \} \subset \C^2$,
$ap+bq=1$.}
\label{fig-grafo-res2}
\end{figure}
\end{ex}

\begin{nothing}\label{puiseux}\textbf{Puiseux expansion.}
Let us study the behavior of Puiseux pairs
under weighted blow-ups. Let ${\bf C} = \{ f = 0 \} \subset \C^2$ be the
irreducible plane curve given by
$$
\prod_{j=1}^d \big[ -y + (a_{11}x^{\frac{p_1}{q}} + \cdots + a_{k1}
x^{\frac{p_k}{q}}) + (a_{12} x^{\frac{r_1}{q s}} + \cdots + a_{l2}
x^{\frac{r_l}{q s}}) + \cdots \big],
$$
where $p_1 < \cdots < p_k$, $r_1 < \cdots < r_l$, $\frac{p_k}{q} <
\frac{r_1}{q s}$, $\gcd(p_1,q)=\gcd(r_1,s)=1$, and $q,s>1$
(after a  change of variables we may assume the first term
has non-integer exponent). 

Let $\pi_{(q,p_1)}: \widehat{\C}^2_{(q,p_1)} \to \C^2$ be the $(q,p_1)$-weighted
blow-up at the origin. In the first chart, that is, after performing the
substitution $(x,y) \mapsto (x^q,x^{p_1}y)$,
one obtains the following equation for the total transform
\begin{equation*}
\begin{split}
x^{p_1 d} \cdot \prod_{j=1}^d \big[ -y \, + \, & (a_{11} + a_{21} x^{p_2-p_1} +
\cdots + a_{k1} x^{p_k-p_1}) + \\ & (a_{12} x^{\frac{r_1 - p_1 s}{s}} + \cdots
+ a_{l2} x^{\frac{r_l - p_1 s}{s}}) + \cdots \big] = 0.
\end{split}
\end{equation*}
At first sight the exceptional divisor and the strict transform intersect at $d$
different smooth points. However, since $a_{1j}^q$ does not depend on $j$ by
conjugation, all of them are the same.

After change of coordinates 
$
y \mapsto y + (a_{11} +a_{21} x^{p_2-p_1} + \cdots + a_{k1} x^{p_k-p_1}),
$
the local equation of the total transform $\pi^{*}_{(q,p_1)}({\bf C})$ at this
point is
$$
  x^{p_1 d} \cdot \prod_{j=1}^{d/q} \big[ -y + (a_{12} x^{\frac{r_1  - p_1
s}{s}} + \cdots + a_{l2} x^{\frac{r_l  - p_1 s}{s}}) + \cdots \big] = 0.
$$
This proves that in the irreducible case, only a weighted blow-up is needed for
each Puiseux pair in order to compute an embedded $\Q$-resolution, and the weight
is determined by the Puiseux pairs. Moreover, the
embedded $\Q$-resolution obtained is as in
Figure~\ref{figure_puiseux_expansion}.

\begin{figure}[h t]
\centering
\includegraphics{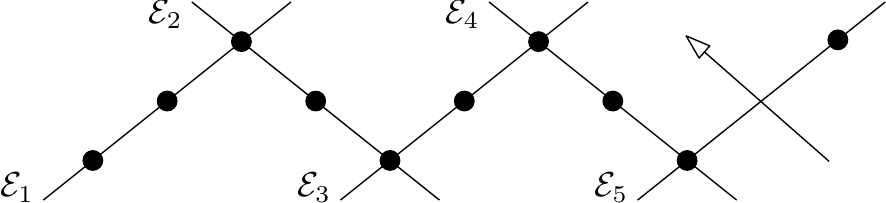}
\caption{Embedded $\Q$-resolution of an irreducible plane
curve.}\label{figure_puiseux_expansion}
\end{figure}

In the reducible case, one has to consider the weighted blow-ups
associated with the Puiseux pairs
of each irreducible component and add also weighted blow-ups associated
with the contact exponents for each pair of 
branches. There is another longer way to get this $\Q$-resolution:
perform a standard embedded resolution and contract any exceptional component having
at most two singular points in the divisor, cf.~\cite{Veys97}.
\end{nothing}

\begin{ex}\label{ex-hirzebruch1}
Let us consider $X:=\bP^2_\w$, for $\w=(p,q,r)$. We recall that $P:=[0:1:0]_\w$ is
a singular point of type $(q;p,r)$. We are going to perform the $(p,r)$-blow-up at this point.
The new surface $\hat{X}_P$ admits a map onto 
$\pi:\hat{X}_P\to\bP^1_{(p,r)}\cong\bP^1$ with rational fibers. This surface
has (at most) four singular points; two of them come from $X$ and they are of type
$(p;q,r)$, $Q:=[1:0:0]_\w$, and $(r;p,q)$, $R:=[0:0:1]_\w$. 
The other two points are in the exceptional divisor
$E$ and they are of type $(p;-q,r)$ and $(r;p,-q)$; 
the singular points which are quotient by $\mu_p$ are in the
same fiber for $\pi$
and the same happens for $\mu_r$. The map has two relevant sections, $E$ and the transform of 
$y=0$.
\end{ex}

\section{Cartier and Weil $\mathbb{Q}$-Divisors on $V$-Manifolds}

We recall the definitions
of Cartier and Weil divisors.
Let $X$ be an irreducible normal complex analytic variety. Denote $\cO_X$
the structure sheaf of $X$ and $\mathcal{K}_X$ the sheaf of total quotient rings
of $\cO_X$. Denote by $\mathcal{K}_X^{*}$ the (multiplicative) sheaf of
invertible elements in $\mathcal{K}_X$. Similarly $\cO_X^{*}$ is the sheaf of
invertible elements in~$\cO_X$. 
Note that an irreducible subvariety
$V$ corresponds to a prime ideal in the ring of sections of any local complex
model space meeting $V$.

\begin{defi}
A {\em Cartier divisor} on $X$ is a global section of the sheaf
$\mathcal{K}_X^{*}/\cO_X^{*}$ and it can be represented by giving an open covering $\{U_i\}_{i\in
I}$ of $X$ and, for all $i\in I$, an element $f_i \in \Gamma (U_i,
\mathcal{K}_X^{*})$ such that
$$
\frac{f_i}{f_j} \in \Gamma(U_i \cap U_j, \cO_X^{*}), \quad \forall i,j \in I.
$$
Two systems $\{(U_i,f_i)\}_{i\in I}$, $\{(V_j,g_j)\}_{j\in J}$ represent the
same Cartier divisor if and only if on $U_i \cap V_j$, $f_i$ and $g_j$ differ by
a multiplicative factor in $\cO_X(U_i\cap V_j)^{*}$.
The abelian group of Cartier divisors on $X$ is denoted by $\CaDiv(X)$. If $D :=
\{(U_i,f_i)\}_{i\in I}$ and $E := \{(V_j,g_j)\}_{j\in J}$ then $D + E = \{(U_i
\cap V_j, f_i g_j)\}_{i\in I, j\in J}$. 
\end{defi}

The functions $f_i$ above are called {\em local equations} of the divisor on
$U_i$. A Cartier divisor on $X$ is {\em effective} if it can be represented by
$\{(U_i,f_i)\}_i$ with all local equations $f_i \in \Gamma(U_i,\cO_{X})$.

Any global section~$f \in \Gamma(X,\mathcal{K}^{*}_X)$ determines a {\em
principal} Cartier divisor $(f)_X := \{(X,f)\}$ by taking all local equations
equal to $f$. That is, a Cartier divisor is principal if it is in the image of
the natural map $\Gamma(X,\mathcal{K}_X^{*}) \to
\Gamma(X,\mathcal{K}_X^{*}/\cO_X^{*})$. Two Cartier divisors $D$ and $E$ are {\em
linearly equivalent}, denoted by $D\sim E$, if they differ by a principal
divisor. The {\em Picard group} $\Pic(X)$ denotes the group of linear
equivalence classes of Cartier divisors.

The {\em support} of a Cartier divisor $D$, 
denoted by $\Supp(D)$ or $|D|$, is the subset of $X$ consisting of all points $x$ 
such that a local equation for $D$ is not in $\cO_{X,x}^{*}$. The support of $D$ is a closed subset of $X$.

\begin{defi}\label{defi_Weil}
A {\em Weil divisor} on $X$ is a locally finite linear combination with integral
coefficients of irreducible subvarieties of codimension one. The abelian group
of Weil divisors on $X$ is denoted by $\WeDiv(X)$. If all coefficients appearing
in the sum are non-negative, the Weil divisor is called {\em effective}.
\end{defi}

Given a Cartier divisor, using 
the notion of order of a divisor along an irreducible subvariety of codimension
one, there is a Weil divisor associated with it.
Let $V \subset X$ be an
irreducible subvariety of codimension one. It corresponds to a prime ideal in
the ring of sections of any local complex model space meeting~$V$. The {\em
local ring of $X$ along $V$}, denoted by $\cO_{X,V}$, is the localization of such
ring of sections at the corresponding prime ideal; it is a one-dimensional local
domain.

For a given $f \in \cO_{X,V}$ define $\ord_V(f)$ to be
$$
  \ord_V(f) := \length_{\cO_{X,V}} \left( \frac{\cO_{X,V}}{\langle f \rangle}
\right).
$$
This determines a well-defined group homomorphism $\ord_V:
\Gamma(X,\mathcal{K}_X^{*}) \to \Z$. This length can be computed as follows. Choose $x \in V$ such that $x$ is smooth in $X$
and $(V,x)$ defines an irreducible germ. This germ is the zero set of an
irreducible $g \in \cO_{X,x}$. Then
$
  \ord_V(f) = \ord_{V,x}(f),
$
where $\ord_{V,x}(f)$ is the classical order of a meromorphic function at a
smooth point with respect to an irreducible subvariety of codimension one; it is
known to be given by the equality $f = g^{\ord} \cdot h \in \cO_{X,x}$ where $h
\nmid g$.

Now if $D$ is a Cartier divisor on $X$, one writes $\ord_V (D) = \ord_V(f_i)$
where $f_i$ is a local equation of $D$ on any open set $U_i$ with $U_i \cap V
\neq \emptyset$. This is well defined since $f_i$ is uniquely determined up to
multiplication by units and the order function is a homomorphism. Define the
{\em associated Weil divisor} of a Cartier divisor~$D$ by 
$$
\begin{array}{cccl}
T_X: & \CaDiv(X) & \longrightarrow & \WeDiv(X), \\[0.15cm]
& D & \mapsto & \displaystyle \sum_{V \subset X} \ord_V(D) \cdot [V],
\end{array}
$$
where the sum is taken over all codimension one irreducible subvarieties $V$ of
$X$; the mapping
$T_X$ is a homomorphism of abelian groups.

A Weil divisor is {\em principal} if it is the image of a principal Cartier
divisor under $T_X$; they form a subgroup of $\WeDiv(X)$. If $\Cl(X)$ denotes
the quotient group of their equivalence classes, then $T_X$ induces a morphism
$\Pic(X) \rightarrow \Cl(X)$.

These two homomorphisms ($T_X$ and the induced one) are in general neither
injective nor surjective.

\begin{ex}\label{ex_non-cartier_weil_divisor}
Let $X:=X(2;1,1)$ and consider the Weil divisor $D$ associated with $x=0$. Since $x$ does not define a function
on $X$, then $D$ is not a Cartier divisor. Since $x^2: X(2;1,1) \to \C$,
then $E:=\{(X(2;1,1),x^2)\}$ is a Cartier divisor
and it is easily seen that $T_X(E)=2 D$.
\end{ex}

Example~\ref{ex_non-cartier_weil_divisor} above illustrates the general behavior
of Cartier and Weil divisors on $V$-manifolds. The following theorem
allows us to identify both notions on $V$-manifolds after tensorizing by~$\mathbb{Q}$.

\begin{theo}[\cite{kjj-qcw}]\label{cartier_weil_XdA}
Let $X$ be a $V$-manifold. Then the notion of Cartier and Weil divisor coincide
over $\mathbb{Q}$. More precisely, the linear map 
$$
  T_X \otimes 1: \CaDiv(X) \otimes_{\Z} \mathbb{Q} \longrightarrow \WeDiv(X)
\otimes_{\Z} \mathbb{Q} 
$$
is an isomorphism of $\mathbb{Q}$-vector spaces. In particular, for a given Weil
divisor $D$ on $X$ there always exists $k \in \Z$ such that $kD \in \CaDiv(X)$.
\end{theo}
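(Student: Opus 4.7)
The plan is to handle injectivity and surjectivity of $T_X \otimes 1$ separately; by flatness of $\mathbb{Q}$ over $\Z$, it is enough to show that $T_X$ is injective at the integer level and that its cokernel is torsion.

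\textbf{Injectivity.} A $V$-manifold is normal, so if $D \in \CaDiv(X)$ satisfies $T_X(D) = 0$, then each local equation $f_i$ of $D$ has $\ord_V(f_i) = 0$ for every codimension-one irreducible $V \subset U_i$; normality then forces $f_i \in \cO_X(U_i)^{*}$, whence $D = 0$.

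\textbf{Surjectivity up to torsion.} The property ``$kD$ is Cartier'' is local on $X$, so it suffices to produce, around every $x \in X$, a chart $U$ in which every Weil divisor admits a positive integer multiple that is principal. At a smooth point this is immediate; at a singular point $x$ we use the defining chart $U \cong B/G$ with $B \subset \C^n$ a small $G$-invariant open ball and $G \subset \GL(n,\C)$ a small finite subgroup (abelian in our setting), and write $\pi: B \to U$ for the quotient.

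Let $V \subset U$ be an irreducible Weil divisor. Its preimage $\pi^{-1}(V)$ is a reduced $G$-invariant codimension-one subvariety of $B$; since $\Pic(B) = 0$, we can choose $f \in \cO_B$ with divisor $(f)_B = \tilde V_1 + \cdots + \tilde V_r$, where $\tilde V_1, \dots, \tilde V_r$ are the irreducible components of $\pi^{-1}(V)$. The norm $N(f) := \prod_{g \in G} g \cdot f$ is genuinely $G$-invariant, so it descends to $h \in \cO_U$; its divisor on $B$ is $|G| \sum_j \tilde V_j$, by linearity of $\ord$ and a count over the $G$-orbit of the components. To conclude $T_U((h)_U) = |G|\,V$, we invoke the smallness of $G$: no non-identity element of $G$ can fix a codimension-one subvariety pointwise, since its fixed locus is a linear subspace and would then be a hyperplane, making the element a pseudo-reflection. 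Hence the generic stabilizer of every $\tilde V_j$ is trivial, $\pi$ is unramified along each $\tilde V_j$, and $\ord_{\tilde V_j}(\pi^{*}h) = \ord_V(h)$. Combining with $\pi^{*}h = N(f)$ gives $\ord_V(h) = |G|$, while a support argument (the divisor of $\pi^{*}h$ on $B$ has support precisely $\pi^{-1}(V)$) rules out additional components, so $(h)_U = |G|\,V$. Extending $\Z$-linearly over the finitely many components of $D$ meeting each chart and gluing across a cover yields, for any Weil divisor $D$, an integer $k$ (for instance the LCM of the relevant $|G_i|$) with $kD \in \CaDiv(X)$.

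The main delicate step is the ramification-free computation culminating in $T_U((h)_U) = |G|\,V$; the smallness hypothesis on $G$ is exactly what guarantees unramifiedness of $\pi$ in codimension one and therefore the clean identity $\ord_{\tilde V_j}(\pi^{*}h) = \ord_V(h)$, without which $|G|$ would fail to be a valid multiplier and one might only obtain the statement up to an uncontrolled ramification factor.
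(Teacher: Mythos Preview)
The paper does not prove this theorem in the present text---it is quoted from the companion paper---but the construction of the inverse summarized immediately afterwards (in~\ref{how_to_write_summarize}) is exactly your argument: on each chart $U=B/G$ with $G$ small, exhibit $|G|\cdot V$ as the divisor of a $G$-invariant function. Your norm $N(f)=\prod_{g\in G}g\cdot f$ is the natural formulation for arbitrary small $G$; in the abelian setting of the paper a reduced equation $f$ of a $G$-invariant hypersurface is automatically semi-invariant, so the paper's $f^{|G|}$ agrees with your $N(f)$ up to a unit. Your handling of injectivity via normality and of the key ramification step (smallness $\Rightarrow$ trivial inertia in codimension one $\Rightarrow$ $\ord_{\tilde V_j}(\pi^{*}h)=\ord_V(h)$) is correct and makes explicit what the summary leaves implicit.
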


\begin{defi}
Let $X$ be a $V$-manifold. The vector space of $\mathbb{Q}$-Cartier divisors is
identified under $T_X$ with the vector space of $\mathbb{Q}$-Weil divisors. A
{\em $\mathbb{Q}$-divisor} on $X$ is an element in $\CaDiv(X) \otimes_{\Z}
\mathbb{Q} = \WeDiv(X) \otimes_{\Z} \mathbb{Q}.$ The set of all
$\mathbb{Q}$-divisors on $X$ is denoted by $\mathbb{Q}$-$\Div(X)$.
\end{defi}

In \cite{kjj-qcw}, we also give a way to construct the inverse of $T_X \otimes 1$.
\begin{nothing}\label{how_to_write_summarize}
Here we summarize how to write a Weil divisor as a $\mathbb{Q}$-Cartier divisor
where $X$ is an algebraic $V$-manifold.
\begin{enumerate} \setlength{\itemsep}{3pt}
\item Write $D = \sum_{i \in I} a_i [V_i] \in \WeDiv(X)$, where $a_i \in \Z$ and
$V_i \subset X$ irreducible. Also choose $\{U_j\}_{j\in J}$ an open covering of
$X$ such that $U_j = B_j/G_j$ where $B_j \subset \C^n$ is an open ball and $G_j$
is a {\bf small} finite subgroup of $GL(n,\C)$.

\item For each $(i,j) \in I \times J$ choose a {\bf reduced} polynomial $f_{i,j}: U_j
\to \C$
such that $V_i \cap U_j = \{ f_{i,j} = 0 \}$, then 
$$
[V_i|_{U_j}] = \frac{1}{|G_j|} \{ (U_j, f_{i,j}^{|G_j|}) \}.
$$

\item Identifying $\{(U_j, f_{i,j}^{|G_j|})\}$ with its image $\CaDiv(U_j)
\hookrightarrow \CaDiv(X),$ one finally writes $D$ as a sum of locally principal
Cartier divisors over $\mathbb{Q}$,
$$
  D = \sum_{(i,j) \in I \times J} \frac{a_i}{|G_j|} \{ (U_j, f_{i,j}^{|G_j|})
\}.
$$
\end{enumerate}
\end{nothing}

Let us apply this procedure to write the exceptional divisor of a
weighted blow-up (which is in general just a Weil divisor) 
as a $\mathbb{Q}$-Cartier divisor.

\begin{ex}\label{how_to_write_example}
Let $X$ be a surface with abelian quotient singularities. Let $\pi: \widehat{X}
\to X$ be the weighted blow-up at a point of type $(d;a,b)$ with respect to
$\w=(p,q)$. In general, the exceptional divisor $E:=\pi^{-1}(0)\cong \bP^1_{\w}
(d;a,b)$ is a Weil divisor on $\widehat{X}$ which does not correspond to a
Cartier divisor. Let us write $E$ as an element in $\CaDiv(\widehat{X})
\otimes_{\Z} \mathbb{Q}$.

As in~\ref{blow-up2-sing-ab}, assume $\pi := \pi_{(d;a,b),\w}:
\widehat{X(d;a,b)}_{\w} \to X(d;a,b)$. Assume also that $\gcd(p,q)=1$ and
$(d;a,b)$ is normalized. Using the
notation introduced in~\ref{blow-up2-sing-ab}, the space $\widehat{X}$ is covered by $\widehat{U}_1
\cup \widehat{U}_2$ and the first chart is given by
\begin{equation}\label{isom_1st_chart_dim2}
\begin{array}{rcl}
Q_1 := X \Big( \frac{pd}{e}; 1, \frac{-q+\beta pb}{e} \Big) & \longrightarrow &
\widehat{U}_1, \\[0.25cm] \,\big[ (x^e,y) \big] & \mapsto & \big[ ((x^p,x^q
y),[1:y]_{\w}) \big]_{(d;a,b)},
\end{array}
\end{equation}
where $e := \gcd(d,pb-qa)$, see~\ref{blow-up2-sing-ab} for details.

In the first chart, $E$ is the Weil divisor $\{x=0\} \subset Q_1$. Note that the
type representing the space $Q_1$ is in a normalized form and hence the
corresponding subgroup of $GL(2,\C)$ is small.

Following the discussion~\ref{how_to_write_summarize}, the divisor $\{x=0\}
\subset Q_1$ is written as an element in $\CaDiv(Q_1) \otimes_{\Z} \mathbb{Q}$
like $\frac{e}{pd} \{ (Q_1,x^{\frac{pd}{e}}) \}$, which is mapped to
$\frac{e}{pd} \{ (\widehat{U}_1, x^d) \} \in \CaDiv(\widehat{U}_1) \otimes_{\Z}
\mathbb{Q}$ under the isomorphism~(\ref{isom_1st_chart_dim2}).

Analogously $E$ in the second chart is $\frac{e}{qd} \{ ( \widehat{U}_2,y^d)\}$.
Finally one writes the exceptional divisor of $\pi$ as claimed,
$$
E  = \frac{e}{dp} \big\{ (\widehat{U}_1, x^d), (\widehat{U}_2, 1) \big\} + 
\frac{e}{dq} \big\{ (\widehat{U}_1, 1), (\widehat{U}_2, y^d) \big\} =
\frac{e}{dpq} \big\{ (\widehat{U}_1, x^{dq}), (\widehat{U}_2, y^{dp}) \big\}.
$$
\end{ex}

\begin{nothing}\textbf{Cartier divisors and sections of line bundles.}
Given a line bundle $\pi:E\to X$ and a non-zero meromorphic section $s:X\dashrightarrow E$, a 
Cartier divisor $\Div(s)$
can be constructed using the expression of $s$ in charts.
By its very construction, a line bundle~$\cO(D)$ is associated
with a Cartier divisor $D = \{ (U_i, f_i) \}_{i\in I}$; moreover one can also 
associate a class of meromorphic sections $S_D:=\{s_{D}:X\dasharrow\cO(D)\mid \Div(s_D)=D\}$;
once such a section is constructed any other one in $S_D$ is obtained by multiplying
by an element in $\Gamma(X,\cO_X^*)$. A divisor is effective if the sections in $S_D$ are holomorphic.
 \end{nothing}

Let $F:Y \to X$ be a morphism between two irreducible complex analytic
varieties. The {\em pull-back} of a Cartier divisor $D = \{(U_i,f_i)\}_{i\in I}$
on $X$ is defined by pulling back the local equations of $D$ as
$$
  F^{*} (D) = \big\{(F^{-1}(U_i), f_i \circ F|_{F^{-1}(U_i)} \big\}_{i \in I}
$$
and it is a Cartier divisor on $Y$ provided $F(Y) \nsubseteq \Supp(D)$.
If $F(Y) \subset \Supp(D)$ then we identify $F^*(Y)$ with $F^*\cO(D)$. If this line
bundle admits nonzero meromorphic sections we consider $F^*(Y)$ as a linear
equivalence class of Cartier Divisors.
Moreover, $F^{*}$ respects sums of divisors and preserves linear equivalence.
In the same way, pull-backs of $\mathbb{Q}$-Cartier divisors can be defined.
Note that $F^*\cO(D)=\cO(F^*D)$ and the same happens for sections.

\begin{remark}
Line bundles on projective varieties admit nonzero meromorphic sections.
It is also the case for a ball $B\subset\C^n$ since only the trivial bundle can be constructed.
\end{remark}

By the results in this section
the pull-back of a Weil divisor can be also constructed
for $V$-manifolds.

\section{Rational Intersection Number on $V$-Surfaces: Generalities}

Now we have all the necessary ingredients to develop a rational intersection theory on
varieties with quotient singularities. This section is devoted to working out all the details,
but the following illustrative example will be given.

\begin{ex}\label{example_X211_intersection}
Let $X:=X(2;1,1)$ and consider the Weil divisors $D_1: = \{x=0\}$ and $D_2: =
\{y=0\}$. Let us compute the Weil divisor associated with $j^{*}_{D_1} D_2$,
where $j_{D_1}: |D_1| \hookrightarrow X$ is the inclusion.
Following~\ref{how_to_write_summarize}, the divisor $D_2$ can be written as
$\frac{1}{2} \{(X,y^2)\}$. By definition, since $|D_1| \nsubseteq |D_2|$, the
pull-back is
$
  j^{*}_{D_1} D_2 = \frac{1}{2} \big\{(D_1, y^2|_{D_1}) \big\},
$
and its associated Weil divisor is 
$$
  T_{D_1} ( j^{*}_{D_1} D_2 )  = \frac{1}{2} \sum_{P \in D_1} \ord_P (
y^2|_{D_1} ) \cdot [P]  = \frac{1}{2} \ord_{[(0,0)]} ( y^2|_{D_1} ) \cdot
[(0,0)] \ =  \ \frac{1}{2} \cdot [(0,0)].
$$

Note that there is an isomorphism $D_1 = X(2;1) \simeq \C$, $[y] \mapsto y^2$,
and the function $y^2: D_1 \to \C$ is converted into the identity map $\C \to
\C$ under this isomorphism. Hence $\ord_{[(0,0)]} ( y^2|_{D_1} ) = 1$. It is
natural to define the (global and local) intersection multiplicity as
$
D_1 \cdot D_2 = (D_1 \cdot D_2)_{[(0,0)]} = \frac{1}{2}.
$
\end{ex}

\begin{defi}\label{definition_degree_Q-divisor}
Let $\mathcal{C}$ be an irreducible analytic curve. Given a Weil divisor on
$\mathcal{C}$ with finite support, $D: = \sum_{i=1}^r n_i \cdot [P_i]$, its {\em
degree} is defined as $\deg (D) = \sum_{i=1}^r n_i \in \Z$. The {\em degree
of a Cartier divisor} is the degree of its associated Weil divisor, that is, by
definition $\deg(D) := \deg ( T_{\mathcal{C}} D )$.

The degree map is a group homomorphism. If $\mathcal{C}$ is compact, the degree
of a principal divisor is zero and thus passes to the quotient yielding $\deg:
\Cl(\mathcal{C}) \to \Z$, cf.~\cite[Prop.~1.4]{Fulton98}.
\end{defi}

\begin{defi}
Let $X$ be an analytic surface and consider $D_1 \in \WeDiv(X)$ and $D_2 \in
\CaDiv(X)$. If $D_1$ is irreducible, then the {\em intersection number} is
defined as
$$
  D_1 \cdot D_2 := \deg ( j_{D_1}^{*} D_2 ) \in \mathbb{Z},
$$
where $j_{D_1} \hookrightarrow X$ denotes the inclusion and $j_{D_1}^{*}$ its
pull-back functor. The expression above extends linearly if $D_1$ is a finite
sum of irreducible divisors. This intersection number is only well defined if
$D_1 \nsubseteq D_2$ and $D_1 \cap D_2$ is finite, or if the divisor $D_1$ is
compact, cf.~\cite[Ch.~2]{Fulton98}.

In the case $D_1 \nsubseteq D_2$ the number $(D_1 \cdot D_2)_P:= \ord_P (
j_{D_1}^{*} D_2 )$ with $P \in D_1 \cap D_2$ is well defined too and it is called
{\em local intersection number} at $P$.
\end{defi}

\begin{defi}\label{definition_intersection_number}
Let $X$ be a $V$-manifold of dimension $2$ and consider $D_1, D_2 \in
\mathbb{Q}$-$\Div(X)$. The {\em intersection number} is defined as
$$
  D_1 \cdot D_2 := \frac{1}{k_1 k_2} ( k_1 D_1 \cdot k_2 D_2 ) \in \mathbb{Q},
$$
where $k_1, k_2 \in \mathbb{Z}$ are chosen so that $k_1 D_1 \in \WeDiv(X)$ and
$k_2 D_2 \in \CaDiv(X)$. Analogously, it is defined the {\em local intersection
number} at $P \in D_1 \cap D_2$, if the condition $D_1 \nsubseteq D_2$ is
satisfied. Idem the pull-back is defined by $F^{*} (D_1) := \frac{1}{k_1} (k_1
D_1)$ if $F: Y \to X$ is a proper morphism between two irreducible $V$-surfaces.
\end{defi}

\begin{remark}\label{bezoutglolo}
If $D_1 \nsubseteq D_2$ and $D_1 \cap D_2$ is finite, then the global and the
local intersection number at $P \in D_1 \cap D_2$ are defined, and indeed by
definition 
$$
  D_1 \cdot D_2 = \displaystyle \sum_{P \in D_1 \cap D_2} ( D_1 \cdot D_2 )_P.
$$
\end{remark}

In the following result the main usual properties of the intersection multiplicity
are collected. Their proofs are omitted since they are well known for the
classical case (i.e.~without tensoring with $\mathbb{Q}$), cf.~\cite{Fulton98},
and our generalization is based on extending the classical definition to
rational coefficients.

\begin{prop}\label{all_properties_inter_number}
Let $X$ be a $V$-manifold of dimension $2$ and $D_1, D_2, D_3 \in
\mathbb{Q}$-$\Div(X)$. Then the local and the global intersection numbers,
provided the indicated operations make sense according to
Definition{\rm~\ref{definition_intersection_number}}, satisfy the following
properties: ($\alpha \in \mathbb{Q}$, $P \in X$)
\begin{enumerate}[\rm(1)] 
\item\label{all_properties_inter_number1} The intersection product is \textbf{bilinear} over $\mathbb{Q}$.

\item\label{all_properties_inter_number2} {\bf Commutative:} If $D_1 \cdot D_2$ and $D_2 \cdot D_1$ are both
defined, then $D_1 \cdot D_2 = D_2 \cdot D_1$. Analogously $( D_1 \cdot D_2 )_P
= ( D_2 \cdot D_1 )_P$ if both local numbers are defined. 

\item\label{all_properties_inter_number3} {\bf Non-negative:} Assume $D_1$ and $D_2$ are effective, irreducible and
distinct. Then $D_1 \cdot D_2$ and $(D_1 \cdot D_2)_P$ are greater than or equal
to zero if they are defined. Moreover, $(D_1 \cdot D_2)_P = 0$ if and only if $P
\notin |D_1| \cap |D_2|$, and hence $D_1 \cdot D_2 = 0$ if and only if $|D_1|
\cap |D_2| = \emptyset$.

\item\label{all_properties_inter_number4} {\bf Non-rational:} If $D_2 \in \CaDiv(X)$ and $D_1 \in \WeDiv(X)$ then
$D_1 \cdot D_2$ and $(D_1 \cdot D_2)_P$ are integral numbers. By the commutative
property, the same holds if $D_1$ is a Cartier divisor and $D_2$ is a Weil
divisor. 

\item\label{all_properties_inter_number5} {\bf $\mathbb{Q}$-Linear equivalence:} Assume $D_1$ has compact support.
If $D_2$ and $D_3$ are $\mathbb{Q}$-linearly equivalent, i.e.~$[D_2] = [D_3] \in
\Pic(X) \otimes_{\Z} \mathbb{Q}$, then $D_1 \cdot D_2 = D_1 \cdot D_3$. Due to
the commutativity, the roles of $D_1$ and $D_2$ can be exchanged. In particular
$D_1 \cdot D_2 = 0$ for every principal $\mathbb{Q}$-divisor $D_2$.

\item\label{all_properties_inter_number6} {\bf Normalization:} Let $\nu: \widetilde{|D_1|} \to |D_1|$ be the
normalization of the support of $D_1$ and $j_{D_1} : |D_1| \hookrightarrow X$
the inclusion. Then $D_1 \cdot D_2 = \deg \big( j_{D_1} \circ \nu \big)^{*}
D_2$. Observe that in this situation the normalization is a smooth complex
analytic curve. 
\end{enumerate}
\end{prop}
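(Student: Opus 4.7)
The plan is to bootstrap each property from its classical counterpart on a normal analytic surface via the clearing-denominators device of Definition \ref{definition_intersection_number}. Given $D_i\in\mathbb{Q}$-$\Div(X)$, Theorem \ref{cartier_weil_XdA} lets us choose positive integers $k_1, k_2$ with $k_1 D_1\in\WeDiv(X)$ and $k_2 D_2\in\CaDiv(X)$, so that $(k_1 D_1)\cdot(k_2 D_2)\in\mathbb{Z}$ is the classical Weil--Cartier intersection and $D_1\cdot D_2 = \frac{1}{k_1 k_2}(k_1 D_1)\cdot(k_2 D_2)$. The first task is to verify this is independent of the choice of $k_i$: replacing $k_i$ by $\ell k_i$ scales the classical product by $\ell$ (by $\mathbb{Z}$-bilinearity of the Weil--Cartier pairing on normal surfaces), which is cancelled by $\ell$ in the denominator. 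This simultaneously establishes property (1); (4) is then immediate by taking $k_1=k_2=1$.

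For commutativity (2), I choose $k$ such that $kD_1$ and $kD_2$ are simultaneously Cartier and Weil (possible by Theorem \ref{cartier_weil_XdA}). Then $D_1\cdot D_2$ and $D_2\cdot D_1$ both equal $\frac{1}{k^2}$ times classical Weil--Cartier intersections on a normal surface, and these agree by the classical symmetry of the Cartier--Cartier pairing on normal surfaces (Fulton, Ch.~2). For non-negativity (3), $k_i D_i$ remains effective for $k_i>0$; the pull-back $j_{k_1 D_1}^{\,*}(k_2 D_2)$ is then an effective Cartier divisor on a curve (its local equations are holomorphic), whose degree is a non-negative integer, vanishing exactly when the supports are disjoint.

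For $\mathbb{Q}$-linear equivalence (5), $[D_2]=[D_3]$ in $\Pic(X)\otimes\mathbb{Q}$ means some positive integer multiple of $D_2-D_3$ is a principal Cartier divisor; its pull-back to $|D_1|$ is principal on $|D_1|$, hence has degree zero on the compact curve $|D_1|$ by \cite[Prop.~1.4]{Fulton98}, and bilinearity concludes. For normalization (6), since $\nu:\widetilde{|D_1|}\to|D_1|$ is finite and birational, $\nu^{*}$ preserves the degree of zero-cycles on curves, so pulling back $k_2 D_2$ along $j_{D_1}\circ\nu$ and rescaling by $\tfrac{1}{k_2}$ recovers $D_1\cdot D_2$.

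The main obstacle I anticipate is verifying that the classical normal-surface intersection theory (commutativity, invariance of principal-divisor degree under pull-back, invariance of degree under normalization) applies transparently in our singular $V$-surface setting. All of these can be handled by localizing at each $P\in|D_1|\cap|D_2|$ and reducing to the explicit quotient model $X(\bd;A)$, where the desired statements reduce to length computations in the spirit of Example \ref{example_X211_intersection}; away from $\Sing(X)$ everything is the textbook smooth-surface case.
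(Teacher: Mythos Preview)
Your proposal is correct and matches the paper's approach exactly: the paper omits the proof entirely, stating only that the properties ``are well known for the classical case (i.e.~without tensoring with $\mathbb{Q}$), cf.~\cite{Fulton98}, and our generalization is based on extending the classical definition to rational coefficients.'' Your clearing-denominators reduction to the classical Weil--Cartier theory is precisely this extension spelled out, so you have in fact supplied more detail than the paper itself.
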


\begin{remark}
This rational intersection multiplicity was first introduced by Mumford for normal
surfaces, see~\cite[Pag.~17]{Mumford61}. Our
Definition~\ref{definition_intersection_number} coincides with Mumford's because
it has good behavior with respect to the pull-back, see
Theorem~\ref{formula_pull-back} and a direct proof will be also given later, see Proposition \ref{prop_intmat}. The main advantage is that ours does not
involve a resolution  of the ambient space and, for instance, this allows us to
easily find formulas for the self-intersection numbers of the exceptional
divisors of weighted blow-ups, without computing any resolution, see
Proposition~\ref{formula_self-intersection}.
\end{remark}

\begin{theo}\label{formula_pull-back}
Let $F:Y \to X$ be a proper morphism between two irreducible $V$-manifolds of
dimension $2$, and $D_1, D_2 \in \mathbb{Q}$-$\Div(X)$. 
\begin{enumerate}[\rm(1)] \setlength{\itemsep}{3pt}
\item\label{formula_pull-back1} The cardinal of $F^{-1}(P)$, $P \in X$ being generic, is a finite
constant. This number is denoted by $\deg(F)$. 

\item\label{formula_pull-back2} If $D_1 \cdot D_2$ is defined, then so is the number $F^{*} ( D_1 ) \cdot
F^{*} ( D_2 )$. In such a case $F^{*} ( D_1 ) \cdot F^{*} ( D_2 ) = \deg (F) \,
( D \cdot E )$. 

\item\label{formula_pull-back3} If $(D_1 \cdot D_2)_P$ is defined for some $P \in X$, then so is $( F^{*}
(D_1) \cdot F^{*}(D_2) )_Q$, $\forall Q \in F^{-1}(P)$, and 
$\sum_{Q \in F^{-1}(P)} ( F^{*}(D_1) \cdot F^{*}(D_2) )_Q = \deg(F) (D_1 \cdot
D_2)_P$.
\end{enumerate}
\end{theo}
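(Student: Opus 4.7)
The plan is to treat the three items in order.

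For part (1), since $F: Y \to X$ is proper between irreducible complex analytic varieties of the same dimension, the generic fiber is $0$-dimensional. Properness together with equidimensionality forces $F$ to be a finite surjective map on a dense Zariski open subset of $X$, and on the complement of the analytic branch locus $F$ is a topological covering whose sheet count is the common cardinality of the generic fibers. This common value is by definition $\deg(F)$; the $V$-manifold structure plays no role in this step.

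For part (2), I first use bilinearity and the identification $T_X \otimes 1$ of Theorem~\ref{cartier_weil_XdA} to reduce to the case where $D_1 = V$ is an irreducible Weil divisor and $D_2$ is an honest Cartier divisor on $X$, after clearing denominators. Decompose the pull-back as $F^{*}V = \sum_j m_j W_j$ with $W_j \subset Y$ irreducible. For each $j$, let $\nu_j: \widetilde{W}_j \to W_j$ and $\nu_V: \widetilde{V} \to V$ denote the normalizations; $F$ induces a finite morphism $\widetilde{F}_j: \widetilde{W}_j \to \widetilde{V}$ between smooth curves. Applying the normalization property of Proposition~\ref{all_properties_inter_number} together with the classical projection formula for a finite map of smooth curves yields
\[
F^{*}(V) \cdot F^{*}(D_2) = \sum_j m_j \deg(\widetilde{F}_j) \cdot \deg\bigl((j_V \circ \nu_V)^{*} D_2\bigr) = \Bigl( \sum_j m_j \deg(\widetilde{F}_j) \Bigr) (V \cdot D_2).
\]
The theorem then reduces to the balance identity $\sum_j m_j \deg(\widetilde{F}_j) = \deg(F)$, which expresses that a generic point of $V$ has $\deg(F)$ preimages distributed among the branches $W_j$ with ramification multiplicities $m_j$.

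For part (3), the argument is local: at each $Q \in F^{-1}(P)$ lying on some $W_{j(Q)}$, one shows $(F^{*}D_1 \cdot F^{*}D_2)_Q = m_{j(Q)} \cdot \ord_Q\bigl(\widetilde{F}_{j(Q)}^{\,*} (j_V \circ \nu_V)^{*} D_2\bigr)$, and summing over the fiber reproduces $\deg(F) \cdot (D_1 \cdot D_2)_P$ by virtue of the pointwise analogue $\sum_{Q \in F^{-1}(P)} m_{j(Q)} \cdot e_{Q}(\widetilde{F}_{j(Q)}) = \deg(F)$, where $e_Q$ denotes the local ramification index at $Q$.

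The main obstacle is establishing the balance formula in the presence of abelian quotient singularities. I would handle it by working locally on smooth covers $\C^n \to \C^n / G$: since the multiplicities $m_j$ are computed from reduced local equations as explained in~\ref{how_to_write_summarize}, and since finite group quotients preserve both $\deg(F)$ and the relevant ramification data, the identity descends from its classical counterpart for maps of smooth surfaces.
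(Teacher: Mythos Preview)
The paper does not actually supply a proof of this theorem. It is stated immediately after Proposition~\ref{all_properties_inter_number}, whose proofs are explicitly ``omitted since they are well known for the classical case (i.e.~without tensoring with~$\mathbb{Q}$), cf.~\cite{Fulton98}, and our generalization is based on extending the classical definition to rational coefficients''; Theorem~\ref{formula_pull-back} receives the same tacit treatment and the text moves directly on to reviewing classical facts. So there is no paper-proof to compare against beyond the implicit pointer to Fulton.

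Your outline is precisely the standard argument that reference would provide: reduce by bilinearity and denominator-clearing to an irreducible Weil divisor $V$ against an honest Cartier divisor, push the computation to the normalizations of $V$ and of the components $W_j$ of $F^*V$, and invoke the projection formula for finite maps of smooth curves together with the balance identity $\sum_j m_j \deg(\widetilde F_j)=\deg(F)$. That is the approach the paper is implicitly deferring to, so your route is the intended one.

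One small imprecision in your part~(3): a point $Q\in F^{-1}(P)$ may lie on several components $W_j$ of $F^*V$, so the local contribution at $Q$ is a sum over all such~$j$, not a single term indexed by one $j(Q)$. With that correction the local balance identity is just the usual ramification count over the fiber and the argument goes through.
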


The rest of this section is devoted to reviewing some classical results concerning
the intersection multiplicity, namely, the computation of the local intersection
number at a smooth point, the self-intersection numbers of the exceptional
divisors of blow-ups at a smooth point, and the classical B{\'e}zout's Theorem on
$\bP^2$. Afterwards, these results are generalized in the upcoming sections.


\begin{nothing}\label{local_inter_number_smooth} \textbf{Local intersection number at a
smooth point.} Let $X$ be a smooth analytic surface. Consider $D_1$, $D_2$ two
effective (Cartier or Weil)\footnote{Recall that on smooth analytic varieties,
Cartier and Weil divisors are identified and their equivalence classes coincide
under this identification, i.e.~$\Pic(X) = \Cl(X)$.} divisors on $X$ and $P \in X$ a point.
The divisor $D_i$ is locally given by a holomorphic function $f_i$, $i=1,2$, in
a neighborhood of $P$. Then $(D_1 \cdot D_2)_P$ equals
$$
  \ord_{P} (f_2|D_1) = \length_{\cO_{D_1,P}} \left( \frac{\cO_{D_1,P}}{f_2|_{D_1}}
\right) = \dim_{\C} \left( \frac{\cO_{X,P}}{\langle f_1, f_2 \rangle} \right).
$$

Moreover, $X$ being a smooth variety, $\cO_{X,P}$ is isomorphic to $\C \{ x,y \}$
and hence the previous dimension can be computed, for instance, by means of
Gröbner bases with respect to local orderings.
\end{nothing}

\begin{nothing}\label{classic_blow-ups}\textbf{Classical blow-up at a smooth
point.} Let $X$ be a smooth analytic surface. Let $\pi: \widehat{X} \to X$ be
the classical blow-up at a (smooth) point $P$. Consider $C$ and $D$ two (Cartier
or Weil) divisors on $X$ with multiplicities $m_C$ and $m_D$ at $P$.  Denote by
$E$ the exceptional divisor of $\pi$, and by $\widehat{C}$ (resp.~$\widehat{D}$)
the strict transform of $C$ (resp.~$D$). Then, 
\begin{enumerate} \setlength{\itemsep}{3pt}
\item $E\cdot \pi^{*}(C) = 0$, \qquad $\pi^{*}(C) = \widehat{C} + m_C E$, \qquad
$E \cdot \widehat{C} = m_C$.
\item $E^2 = -1$, \qquad $\widehat{C} \cdot \widehat{D} = C\cdot D - m_C m_D$,
\qquad $\widehat{D}^2 = D^2 - m_D^2$,\, ($D$ compact).
\end{enumerate}
The first properties follow from the
local equations of the blow-up, since $C$ is principal near $P$. The second ones are easy consequences of the first ones.
\end{nothing}

\begin{nothing}\label{classic_bezout}\textbf{B{\'e}zout's Theorem on $\bP^2$.}
Every
analytic (Cartier or Weil) divisor on $\bP^2$ is algebraic and thus can be
written as a difference of two effective divisors. On the other hand, every
effective divisor is defined by a homogeneous polynomial. The {\em degree of an
effective divisor on $\bP^2$} is the degree $\deg(F)$ of the corresponding
homogeneous polynomial. This degree map is extended linearly yielding a group
homomorphism $\deg: \Div(\bP^2) \to \Z$ that characterizes the linear equivalence
classes in the following sense: $\forall D_1, D_2 \in \Div(\bP^2)$,
\begin{equation}\label{characterization_divisor_P2}
\Big[ \, [D_1] = [D_2] \in \Pic(\bP^2) = \Cl(\bP^2) \, \Longleftrightarrow \,
\deg(D_1) = \deg(D_2) \, \Big].
\end{equation}

Let $D_1$, $D_2$ be two divisors on $\bP^2$, then $D_1 \cdot D_2 = \deg(D_1)
\deg(D_2)$. In particular, the self-intersection number of a divisor $D$ on
$\bP^2$ is given by $D^2 = \deg(D)^2$. In addition, if $|D_1| \nsubseteq |D_2|$,
then $|D_1| \cap |D_2|$ is a finite set of points and, by Remark \ref{bezoutglolo}, one has
$$
  \deg(D_1) \deg(D_2) = D_1 \cdot D_2 = \sum_{P \in |D_1| \cap |D_2|} ( D_1
\cdot D_2)_P.
$$

The proof of this result is an easy consequence
of~\ref{all_properties_inter_number}, and the fact that~$D_i$ is linearly
equivalent to $\deg(D_i) L_i$, where $L_i$ is a linear form, $i=1,2$,
by~(\ref{characterization_divisor_P2}). 
\end{nothing}

In what follows, we generalize the classical results
of~\ref{local_inter_number_smooth}, \ref{classic_blow-ups}
and~\ref{classic_bezout} to $V$-manifolds, weighted blow-ups and quotient
weighted projective planes, respectively.

We start this generalization providing the computation
of local intersection numbers for quotient surfaces. 
Let $X$ be an algebraic $V$-manifold of dimension $2$. Consider $D_1$ and $D_2$
two effective $\mathbb{Q}$-divisors on $X$, and $P \in X$ a point. The divisor
$D_i$ is locally given in a neighborhood of $P$ by a reduced polynomial $f_i$,
$i=1,2$. On the other hand the
point $P$ can be assumed to be a normalized type of the form $(d;a,b)$. Hence
the computation of $(D_1 \cdot D_2)_P$ is reduced to the following particular
case.

\begin{nothing}\label{computation_local_number_V-surface}
\textbf{Local intersection number on $X(d;a,b)$.} 
Denote by $X$ the cyclic quotient space $X(d;a,b)$ and
consider two divisors $D_1 = \{ f_1 = 0 \}$ and $D_2 = \{ f_2 = 0 \}$ given by
$f_1,f_2\in\C\{x,y\}$ reduced.
Assume that, $(d; a, b)$ is normalized, $D_1$ is
irreducible, $f_1$ induces a function on $X$, and 
finally that $D_1 \nsubseteq D_2$.


Then as Cartier divisors $D_1 = \{(X,f_1) \}$, $D_2 = \frac{1}{d} \{(X,f_2^d)
\}$, and the pull-back is $j^{*}_{D_1} D_2 = \frac{1}{d} \{(D_1,f_2^d|_{D_1})\}$.
Following the definition, the local number $(D_1 \cdot D_2)_{[P]}$ equals
$$
  \frac{1}{d} \ord_{[P]} ( f_2^d|_{D_1} ) = \frac{1}{d} \length_{\cO_{D_1,[P]}}
\left( \frac{\cO_{D_1,[P]}}{\langle f_2^d|_{D_1} \rangle} \right) = \frac{1}{d}
\dim_{\C} \left( \frac{\cO_{X,[P]}}{\langle f_1, f_2^d \rangle} \right).
$$

There is an isomorphism of local rings if $P = (\alpha, \beta ) \neq (0,0)$,
\begin{eqnarray*}
  \cO_{X,[P]} & \longrightarrow & \cO_{\C^2,(\alpha,\beta)}, \\
  (x,y) & \mapsto & (x+\alpha, y+\beta),
\end{eqnarray*}
and for $P=(0,0)$ one has $\cO_{X,[(0,0)]} \cong \C\{ x,y \}^{\mu_d}$.

Also $\frac{1}{d} \dim_{\C} ( \C\{ x, y \}/ \langle f_1, f_2^d \rangle )$
coincides with $\dim_\C \C\{x,y\}/\langle f_1, f_2 \rangle$. 
So finally,
$$
(D_1 \cdot D_2)_{[P]} =
\begin{cases}
\displaystyle \frac{1}{d} \dim_{\C} \left( \frac{\C\{x,y\}^{\mu_d}}{\langle f_1,
f_2^d \rangle} \right), & P = (0,0) \, ; \\[0.5cm]
\displaystyle \dim_{\C} \left( \frac{\C\{x-\alpha,y-\beta\}}{\langle f_1, f_2
\rangle} \right), & P = (\alpha, \beta) \neq (0,0) \, .
\end{cases}
$$

Analogously, if $f_1$ does not define a function on $X$, for computing the
intersection number at $[(0,0)]$ one substitutes $f_1$ by $f_1^d$ and divides
the result by $d$.
\end{nothing}

Another way to calculate $(D_1 \cdot D_2)_{[(0,0)]}$ is to consider the natural
projection $\pr: \C^2 \to X(d;a,b)$ and apply the local pull-back formula, see
Theorem~\ref{formula_pull-back}\eqref{formula_pull-back3}. Indeed, let $\widetilde{D}_i$ be the
pull-back divisor of $D_i$ under the projection, $i=1,2$. Then,
$$
  (D_1 \cdot D_2)_{[(0,0)]} = \frac{1}{d} ( \widetilde{D}_1 \cdot
\widetilde{D}_2 )_{(0,0)} = \frac{1}{d} \dim_{\C} \left(
\frac{\C\{x,y\}}{\langle f_1, f_2 \rangle} \right).
$$
In particular, combining the two expressions obtained for $(D_1 \cdot
D_2)_{[(0,0)]}$, if two polynomials $f$ and $g$ define functions on $X$, then
$$
  \dim_{\C} \left( \frac{\C\{x,y\}^{\mu_d}}{\langle f, g \rangle} \right) =
  \frac{1}{d} \dim_{\C} \left( \frac{\C\{x,y\}}{\langle f, g \rangle} \right).
$$

As in the smooth case, all the preceding dimensions can be computed by means of
Gröbner bases with respect to local orderings.

\begin{ex}
Let $X = X(2;1,1)$ and consider the Weil divisors $D_1 = \{x=0\}$ and $D_2 =
\{y=0\}$. In Example~\ref{example_X211_intersection} it is showed, by directly using
the definition of the intersection product, that $(D_1 \cdot
D_2)_{[(0,0)]} = \frac{1}{2}.$

Two expressions have been obtained for computing this local number:
\begin{itemize}\setlength{\itemsep}{7.5pt}
\item $\displaystyle (D_1 \cdot D_2)_{[(0,0)]} = \frac{1}{2} \dim_{\C} \left(
\frac{\C\{x,y\}}{\langle x, y \rangle} \right) = \frac{1}{2}$.
\item $\displaystyle (D_1 \cdot D_2)_{[(0,0)]} = \frac{1}{4} \dim_{\C} \left(
\frac{\C\{ x, y \}^{\mu_2}}{\langle x^2, y^2 \rangle} \right) = \frac{1}{4}
\cdot 2 = \frac{1}{2}$.
\end{itemize}
For the second equality note that $\C \{ x,y \}^{\mu_2} = \C \{ x^2, y^2, xy \}$.
\end{ex}


\section{Intersection Numbers and Weighted Blow-ups}
                                                                                                                                                                                                                                  
Previously weighted blow-ups were introduced as a tool for computing embedded
$\Q$-resolutions. To obtain information about the corresponding embedded
singularity, an intersection theory on $V$-manifolds has been developed. Here we
calculate self-intersection numbers of exceptional divisors of weighted blow-ups
on analytic varieties with abelian quotient singularities, see Proposition
\ref{formula_self-intersection}.

We state some preliminary lemmas separately so that the proof of the main result
of this section becomes simpler.

\begin{lemma}\label{lem-except-pb}
Let $X$ be an analytic surface with abelian quotient singularities and let $\pi:
\widehat{X} \to X$ be  a weighted blow-up at a point $P \in X$. Let $C$ be a
$\mathbb{Q}$-divisors on $X$ and $E$ 
the exceptional divisor of $\pi$. Then, $E \cdot \pi^{*}(C) = 0$.  
\end{lemma}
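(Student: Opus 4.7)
The plan is to exploit the fact that $\pi$ contracts $E$ to a single point, so that pulling back any line bundle from $X$ along $\pi$ and then restricting to $E$ always produces a trivial bundle.

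First I would reduce to the Cartier case: by Theorem~\ref{cartier_weil_XdA}, pick $k\in\Z$ such that $kC\in\CaDiv(X)$. By bilinearity (Proposition~\ref{all_properties_inter_number}\eqref{all_properties_inter_number1}) it suffices to show that $E\cdot\pi^{*}(kC)=0$. Now I would invoke the normalization formula of Proposition~\ref{all_properties_inter_number}\eqref{all_properties_inter_number6}: letting $\nu:\widetilde{E}\to E$ be the normalization and $j_E:E\hookrightarrow\widehat{X}$ the inclusion, we have
\[
E\cdot\pi^{*}(kC)\;=\;\deg\bigl((j_E\circ\nu)^{*}\pi^{*}(kC)\bigr)\;=\;\deg\bigl((\pi\circ j_E\circ\nu)^{*}\cO(kC)\bigr),
\]
where in the second equality I pass to the associated line bundle, using that pull-back of Cartier divisors corresponds to pull-back of $\cO(-)$ and that the degree on the smooth compact curve $\widetilde{E}$ is well-defined on line bundles.

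The key observation is that $\pi\circ j_E\circ\nu:\widetilde{E}\to X$ is the constant map to $P$, since $E=\pi^{-1}(P)$. The pull-back of any line bundle by a constant map is the trivial bundle, so $(\pi\circ j_E\circ\nu)^{*}\cO(kC)\cong\cO_{\widetilde{E}}$ has degree zero. Dividing by $k$ gives the claim.

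The one point requiring care is the case $P\in|C|$, in which $E\subset|\pi^{*}(C)|$ and so $\pi^{*}(C)$ must be interpreted as a linear equivalence class of Cartier divisors on $\widetilde{E}$ (via $(\pi\circ j_E\circ\nu)^{*}\cO(kC)$), rather than as a specific divisor whose support meets $E$ properly. This is exactly the situation addressed by the discussion of pull-backs at the end of Section~5, and is harmless here because the degree depends only on the isomorphism class of the line bundle. Besides this bookkeeping, the argument is essentially a one-line application of ``constant pull-back $=$ trivial bundle'' once the normalization formula is available.
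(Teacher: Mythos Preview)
Your proof is correct. The paper's own argument is a one-liner invoking Proposition~\ref{all_properties_inter_number}\eqref{all_properties_inter_number5}: since $E$ is compact and, after shrinking $X$ around $P$ and clearing denominators, $\pi^{*}(kC)$ is a \emph{principal} divisor in a neighbourhood of $E$, the intersection vanishes. You instead route through the normalization formula~\eqref{all_properties_inter_number6} and the line-bundle interpretation of pull-back, observing that $\pi\circ j_E\circ\nu$ is constant so $(\pi\circ j_E\circ\nu)^{*}\cO(kC)$ is trivial on the compact curve~$\widetilde{E}$. The two arguments are equivalent at heart---``principal near $E$'' and ``trivial line bundle on $\widetilde{E}$'' encode the same fact---but your version is more explicit about the bookkeeping when $E\subset|\pi^{*}(C)|$, whereas the paper simply defers to the smooth case.
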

\begin{proof}
Using Proposition~\ref{all_properties_inter_number}\eqref{all_properties_inter_number5} 
it can be proven as in the smooth case since $\pi^{*} (C)$ is locally principal as
$\mathbb{Q}$-divisor on~$\widehat{X}$. 
\end{proof}

\begin{lemma}\label{lemma_formula_self-intersection}
Let $h: Y \to X$ be a proper morphism between two irreducible $V$-manifolds of
dimension $2$.

Consider $\pi_X: \widehat{X} \to X$ (resp.~$\pi_Y: \widehat{Y} \to Y$) a
weighted blow-up at a point of $X$ (resp.~$Y$) and take $C_X$ a
$\mathbb{Q}$-divisor on $X$. Denote by $E_X$ (resp.~$E_Y$) the exceptional
divisor of $\pi_X$ (resp.~$\pi_Y$), and $\widehat{C}_X$ the strict transform of
$C_X$.

Let us suppose that there exist two rational numbers, $e$ and $\nu$, and a
finite proper morphism $H: \widehat{Y} \to \widehat{X}$ completing the
commutative diagram

\begin{minipage}[b]{0.45\linewidth}
$$
\begin{array}{cc}
\xymatrix{
\ar @{} [dr] | {\#}
\widehat{Y} \ar[r]^{H} \ar[d]_{\pi_Y} & \widehat{X} \ar[d]^{\pi_X} \\
Y \ar[r]_{h} & X
}
\end{array}
$$
such that: 
\begin{enumerate}[\rm(a)]
\item\label{lemma_formula_self-intersection-a}  $H^{*} ( E_X ) = e E_Y$,
\item\label{lemma_formula_self-intersection-b}  $\pi_Y^{*} (
h^{*} ( C_X ) ) = H^{*}(\widehat{C}_X) + \nu E_Y$.
\end{enumerate}
\end{minipage}
\hspace{0.5cm}
\begin{minipage}[b]{0.45\linewidth}
Then the following
equalities hold:
\begin{enumerate}[\rm(1)] \setlength{\itemsep}{3pt}
\item\label{lemma_formula_self-intersection1} $\pi_X^{*}(C_X) = \widehat{C}_X + \frac{\nu}{e} E_X$, 
\item\label{lemma_formula_self-intersection2} $E_X \cdot \widehat{C}_X = \frac{- e \, \nu \ }{\deg (h)} E_Y^2$, 
\item\label{lemma_formula_self-intersection3} $E_X^2 = \frac{e^2}{\deg (h)} E_Y^2$.
\end{enumerate}
\end{minipage}
\end{lemma}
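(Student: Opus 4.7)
The plan is to derive all three identities from the commutative square by combining: (i) the defining property of the strict transform, which forces $\pi_X^{*}(C_X)-\widehat{C}_X$ to be supported on $E_X$; (ii) functoriality of pull-back (so $H^{*}\pi_X^{*}=\pi_Y^{*}h^{*}$ on $\mathbb{Q}$-divisors); (iii) Lemma~\ref{lem-except-pb}; and (iv) the multiplicativity $H^{*}(D_1)\cdot H^{*}(D_2)=\deg(H)\,(D_1\cdot D_2)$ from Theorem~\ref{formula_pull-back}\eqref{formula_pull-back2}.

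For~\eqref{lemma_formula_self-intersection1}, since $E_X$ is irreducible and $\widehat{C}_X$ agrees with $\pi_X^{*}(C_X)$ off $E_X$, there is a unique $m\in\mathbb{Q}$ with $\pi_X^{*}(C_X)=\widehat{C}_X+mE_X$. To pin down $m$, I would apply $H^{*}$ and use commutativity of the diagram:
$$
H^{*}\pi_X^{*}(C_X)=\pi_Y^{*}h^{*}(C_X).
$$
Expanding the left-hand side by hypothesis~\eqref{lemma_formula_self-intersection-a} gives $H^{*}(\widehat{C}_X)+me\,E_Y$, while expanding the right-hand side by~\eqref{lemma_formula_self-intersection-b} gives $H^{*}(\widehat{C}_X)+\nu E_Y$. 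Comparing coefficients of the irreducible divisor $E_Y$ yields $m=\nu/e$, proving~\eqref{lemma_formula_self-intersection1}.

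For~\eqref{lemma_formula_self-intersection3}, I would apply Theorem~\ref{formula_pull-back}\eqref{formula_pull-back2} to the finite proper morphism $H$ with $D_1=D_2=E_X$:
$$
H^{*}(E_X)\cdot H^{*}(E_X)=\deg(H)\,E_X^{2}.
$$
By~\eqref{lemma_formula_self-intersection-a} the left side equals $e^{2}\,E_Y^{2}$. Finally, the identity $\deg(H)=\deg(h)$ follows from $\pi_X\circ H=h\circ\pi_Y$ together with $\deg(\pi_X)=\deg(\pi_Y)=1$ (weighted blow-ups are birational), giving $E_X^{2}=\tfrac{e^{2}}{\deg(h)}\,E_Y^{2}$.

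For~\eqref{lemma_formula_self-intersection2}, I would use Lemma~\ref{lem-except-pb} to write $0=E_X\cdot\pi_X^{*}(C_X)$, then substitute~\eqref{lemma_formula_self-intersection1} to obtain $E_X\cdot\widehat{C}_X=-\tfrac{\nu}{e}E_X^{2}$; inserting~\eqref{lemma_formula_self-intersection3} then yields $E_X\cdot\widehat{C}_X=\tfrac{-e\nu}{\deg(h)}\,E_Y^{2}$. The only nontrivial point is the compatibility $H^{*}\pi_X^{*}=\pi_Y^{*}h^{*}$ and the fact that pull-back of a $\mathbb{Q}$-Cartier divisor is well defined on $\widehat{Y}$ (so that coefficient comparison on $E_Y$ makes sense); everything else is formal manipulation once Theorem~\ref{formula_pull-back} is in place.
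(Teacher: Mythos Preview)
Your proof is correct and follows essentially the same approach as the paper: both arguments establish~\eqref{lemma_formula_self-intersection1} by writing $\pi_X^{*}(C_X)=\widehat{C}_X+mE_X$, pulling back via $H^{*}$, and comparing with $\pi_Y^{*}h^{*}(C_X)$ using hypotheses~(a)--(b); and both obtain~\eqref{lemma_formula_self-intersection3} directly from Theorem~\ref{formula_pull-back}\eqref{formula_pull-back2} applied to $E_X$. The only minor difference is in~\eqref{lemma_formula_self-intersection2}: the paper computes $H^{*}(E_X)\cdot H^{*}(\widehat{C}_X)$ directly, invoking Lemma~\ref{lem-except-pb} on $\widehat{Y}$ (i.e.\ $E_Y\cdot\pi_Y^{*}(h^{*}(C_X))=0$), whereas you invoke Lemma~\ref{lem-except-pb} on $\widehat{X}$ (i.e.\ $E_X\cdot\pi_X^{*}(C_X)=0$) and then deduce~\eqref{lemma_formula_self-intersection2} algebraically from~\eqref{lemma_formula_self-intersection1} and~\eqref{lemma_formula_self-intersection3}; your route is slightly cleaner in that it exhibits~\eqref{lemma_formula_self-intersection2} as a formal consequence of the other two.
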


\begin{proof}
For \eqref{lemma_formula_self-intersection1} 
note the total transform $\pi_X^{*}(C_X)$ can always be written as
$\widehat{C}_X + m E_X$ for some $m \in \mathbb{Q}$. Considering its pull-back
under $H^{*}$ one obtains two expressions for the same $\mathbb{Q}$-divisor on
$\widehat{Y}$, 
$$
\begin{array}{l}
H^{*} ( \pi_X^{*} ( C_X ) ) \stackrel{\text{diagram}}{=}  \pi_Y^{*} ( h^{*} (
C_X ) ) \stackrel{\text{(b)}}{=} H^{*} (\widehat{C}_X) + \nu E_Y, \\[0.1cm]
H^{*} ( \widehat{C}_X + m E_X ) = H^{*}(\widehat{C}_X) + m H^{*} (E_X)
\stackrel{\text{(a)}}{=} H^{*} (\widehat{C}_X) + m e E_Y.
\end{array}
$$
It follows that $m = \frac{\nu}{e}$.

For \eqref{lemma_formula_self-intersection2} first note that $\deg (H) = \deg (h)$.
>From Lemma~\ref{lem-except-pb}, one has that $E_Y \cdot
\pi_Y^{*} ( h^{*} (C_X) ) = 0$. On the other hand, $H$ being proper,
Theorem~\ref{formula_pull-back}\eqref{formula_pull-back2} can be applied thus obtaining
$$
  \deg ( h )  ( E_X \cdot \widehat{C}_X ) = H^{*} (E_X) \cdot H^{*}
(\widehat{C}_X) \stackrel{\text{(a)-(b)}}{=} e E_Y \cdot \big[ \pi_Y^{*} ( h^{*}
(C_X) ) - \nu E_Y \big] = - e \nu E_Y^2.
$$
Analogously $\deg (h) E_X^2 = H^{*} (E_X)^2 = e^2 E_Y^2$ and 
\eqref{lemma_formula_self-intersection3} follows.
\end{proof}

Now we are ready to present the main result of this section.

\begin{prop}\label{formula_self-intersection}
Let $X$ be an analytic surface with abelian quotient singularities and let $\pi:
\widehat{X} \to X$ be the $(p,q)$-weighted blow-up at a point $P \in X$ of type
$(d;a,b)$. Assume $\gcd(p,q) = 1$ and $(d;a,b)$ is a normalized type,
i.e.~$\gcd(d,a) = \gcd(d,b)=1$. Also write $e = \gcd(d, pb-qa)$.

Consider two $\mathbb{Q}$-divisors $C$ and $D$ on $X$. As usual, denote by $E$
the exceptional divisor of $\pi$, and by $\widehat{C}$ (resp.~$\widehat{D}$) the
strict transform of $C$ (resp.~$D$). Let $\nu$ and $\mu$ be the
$(p,q)$-multiplicities of $C$ and $D$ at $P$, i.e.~$x$ (resp.~$y$) has
$(p,q)$-multiplicity $p$ (resp.~$q$). Then there are the following equalities:
\vspace{0.2cm}

\begin{minipage}[t]{0.45\linewidth}
\begin{enumerate}[\rm(1)]
\item\label{formula_self-intersection2} $\displaystyle \pi^{*}(C) = \widehat{C} + \frac{\nu}{e} E$. 
\item\label{formula_self-intersection3} $\displaystyle E \cdot \widehat{C} = \frac{e \nu}{d p q}$. 
\end{enumerate}
\end{minipage}
\hspace{0.5cm}
\begin{minipage}[t]{0.45\linewidth}
\begin{enumerate}[\rm(1)]
\setcounter{enumi}{2}
\item\label{formula_self-intersection4} $\displaystyle E^2 = - \, \frac{e^2}{dpq}$. 
\item\label{formula_self-intersection5} $\displaystyle \widehat{C} \cdot \widehat{D} = 
C \cdot D - \frac{\nu \mu}{dpq}$.
\end{enumerate}
\end{minipage}
In addition, if $D$ has compact support then $\displaystyle \widehat{D}^2 = D^2 - \frac{\mu^2}{dpq}$.
\end{prop}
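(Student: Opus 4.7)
The plan is to reduce the five identities to Lemma~\ref{lemma_formula_self-intersection} by working over the cover $Y=\C^{2}$. Let $h:\C^{2}\to X(d;a,b)$ be the natural quotient (of degree $d$) and $\pi_{Y}:\widehat{\C}^{2}_{(p,q)}\to\C^{2}$ the classical $(p,q)$-weighted blow-up on the smooth surface. Comparing the chart formulas in~\ref{weighted_blowup_dimN_smooth} and~\ref{blow-up2-sing-ab}, there is a natural finite proper morphism $H:\widehat{\C}^{2}_{(p,q)}\to\widehat{X(d;a,b)}_{(p,q)}$ defined in the first chart as the quotient map $X(p;-1,q)\to X(p;-1,q;pd;a,pb-qa)$, with the analogous description in the second chart, and the obvious diagram commutes. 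A direct computation of the kernel of the induced $\mu_{pd}$-action on $X(p;-1,q)$, using the normalization $\gcd(d,a)=\gcd(d,b)=1$, shows this kernel has order $p$, so $\deg(H)=d=\deg(h)$.

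The two hypotheses of Lemma~\ref{lemma_formula_self-intersection} must then be checked. For $H^{*}(E_{X})=e\,E_{Y}$, I pass to the normalized chart $Q_{1}=X(pd/e;1,(-q+\beta pb)/e)$, use the $\mathbb{Q}$-Cartier description of $E_{X}$ from Example~\ref{how_to_write_example}, and compare with the expression $E_{Y}=\frac{1}{p}\{(X(p;-1,q),x^{p})\}$ coming from~\ref{how_to_write_summarize}. Equivalently, the stabilizer of a generic point of $E_{Y}$ under the effective $\mu_{d}$-action on $X(p;-1,q)$ has order $e=\gcd(d,pb-qa)$, giving ramification index $e$. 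For the second hypothesis $\pi_{Y}^{*}(h^{*}(C))=H^{*}(\widehat{C})+\nu E_{Y}$, I observe that $h^{*}(C)$ on the smooth surface $\C^{2}$ is defined by a reduced function with the same $(p,q)$-multiplicity $\nu$ as $C$, so the standard formula for weighted blow-ups of smooth surfaces gives $\pi_{Y}^{*}(h^{*}(C))=\widehat{h^{*}(C)}+\nu E_{Y}$; since $H$ coincides with $h$ away from exceptional divisors, $\widehat{h^{*}(C)}=H^{*}(\widehat{C})$ as Weil divisors.

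Before applying the lemma, the smooth base case $E_{Y}^{2}=-\frac{1}{pq}$ is needed. I obtain this by taking $D'=\{x=0\}\subset\C^{2}$, which has $(p,q)$-multiplicity $p$; Lemma~\ref{lem-except-pb} gives $\pi_{Y}^{*}(D')\cdot E_{Y}=0$; expanding $\pi_{Y}^{*}(D')=\widehat{D'}+p\,E_{Y}$ and computing $\widehat{D'}\cdot E_{Y}$ as a local intersection number at the origin of the second chart $X(q;p,-1)$ via~\ref{computation_local_number_V-surface} gives $\widehat{D'}\cdot E_{Y}=\frac{1}{q}$, whence $E_{Y}^{2}=-\frac{1}{pq}$. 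With this in hand, Lemma~\ref{lemma_formula_self-intersection} produces~\eqref{formula_self-intersection2}, \eqref{formula_self-intersection3} and~\eqref{formula_self-intersection4}.

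For~\eqref{formula_self-intersection5}, I expand $\widehat{C}\cdot\widehat{D}$ after substituting $\widehat{C}=\pi^{*}(C)-\frac{\nu}{e}E$ and $\widehat{D}=\pi^{*}(D)-\frac{\mu}{e}E$ by bilinearity (Proposition~\ref{all_properties_inter_number}\eqref{all_properties_inter_number1}); the cross terms $E\cdot\pi^{*}(C)$ and $E\cdot\pi^{*}(D)$ vanish by Lemma~\ref{lem-except-pb}; Theorem~\ref{formula_pull-back}\eqref{formula_pull-back2} with $\deg(\pi)=1$ gives $\pi^{*}(C)\cdot\pi^{*}(D)=C\cdot D$; and substituting $E^{2}=-e^{2}/(dpq)$ from~\eqref{formula_self-intersection4} yields the stated identity. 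The closing claim is the specialization $D=C$, where compactness of $D$ is precisely what guarantees $D^{2}$ is defined. The main obstacle is the verification of $H^{*}(E_{X})=e\,E_{Y}$: this is where the arithmetic quantity $e=\gcd(d,pb-qa)$ enters, and it requires careful bookkeeping of $\mathbb{Q}$-Cartier representatives across the non-normalized chart $\widehat{U}_{1}$ and its normalization $Q_{1}$.
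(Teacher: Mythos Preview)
Your argument is correct, but the paper chooses a different cover in applying Lemma~\ref{lemma_formula_self-intersection}. Instead of the natural $\mu_d$-quotient $h:\C^2\to X(d;a,b)$ together with the weighted blow-up $\pi_Y:\widehat{\C}^2_{(p,q)}\to\C^2$, the paper takes $h(x,y)=[(x^p,y^q)]_{(d;a,b)}$ of degree $dpq$ and lets $\pi_Y$ be the \emph{classical} blow-up of $\C^2$; the lift is $H((x,y),[u:v])=[((x^p,y^q),[u^p:v^q]_\omega)]_{(d;a,b)}$. With that choice $E_Y^2=-1$ is the standard fact~\ref{classic_blow-ups}, so the proposition follows in one step from the lemma. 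Your route factors the reduction: first pass from $X(d;a,b)$ to the smooth case via the $\mu_d$-quotient, then establish $E_Y^2=-\tfrac{1}{pq}$ by the direct curvette computation at the $(q;p,-1)$ point. The paper's choice avoids this separate base case and keeps the verification of~(a) a purely formal pull-back of the $\mathbb{Q}$-Cartier expression in Example~\ref{how_to_write_example}; your choice is arguably more conceptual (the group acting is literally $\mu_d$, and the ramification index $e=\gcd(d,pb-qa)$ appears transparently as the generic stabilizer on $E_Y$), at the cost of the extra bootstrap. Both land on the same numbers since $\tfrac{e^2}{d}\cdot(-\tfrac{1}{pq})=\tfrac{e^2}{dpq}\cdot(-1)$.
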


\begin{proof}
The item~\eqref{formula_self-intersection5}, and final conclusion,
are an easy consequence of \eqref{formula_self-intersection2}-\eqref{formula_self-intersection4} 
and the fact that $\pi^{*} (C) \cdot \pi^{*} (D) = C \cdot D$.

For the rest of the proof, one assumes that $\pi := \pi_{X}:
\widehat{X(d;a,b)}_{\w} \longrightarrow X(d;a,b)$ is the weighted blow-up at
the origin of $X(d;a,b)$ with respect to $\w = (p,q)$. Now the idea is to apply
Lemma~\ref{lemma_formula_self-intersection} to the commutative diagram
$$
\begin{array}{cc}
\xymatrix{
\ar @{} [dr] | {\#}
\hspace{-0.75cm} \widehat{Y} := \widehat{\C}^2 \ar[r]^<<<<<{H} \ar[d]_{\pi_Y} &
\widehat{X(d;a,b)}_{\w}  =: \widehat{X}  \hspace{-1.75cm} \ar[d]^{\pi_X} \\
\hspace{-0.75cm} Y := \C^2 \ar[r]_<<<<<{h} & X(d;a,b) =: X \hspace{-1.75cm}
}
\end{array}
$$
where $H$ and $h$ are the morphisms defined by 
$$
\begin{array}{ccc}
((x,y),[u:v]) & \stackrel{H}{\longmapsto} &
[((x^{p},y^{q}),[u^{p}:v^{q}])_{\w}]_{(d;a,b)}; \\[0.2cm]
(x,y) & \stackrel{h}{\longmapsto} & [(x^{p},y^{q})]_{(d;a,b)},
\end{array} 
$$
and $\pi_Y$ is the classical blowing-up at the origin. In this situation
$E_Y^2=-1$. The claim is reduced to the calculation of $\deg(h)$ and the
verification of the conditions 
\eqref{lemma_formula_self-intersection-a}-\eqref{lemma_formula_self-intersection-b} 
of Lemma~\ref{lemma_formula_self-intersection}.

The degree is $\deg(h) = pq \cdot \deg \left[ \pr: \C^2 \to X(d;a,b) \right] =
dpq$. For~\eqref{lemma_formula_self-intersection-a}, first recall the decompositions
\begin{equation}\label{decomp_blowup_Xdabw}
\widehat{X(d;a,b)}_{\w} = \widehat{U}_1 \cup \widehat{U}_2, \qquad
\widehat{\C}^2 = U_1 \cup U_2.
\end{equation}
By Example~\ref{how_to_write_example}, one writes the exceptional divisor of
$\pi_X$ as 
$$
E_X = \frac{e}{dpq} \left\{ (\widehat{U}_1, x^{dq}), (\widehat{U}_2, y^{dp})
\right\}.
$$
Hence its pull-back under $H$, computed by pulling back the local equations, is
$$
H^{*} (E_X) = \frac{e}{dpq}\, \Big\{ (U_1, x^{dpq}), (U_2, y^{dpq}) \Big\} = e\,
\Big\{ (U_1,x), (U_2,y) \Big\} = e E_Y.
$$
Finally for \eqref{lemma_formula_self-intersection-b} one uses local equations to check $\pi^{*}_Y ( h^{*} (C)) =
H^{*} ( \widehat{C} ) + \nu E_Y$. Suppose the divisor $C$ is locally given by a
meromorphic function $f(x,y)$ defined on a neighborhood of the origin of
$X(d;a,b)$; note that $\nu = \ord_{(p,q)} (f)$. The charts associated with the
decompositions~(\ref{decomp_blowup_Xdabw}) are described in detail in
\ref{blow-up2-sing-ab}. As a summary we recall here the first chart of
each blowing-up:
$$
\begin{array}{c|rcl}
\pi_X & Q_1 := X \left( \begin{array}{c|cc} p & -1 & q \\ pd & a & pb-qa
\end{array} \right) & \longrightarrow & \widehat{U}_1, \\[0.5cm] & \big[ (x,y)
\big] & \mapsto & \big[ ((x^p, x^q y), [1:y]_{\w}) \big]. \\
\multicolumn{4}{c}{} \\
\pi_Y & \C^2 & \longrightarrow & U_1, \\[0.15cm] & (x,y) & \mapsto &
((x,xy),[1:y]).
\end{array}
$$
Note that $H$ respects the decompositions and takes the form $(x,y) \mapsto
[(x,y^q)]$ in the first chart. Then one has the following local equations for
the divisors involved:
$$
\renewcommand{\arraystretch}{1.25}
\begin{array}{c|c|c} 
\text{\bf Divisor} & \text{\bf Equation} & \text{\bf Ambient space} \\ \hline
h^{*} (C) & f(x^p,y^q) = 0 & \C^2 \\[0.1cm] 
\pi_Y^{*} (h^{*}(C)) & f(x^p, x^q y^q) = 0 & \C^2 \cong U_1 \\[0.1cm] 
\widehat{C} & \displaystyle \frac{f(x^p, x^q y)}{x^\nu} = 0 & Q_1 \cong \widehat{U}_1 \\[0.2cm] 
H^{*}( \widehat{C} ) & \displaystyle \frac{f(x^p,x^q y^q)}{x^\nu} = 0 & \C^2 \cong U_1 \\[0.2cm] 
E_Y & x=0 & \C^2 \cong U_1 \\ 
\end{array}
$$

>From these local equations \eqref{lemma_formula_self-intersection-b} 
is satisfied and now the proof is complete.
\end{proof}

\begin{nothing} Let us discuss two special cases of
Proposition~\ref{formula_self-intersection} when $P \in X$ is smooth and the point $P$ is of type $(d;p,q)$ with
$\gcd(d,p) = \gcd(d,q) = 1$. Consider the weighted blow-up 
$\pi := \pi_{\w}: \widehat{\C}^2_{\w} \longrightarrow \C^2$
(resp.  $\pi: = \pi_{\w,d}: \widehat{\C}^2_{\w,d} \to X(d;p,q)$).
The following properties hold:
\begin{enumerate} 
\item $E \cdot \pi^{*}(C) = 0$ (in both cases).
\item $\pi^{*}(C) = \widehat{C} + \nu E$ (resp. $\pi^{*} (C) = \widehat{C} + \frac{\nu}{d} E$).
\item $E \cdot \widehat{C} = \frac{\nu}{p q}$ (in both cases).
\item $E^2 = - \frac{1}{pq}$ (resp. $E^2 = - \frac{d}{pq}$).
\item $\widehat{C} \cdot \widehat{D} = C \cdot D - \frac{\nu \mu}{pq}$ 
(resp. $\widehat{C} \cdot \widehat{D} = C \cdot D - \frac{\nu \mu}{dpq}$).
\end{enumerate}

%
%

\end{nothing}

\begin{ex}
We compute now the self-intersection of the divisors in Examples~\ref{example_2} and~\ref{example_3}. 
After the first blow-up (of type $(q,p)$ over a smooth point) the divisor $\E_1$ in Example~\ref{example_2} 
has self-intersection~$\frac{-1}{p q}$.
Let us consider the second blow-up, of type $(p,q^2-p^2)$ over a point
of type $(q;p,q^2-p^2)$; the exceptional divisor is $\E_2$ and its self-intersection is $-\frac{q}{p(q^2-p^2)}$.
The strict transform of $\E_1$ has multiplicity $p$ and hence its self-intersection is
$\frac{-1}{p q}-\frac{p}{q(q^2-p^2)}=-\frac{q}{p(q^2-p^2)}$, as it should be from the symmetry of the equation.

Let us consider now Example~\ref{example_3}. The first blow-up is the same as above. The second one 
is of type $(s,r q -p s)$ over a point of type $(q;-1,p)$. The self-intersection of $\E_2$ is
$-\frac{q}{s(r q -p s)}$. The strict transform of $\E_1$ has multiplicity $s$ and hence its self-intersection is
$\frac{-1}{p q}-\frac{s}{q(r q-p s)}=-\frac{r}{p(r q-p s)}$.
\end{ex}

\begin{ex}
Let us consider the following divisors on $\C^2$,
$$
C_1  = \{ ((x^3-y^2)^2-x^4y^3) = 0 \}, \quad		
C_2  = \{ x^3-y^2= 0 \}, 
$$
$$
C_3  = \{ x^3+y^2 = 0 \}, \quad
C_4  = \{ x = 0 \}, \quad
C_5  = \{ y=0 \}.
$$

We shall see that the local intersection numbers $(C_i \cdot C_j)_{0}$, $i,j \in
\{ 1,\ldots,5 \}$, $i\neq j$, are encoded in the intersection matrix associated
with any embedded $\Q$-resolution of $C = \bigcup_{i=1}^5 C_i$.

Let $\pi_1: \C^2_{(2,3)} \to \C^2$ be the $(2,3)$-weighted blow-up at the
origin. The new space has two cyclic quotient singular points of type $(2;1,1)$
and $(3;1,1)$ located at the exceptional divisor~$\mathcal{E}_1$. The local
equation of the total transform in the first chart is given by the function
$$
x^{29} \ ((1-y^2)^2-x^5 y^3) \ (1-y^2) \ (1+y^2) \ y: \, X(2;1,1)
\longrightarrow \C, 
$$
where $x=0$ is the equation of the exceptional divisor and the other factors
correspond in the same order to the strict transform of $C_1$, $C_2$, $C_3$,
$C_5$ (denoted again by the same symbol). To study the strict transform of $C_4$
one needs the second chart, the details are left to the reader.

Hence $\mathcal{E}_1$ has multiplicity $29$ and self-intersection number
$-\frac{1}{6}$; the divisor intersects transversally $C_3$, $C_4$ and $C_5$ at three
different points, while it intersects $C_1$ and $C_2$ at the same smooth
point~$P$, different from the other three. The local equation of the divisor
$\mathcal{E}_1 \cup C_2 \cup C_1$ at this point $P$ is $\, x^{29} \, y \,
(x^5-y^2) = 0$, see Figure~\ref{fig1} below.

\begin{figure}[ht]
\centering
\includegraphics{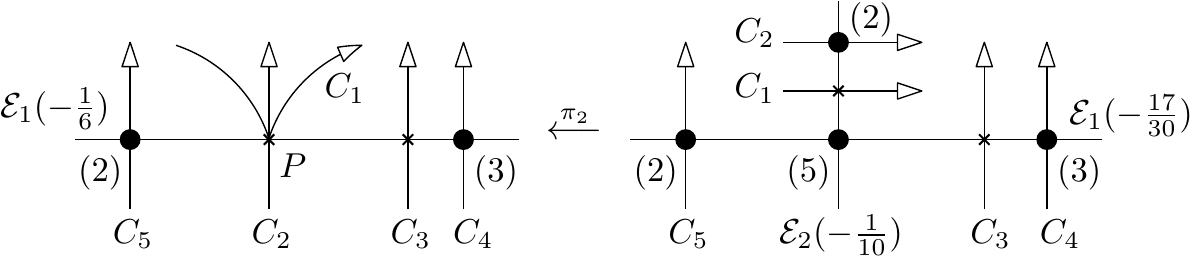}
\caption{Embedded $\Q$-resolution of $C = \bigcup_{i=1}^5 C_i \subset
\C^2$.}\label{fig1}
\end{figure}

Let $\pi_2$ be the $(2,5)$-weighted blow-up at the point $P$ above. The new
ambient space has two singular points of type $(2;1,1)$ and $(5;1,2)$. The local
equations of the total transform of $\mathcal{E}_1 \cup C_2 \cup C_1$ are given
by the functions in Table~\ref{tbl-function}.

\begin{table}[ht]
\begin{center}\renewcommand{\arraystretch}{1.5} 
\begin{tabular}{|c|} \hline
1st chart \\ \hline 
$\underbrace{x_{}^{73}}_{\mathcal{E}_2} \cdot \underbrace{y}_{C_2} \cdot \
\underbrace{(1-y^2)}_{C_1} \, : \, X(2;1,1) \longrightarrow \C$ \\ \hline
\end{tabular}
\begin{tabular}{|c|} \hline
2nd chart \\ \hline 
$\underbrace{x_{}^{29}}_{\mathcal{E}_1} \cdot
\underbrace{y^{73}}_{\mathcal{E}_2} \cdot \ \underbrace{(x^5_{}-1)}_{C_1} \, :
\, X(2;1,1) \longrightarrow \C$ \\ \hline
\end{tabular}
\end{center}
\caption{Equations of the total transform}
\label{tbl-function}
\end{table}
Thus the new exceptional divisor $\mathcal{E}_2$ has multiplicity $73$ and it
intersects transversally the strict transform of $C_1$, $C_2$ and
$\mathcal{E}_1$. 
Hence the composition $\pi_2 \circ \pi_1$ is an embedded $\Q$-resolution of $C =
\bigcup_{i=1}^5 C_i \subset \C^2$. Figure~\ref{fig1} above illustrates the whole
process.

As for the self-intersection numbers, $\mathcal{E}_2^2 = - \frac{1}{10}$ and
$\mathcal{E}_1^2 = - \frac{1}{6} - \frac{2^2}{1\cdot 2 \cdot 5} = -
\frac{17}{30}$.
The intersection matrix associated with the embedded $\Q$-resolution obtained
and its opposite inverse are
 $$
 A = \left( \begin{matrix} - 17/30 & 1/5 \\ 1/5 & - 1/10 \end{matrix} \right),
\qquad B = -A^{-1} = \left( \begin{matrix} 6 & 12 \\ 12 & 34 \end{matrix}
\right).
 $$

Now one observes that the intersection number is encoded in $B$ as follows. For
$i = 1,\ldots, 5$, 
set $k_i\in\{1,\ldots,5\}$ such that $\emptyset \neq C_i \cap \mathcal{E}_{k_i}
=: \{ P_i \}$. Denote by $d(C_i)$ the index of $P_i$, see Definition \ref{defiindex}.
Then,
$$
\left( C_i \cdot C_j \right)_0 = \frac{b_{k_i, k_j}}{d(C_i) \, d(C_j)}.
$$
One has $(k_1, \ldots, k_5) = (2,2,1,1,1)$ and $( d(C_1), \ldots, d(C_5)) =
(1,2,1,3,2)$.
Hence, for instance, $$( C_1 \cdot C_2 )_0  = \frac{b_{k_1, k_2}}{d(C_1) \,
d(C_2)} = \frac{b_{22}}{1 \cdot 2} = \frac{34}{2} = 17,$$ which is indeed the
intersection multiplicity at the origin of $C_1$ and $C_2$. Analogously for the
other indices.
\end{ex}


\begin{remark}
Consider the group action of type $(5;2,3)$ on $\C^2$. The previous plane curve
$C$ is invariant under this action and then it makes sense to compute an
embedded $\Q$-resolution of $\overline{C}:=C/\mu_5 \subset X(5;2,3)$. Similar calculations as in the previous example, lead to a figure as  the one obtained above with the following relevant differences: 
\begin{itemize}
\item $\mathcal{E}_1 \cap \mathcal{E}_2$ is a smooth point. \vspace{0.1cm}
\item $\mathcal{E}_1$ (resp.~$\mathcal{E}_2$) has self-intersection number
$-\frac{17}{6}$ (resp.~$-\frac{1}{2}$).
\item The intersection matrix is $A' = \left(\begin{smallmatrix} -17/6 & 1 \\ 1
& -1/2 \end{smallmatrix}\right)$ and its opposite inverse is $B' = -(A')^{-1} =
\left(\begin{smallmatrix} 6/5 & 12/5 \\ 12/5 & 34/5 \end{smallmatrix}\right)$.
\end{itemize}
Hence, for instance, $(\overline{C}_1 \cdot \overline{C}_2)_0 = \frac{b'_{22}}{1
\cdot 2} = \frac{34/5}{2} = \frac{17}{5}$, which is exactly the intersection
number of the two curves, since that local number can be also computed as
$(\overline{C}_1 \cdot \overline{C}_2)_0 = \frac{1}{5} (C_1 \cdot C_2)_0$.  
\end{remark}

The previous results correspond to  Mumford's definition~\cite{Mumford61}.
Let us fix $X:=X(d;a,b)$ and let us consider $\pi:\hat{X}\to X$ a sequence
of weighted blow-ups. Let $\E_1,\dots,\E_r$ be the set of exceptional components
and let $A:=\left(\E_i\cdot\E_j\right)_{1\leq i,j\leq r}$ be the intersection matrix in $\hat{X}$;
it is a negative definite matrix with rational coefficients. We may restrict $X$ to a small neighborhood of the origin.
An $\hat{X}$-curvette $\gamma_i$ of $\E_i$ is a Weil divisor obtained by considering a disk transversal to 
a  point of $\E_i\setminus\bigcup_{j\neq i}\E_j$ and $\delta_i=\pi(\gamma_i)$ is called an $X$-curvette of $\E_i$;
the index $d(\gamma_i):=d(\delta_i$) is the order of the cyclic group associated with $\gamma_i\cap\E_i$.
We say that $(\gamma_i,\gamma'_j)$ form a pair of $\hat{X}$-curvettes for $(\E_i,\E_j)$ if
they are disjoint curvettes for each divisor; in that case their images in $X$ form
a pair $(\delta_i,\delta'_j)$ $X$-curvettes.

\begin{prop}\label{prop_intmat}
Let $B:=-A^{-1}=\left(b_{i j}\right)_{1\leq i,j\leq r}$. Let  $(\delta_i,\delta'_j)$ be a pair of $X$-curvettes for $(\E_i,\E_j)$.
Then, $\delta_i\cdot\delta'_j=\frac{b_{i j}}{d(\delta_i) d(\delta'_j)}$.
\end{prop}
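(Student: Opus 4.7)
The plan is to compute $\delta_i\cdot\delta'_j$ upstairs on $\hat X$, where the geometry is controlled entirely by the matrix $A$ and by the elementary local intersections of the curvettes with the exceptional divisors. Decompose the total transform as
$$
\pi^{*}(\delta_i) \;=\; \gamma_i + \sum_{k=1}^{r}\alpha_{ik}\,\E_k, \qquad \alpha_{ik}\in\Q.
$$
This is legitimate because $\delta_i$ is $\Q$-Cartier on the germ $(X,0)$ by Theorem~\ref{cartier_weil_XdA}: some integer multiple of $\delta_i$ is principal on $X$, hence the corresponding multiple of $\pi^{*}(\delta_i)$ is principal on $\hat X$, and Proposition~\ref{all_properties_inter_number}\eqref{all_properties_inter_number5} applied to each compact $\E_k$ gives the key identity $\pi^{*}(\delta_i)\cdot\E_k=0$ for $k=1,\dots,r$.

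The curvette $\gamma_i$ is disjoint from $\E_j$ for $j\neq i$ and meets $\E_i$ transversally at a single point whose ambient germ in $\hat X$ is of normalized type $(d(\delta_i);\ast,\ast)$ with $\E_i$ a coordinate axis; the local calculation of~\S\ref{computation_local_number_V-surface} then yields $\gamma_i\cdot\E_i=1/d(\delta_i)$ and $\gamma_i\cdot\E_k=0$ for $k\neq i$. Intersecting the decomposition above with each $\E_k$ therefore gives the linear system $A\,\alpha_i^{\mathsf{T}}=-\tfrac{1}{d(\delta_i)}\,e_i^{\mathsf{T}}$, whose unique solution (using $B=-A^{-1}$) is $\alpha_{ik}=b_{ik}/d(\delta_i)$.

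Since $\pi$ is birational ($\deg\pi=1$) and $|\delta_i|\cap|\delta'_j|\subset\{0\}$, Theorem~\ref{formula_pull-back}\eqref{formula_pull-back3} yields
$
\delta_i\cdot\delta'_j = \sum_{Q\in\pi^{-1}(0)}\bigl(\pi^{*}(\delta_i)\cdot\pi^{*}(\delta'_j)\bigr)_Q.
$
Expanding the right-hand side by bilinearity, using $\pi^{*}(\delta_i)\cdot\E_\ell=0$ to cancel the pure exceptional contributions against the $\gamma_i\cdot\E_\ell$ terms, and using the disjointness $\gamma_i\cap\gamma'_j=\emptyset$, one collapses this to
$$
\delta_i\cdot\delta'_j \;=\; \sum_{k}\alpha_{ik}\,(\E_k\cdot\gamma'_j) \;=\; \frac{\alpha_{ij}}{d(\delta'_j)} \;=\; \frac{b_{ij}}{d(\delta_i)\,d(\delta'_j)}.
$$
The main technical obstacle is justifying $\pi^{*}(\delta_i)\cdot\E_k=0$ rigorously in this germ setting where the ambient surface is not compact: one must exploit the $\Q$-Cartier property locally so that $\pi^{*}(\delta_i)$ becomes $\Q$-principal on a neighborhood of the compact curve $\E_k$, after which Proposition~\ref{all_properties_inter_number}\eqref{all_properties_inter_number5} closes the argument.
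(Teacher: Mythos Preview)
Your proof is correct and follows essentially the same route as the paper's: decompose $\pi^{*}(\delta_i)=\gamma_i+\sum_k\alpha_{ik}\E_k$, use $\E_k\cdot\pi^{*}(\delta_i)=0$ (Lemma~\ref{lem-except-pb}) to determine the $\alpha_{ik}$ from the matrix $A$, and then expand $\pi^{*}(\delta_i)\cdot\pi^{*}(\delta'_j)$. The only cosmetic difference is that the paper first reduces to curvettes through smooth points (so that $\gamma_i\cdot\E_k=\delta_{ik}$) by observing that a generic curvette is linearly equivalent to $d(\gamma_i)\gamma_i$, whereas you work directly with arbitrary curvettes via the local computation $\gamma_i\cdot\E_i=1/d(\delta_i)$ from~\S\ref{computation_local_number_V-surface}; your version is arguably cleaner.
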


\begin{proof}
Let $\gamma'_i$ be a generic $\hat{X}$-curvette. Since $\gamma'_i$ and
$d(\gamma_i)\gamma_i$ are equivalent Weil divisors, we can assume
that $d(\gamma_i)=1$. We have $\pi^*(\delta_i)=\gamma_i+\sum_{j=1}^n c_{i j} \E_j$.
Note that $\gamma_i\cdot\E_j=\delta_{ij}$ ($\delta_{ij}$ being the Kronecker delta).

For a generic $\gamma'_j$ we have $\delta'_j\cdot\delta_i=\pi^*(\delta'_j)\cdot\pi^*(\delta_i)=
\gamma'_j\cdot\pi^*(\delta_i)=c_{i j}$.
Since
$$
\delta_{ik}=\gamma_i\cdot\E_k= \big{(}\pi^*(\delta_i)-\sum_{j=1}^n c_{i j} \E_j\big{)}\cdot \E_k=
- \sum_{j=1}^n (\delta_i\cdot\delta'_j) (\E_j\cdot \E_k),
$$ 
we deduce the result.
\end{proof}

\section{B{\'e}zout's Theorem for Weighted Projective Planes}

For a given weight vector $\w = (p,q,r) \in \N^3$ and an action on~$\C^3$ of
type $(d;a,b,c)$, consider the quotient weighted projective plane
$\bP^2_{\w}(d;a,b,c) := \bP^2_{\w} / \mu_d$ and the projection morphism
$\tau_{(d;a,b,c),\w}: \bP^2 \to \bP^2_{\w}(d;a,b,c)$ defined by
\begin{equation}\label{defi_tau}
\tau_{(d;a,b,c),\w} ( [x: y: z] ) = [x^p: y^q: z^r]_{\w,d}.
\end{equation}

The space $\bP^2_{\w}(d;a,b,c)$ is a variety with abelian quotient singularities;
its charts are obtained as in Section~\ref{weighted_projective_space}. The {\em
degree of a $\mathbb{Q}$-divisor on $\bP^2_{\w}(d;a,b,c)$} is the degree of its
pull-back under the map $\tau_{(d;a,b,c),\w}$, that is, by definition,
$$
  D \in \text{$\mathbb{Q}$-$\Div \left( \bP^2_{\w}(d;a,b,c) \right)$}, \quad
\deg_{\w} (D) := \deg \left( \tau_{(d;a,b,c),\w}^{*} (D) \right).
$$
Thus if $D = \{ F = 0 \}$ is a $\mathbb{Q}$-divisor on $\bP^2_{\w}(d;a,b,c)$
given by a $\w$-homogeneous polynomial that indeed defines a zero set on the
quotient projective space, then $\deg_{\w} (D)$ is the classical degree, denoted
by $\deg_{\w} (F)$, of the quasi-homogeneous polynomial.

%
%

The following result can be stated in a more general setting. However, it is
presented in this way to keep the exposition as simple as possible.

\begin{lemma}\label{computing_number_of_sheets}
The degree of the projection $\pr: \C^2 \longrightarrow X \big(
\begin{smallmatrix} d \, ; & a & b \\ e \, ; & r & s \end{smallmatrix} \big)$ is
given by the formula
$$
\displaystyle \frac{d \cdot e}{\gcd \big[ d \cdot \gcd(e,r,s),\ e \cdot
\gcd(d,a,b),\ as-br \big]}.
$$
\end{lemma}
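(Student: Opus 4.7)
The plan is to reduce the degree computation to counting the image of the action homomorphism. For a generic point $(x_0,y_0)\in(\C^*)^2$, the stabilizer in $G:=\mu_d\times\mu_e$ is the kernel $K$ of the homomorphism
$$
\phi:G\longrightarrow(\C^*)^2,\qquad (\xi,\eta)\longmapsto(\xi^a\eta^r,\,\xi^b\eta^s),
$$
and this stabilizer does not depend on the chosen generic point. Since the fiber of $\pr$ over such a generic orbit has exactly $|G/K|$ elements, we get $\deg(\pr)=|G/K|=|\operatorname{image}(\phi)|$.

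The next step is to translate the question into a lattice calculation. Using $\mu_n\cong\Z/n$ and the exponential isomorphism on the target, $\phi$ becomes the map $(\alpha,\beta)\mapsto\bigl(\tfrac{ae\alpha+rd\beta}{de},\tfrac{be\alpha+sd\beta}{de}\bigr)$ from $\Z/d\oplus\Z/e$ into $(\Q/\Z)^2$, whose image lies in $(\tfrac{1}{de}\Z/\Z)^2\cong(\Z/de)^2$. Therefore $|\operatorname{image}(\phi)|$ equals the size of the image in $(\Z/de)^2$ of the $\Z$-linear map $\Z^2\to\Z^2$ represented by
$$
M'=\begin{pmatrix}ae&rd\\ be&sd\end{pmatrix},
$$
which equals $(de)^2/[\Z^2:N]$, where $N:=M'\Z^2+de\Z^2$.

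To handle $[\Z^2:N]$, observe that $N$ is generated by the columns of the $2\times 4$ matrix $(M'\mid de\,I_2)$. By Smith normal form, the cokernel $\Z^2/N$ has order equal to the gcd of all $2\times 2$ minors of this matrix, and a direct computation yields, up to sign, the six minors
$$
de(as-br),\quad ade^2,\quad bde^2,\quad rd^2e,\quad sd^2e,\quad d^2e^2.
$$

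The final step, which is the main algebraic obstacle, is to simplify $\gcd$'s after factoring $de$ out: one must show that
$$
\gcd\bigl(ae,\,be,\,rd,\,sd,\,de\bigr)=\gcd\bigl(e\gcd(d,a,b),\,d\gcd(e,r,s)\bigr).
$$
This follows from the identities $\gcd(e\gcd(a,b),de)=e\gcd(d,a,b)$ and $\gcd(d\gcd(r,s),de)=d\gcd(e,r,s)$ together with the splitting $\gcd(x,y,z)=\gcd(\gcd(x,z),\gcd(y,z))$ applied with $z=de$. Plugging this into $[\Z^2:N]=de\cdot\gcd(as-br,e\gcd(d,a,b),d\gcd(e,r,s))$ and then into $\deg(\pr)=(de)^2/[\Z^2:N]$ gives the stated formula.
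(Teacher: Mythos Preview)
Your proof is correct, but it follows a genuinely different route from the paper's. The paper first reduces to the case $\gcd(d,a,b)=\gcd(e,r,s)=1$ (claiming the general formula follows easily from this), then computes $|K|$ directly by an elementary argument with roots of unity: raising the equations $\xi^a\eta^r=1$, $\xi^b\eta^s=1$ to the $e$-th and $d$-th powers forces $\xi,\eta\in\mu_{\gcd(d,e)}$, whence the problem becomes counting solutions to a $2\times 2$ linear system over $\Z/\gcd(d,e)$, which is $\gcd(d,e,as-br)$. Your approach instead computes $|G/K|$ via Smith normal form of the $2\times4$ matrix $(M'\mid de\,I_2)$, handling the general case uniformly without a preliminary reduction. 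What the paper's method buys is that it avoids invoking Smith normal form and stays entirely elementary; what your method buys is a cleaner, coordinate-free treatment that does not require a separate reduction step or the slightly ad hoc observation that both $\xi$ and $\eta$ must lie in $\mu_{\gcd(d,e)}$.
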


\begin{proof}
Assume $\gcd(d,a,b) = \gcd(e,r,s) = 1$; the general formula is obtained easily
from this one.

The degree of the required projection $\C^2 \to X \big( \begin{smallmatrix} d \,
; & a & b \\ e \, ; & r & s \end{smallmatrix} \big)$ is $\frac{de}{\ell}$,
where~$\ell$ is the order of the abelian group
$$
H = \left\{ (\xi,\eta) \in \mu_d \times \mu_e \mid \xi^a \eta^r =1,\ \xi^b
\eta^s = 1 \right\} \lhd ( \mu_d \times \mu_e ).
$$

To calculate $\ell$, consider $(\xi, \eta) \in \mu_d \times \mu_e$ and solve the
system $\xi^a \eta^r = 1$, $\xi^b \eta^s = 1$.
Raising both equations to the $e$-th power, one obtains $\xi^{ae} = 1$ and
$\xi^{be} = 1$. Hence,
$$
\xi \in \mu_d \cap \mu_{ae} \cap \mu_{be} = \mu_{\gcd(d,ae,be)} =
\mu_{\gcd(d,e)}.
$$
Note that the assumption $\gcd(d,a,b)=1$ was used in the last equality.
Analogously, it follows that $\eta \in \mu_{\gcd(d,e)}$, provided that
$\gcd(e,r,s)=1$.

Thus, there exist $i,j \in \{ 0,1,\ldots,\gcd(d,e)-1 \}$ such that $\xi =
\zeta^{i}$ and $\eta = \zeta^{j}$, where $\zeta$ is a fixed $(d,e)$-th primitive
root of unity. Now the claim is reduced to finding the number of solutions of the
system of congruences
$$
\left\{ \begin{array}{rcl}
a i + r j & \equiv & 0 \\
b i + s j & \equiv & 0
\end{array}\right. \ \big( \text{mod } \gcd(d,e) \big).
$$
This is known to be $\gcd(d,e,as-br)$ and the proof is complete.
\end{proof}

\begin{prop}\label{bezout_th_P2w-mu_d}
Using the notation above, let us denote by $m_1$, $m_2$, $m_3$ the determinants
of the three minors of order $2$ of the matrix $\big( \begin{smallmatrix} p & q
& r \\ a & b & c \end{smallmatrix} \big)$. 
Denote $e: = \gcd ( d, m_1, m_2, m_3 )$.

Then the intersection number of two $\mathbb{Q}$-divisors on
$\bP^2_{\w}(d;a,b,c)$ is
$$
D_1 \cdot D_2 = \frac{e}{dpqr} \deg_{\w}(D_1) \deg_{\w}(D_2) \in \mathbb{Q}.
$$

In particular, the self-intersection number of a $\mathbb{Q}$-divisor is given
by $D^2 = \frac{e}{dpqr} \deg_{\w} (D)^2$. Moreover, if $|D_1| \nsubseteq
|D_2|$, then $|D_1| \cap |D_2|$ is a finite set of points and
\begin{equation}\label{eq_bezout_th_P2w-mu_d}
\frac{e}{dpqr} \deg_{\w}(D_1) \deg_{\w}(D_2) = \sum_{P \in |D_1| \cap |D_2|}
(D_1 \cdot D_2)_P.
\end{equation}
\end{prop}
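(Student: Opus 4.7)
The strategy is to transfer the intersection computation from $\bP^2_{\w}(d;a,b,c)$ to $\bP^2$ through the finite cover $\tau:=\tau_{(d;a,b,c),\w}$ of~(\ref{defi_tau}) and invoke classical B\'ezout on $\bP^2$. By Theorem~\ref{formula_pull-back}\eqref{formula_pull-back2} applied to $\tau$ one has $\tau^{*}(D_1)\cdot\tau^{*}(D_2)=\deg(\tau)\,(D_1\cdot D_2)$, while~\ref{classic_bezout} together with the very definition $\deg_\w(D_i):=\deg(\tau^{*}D_i)$ yields $\tau^{*}(D_1)\cdot\tau^{*}(D_2)=\deg_\w(D_1)\deg_\w(D_2)$. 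Comparing these two expressions reduces the main formula to the single identity $\deg(\tau)=dpqr/e$.

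To compute this degree I would factor $\tau$ as $\bP^2\xrightarrow{\tau_1}\bP^2_\w\xrightarrow{\tau_2}\bP^2_\w(d;a,b,c)$, with $\tau_1([x:y:z])=[x^p:y^q:z^r]_\w$ and $\tau_2$ the $\mu_d$-quotient map. The first factor has degree $pqr$: a generic preimage of a point $[a_0:b_0:c_0]_\w$ with $a_0 b_0 c_0\neq 0$ is parametrized by a choice of $p$-th, $q$-th and $r$-th root of the three coordinates, i.e.\ by an element of $\mu_p\times\mu_q\times\mu_r$, and the only identification in $\bP^2$ comes from simultaneous rescaling by the diagonal subgroup $\mu_p\cap\mu_q\cap\mu_r=\mu_{\gcd(p,q,r)}=\{1\}$. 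The second factor has degree $d/|K|$, where $K\subset\mu_d$ is the kernel of the $\mu_d$-action on $\bP^2_\w$.

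The hard step is the identification $K=\mu_e$. An element $\xi\in\mu_d$ lies in $K$ iff there exists $t\in\C^{*}$ with $(\xi^a,\xi^b,\xi^c)=(t^p,t^q,t^r)$; raising each equality to the $d$-th power forces $t\in\mu_d$, so, writing everything additively in $\Z/d\Z$, the problem becomes the linear system $a\xi\equiv pt$, $b\xi\equiv qt$, $c\xi\equiv rt\pmod d$. Pairwise elimination of $t$ produces the three necessary conditions $m_i\xi\equiv 0\pmod d$ for $i=1,2,3$, whence $\xi\in\mu_e$ with $e=\gcd(d,m_1,m_2,m_3)$. Conversely, given such a $\xi$, a B\'ezout relation $\alpha p+\beta q+\gamma r=1$ lets me define $t:=(a\alpha+b\beta+c\gamma)\xi$; a direct calculation using $p\alpha=1-q\beta-r\gamma$ (and its symmetric variants) shows that $pt-a\xi$, $qt-b\xi$ and $rt-c\xi$ are $\Z$-linear combinations of $m_1\xi,m_2\xi,m_3\xi$ and therefore vanish modulo $d$. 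Hence $|K|=e$ and $\deg(\tau)=pqr\cdot d/e$, as required.

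This index computation is where the real obstacle lies; once $\deg(\tau)=dpqr/e$ is in hand the intersection formula follows immediately. The self-intersection statement is obtained by replacing one factor by a $\mathbb{Q}$-linearly equivalent divisor of the same $\w$-degree in general position and invoking Proposition~\ref{all_properties_inter_number}\eqref{all_properties_inter_number5}, and the global-to-local decomposition~(\ref{eq_bezout_th_P2w-mu_d}) is immediate from Remark~\ref{bezoutglolo}.
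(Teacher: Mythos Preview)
Your argument is correct and follows the same overall architecture as the paper: reduce via Theorem~\ref{formula_pull-back}\eqref{formula_pull-back2} and classical B\'ezout on $\bP^2$ to the single identity $\deg(\tau)=dpqr/e$, then verify that identity. The execution of the degree computation, however, is genuinely different. The paper localizes $\tau$ to the first chart, where it becomes $(y,z)\mapsto[(y^q,z^r)]$ from $\C^2$ to $X\big(\begin{smallmatrix} p\,; & q & r\\ pd\,; & m_1 & m_2\end{smallmatrix}\big)$, and then invokes the separate Lemma~\ref{computing_number_of_sheets} (an order computation for the isotropy subgroup of a product action $\mu_d\times\mu_e$ on $\C^2$) together with the relation $qm_2-rm_1=pm_3$ to extract the factor $e$. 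You instead factor $\tau$ globally as $\bP^2\to\bP^2_\w\to\bP^2_\w/\mu_d$, read off $\deg(\tau_1)=pqr$ from the standing hypothesis $\gcd(p,q,r)=1$, and identify $e$ directly as the order of the kernel of the $\mu_d$-action on $\bP^2_\w$ via an explicit B\'ezout relation on $(p,q,r)$. Your route is more geometric and self-contained (it gives a clean interpretation of $e$ and bypasses Lemma~\ref{computing_number_of_sheets} entirely); the paper's route fits its chart-based framework and reuses a lemma of independent interest. Both yield the same $\deg(\tau)=dpqr/e$, after which the self-intersection and local-global statements follow exactly as you indicate.
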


\begin{proof}
For simplicity, let us just write $\tau$ for the map defined in \eqref{defi_tau} omitting the subindex. Note that $\tau$
is a proper morphism between two irreducible $V$-manifolds of dimension $2$.
Thus by Theorem~\ref{formula_pull-back}\eqref{formula_pull-back2} and the classical B{\'e}zout's theorem on
$\bP^2$, see \ref{classic_bezout}, one has the following sequence of equalities, 
$$
\deg(\tau)\! (D_1 \cdot D_2)\! =\! \tau^{*}(D_1) \cdot \tau^{*}(D_2)\!  =\! \deg \left(
\tau^{*} ( D_1) \right) \deg \left( \tau^{*} (D_2) \right) = \deg_{\w}(D_1)
\deg_{\w}(D_2). 
$$
The rest of the proof is the computation of $\deg(\tau)$; the final part is a
consequence of Remark \ref{bezoutglolo}.

In the first chart $\tau$ takes the form $\C^2 \to X \big( \begin{smallmatrix} p
\hspace{0.21cm} ; & q & r \\ pd \, ; & m_1 & m_2 \end{smallmatrix} \big)$,
$(y,z) \mapsto [(y^q, z^r)]$. By decomposing this morphism into $\C^2 \to \C^2$,
$(y,z) \mapsto (y^q,z^r)$ and the natural projection $\C^2 \to X \big(
\begin{smallmatrix} p \hspace{0.21cm} ; & q & r \\ pd \, ; & m_1 & m_2
\end{smallmatrix} \big)$, $(y,z) \mapsto [(y, z)]$, one obtains
$$
\deg(\tau) = qr \cdot \deg \left[ \C^2 \stackrel{\pr}{\longrightarrow} X \big(
\begin{smallmatrix} p \hspace{0.21cm} ; & q & r \\ pd \, ; & m_1 & m_2
\end{smallmatrix} \big) \right].
$$
The determinant of the corresponding matrix is $q m_2 - r m_1 = p m_3$. From
Lemma~\ref{computing_number_of_sheets} the latter degree is
$$
  \frac{p \cdot pd}{\gcd \big( p \cdot \gcd(pd, m_1, m_2), pd, p m_3 \big)} =
  \frac{dp}{\gcd \big( d, m_1, m_2, m_3 \big)}, 
$$
and hence the proof is complete.
\end{proof}

\begin{cor}
Let $X$, $Y$, $Z$ be the Weil divisors on $\bP^2_{\w}(d;a,b,c)$ given by $\{ x=0
\}$, $\{ y=0 \}$ and $\{ z=0 \}$, respectively. Using the notation of
Proposition{\rm~\ref{bezout_th_P2w-mu_d}} one has:
\begin{enumerate}[\rm(1)]
\item $\displaystyle X^2 = \frac{ep}{dqr}$, \quad $\displaystyle Y^2 =
\frac{eq}{dpr}$, \quad $\displaystyle Z = \frac{er}{dpq}$.

\item $\displaystyle X \cdot Y = \frac{e}{dr}$, \quad $\displaystyle X \cdot Z =
\frac{e}{dq}$, \quad $\displaystyle Y \cdot Z = \frac{e}{dp}$. $\hfill \Box$
\end{enumerate}
\end{cor}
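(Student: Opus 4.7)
The plan is to read the corollary as a direct specialization of Proposition~\ref{bezout_th_P2w-mu_d}. Since that proposition expresses every intersection number as $\frac{e}{dpqr}\deg_{\w}(D_1)\deg_{\w}(D_2)$, all that remains is to identify the three quantities $\deg_{\w}(X)$, $\deg_{\w}(Y)$ and $\deg_{\w}(Z)$, and then substitute.

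First I would observe that the divisors $X$, $Y$, $Z$ are each cut out by a single coordinate, namely the $\w$-homogeneous polynomials $x$, $y$, $z$ of quasi-homogeneous degrees $p$, $q$, $r$ respectively. Each of these zero sets is visibly $\mu_d$-invariant under the diagonal action of type $(d;a,b,c)$, hence descends to a genuine Weil divisor on $\bP^2_{\w}(d;a,b,c)$. Applying the description of $\deg_{\w}$ via pullback along $\tau_{(d;a,b,c),\w}$, one has $\tau^{*}(X)=\{x^p=0\}\subset \bP^2$, a divisor of classical degree $p$; similarly $\tau^{*}(Y)$ and $\tau^{*}(Z)$ have classical degrees $q$ and $r$. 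Thus $\deg_{\w}(X)=p$, $\deg_{\w}(Y)=q$, $\deg_{\w}(Z)=r$.

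Finally I would substitute these into the B\'ezout-type formula of Proposition~\ref{bezout_th_P2w-mu_d}, which yields directly
$$X^2=\frac{ep^2}{dpqr}=\frac{ep}{dqr}, \qquad Y^2=\frac{eq^2}{dpqr}=\frac{eq}{dpr}, \qquad Z^2=\frac{er^2}{dpqr}=\frac{er}{dpq},$$
and analogously the three mixed products
$$X\cdot Y=\frac{epq}{dpqr}=\frac{e}{dr}, \qquad X\cdot Z=\frac{epr}{dpqr}=\frac{e}{dq}, \qquad Y\cdot Z=\frac{eqr}{dpqr}=\frac{e}{dp}.$$
There is no real obstacle: the only point worth double-checking is that the coordinate polynomial indeed defines a zero set on the quotient (handled by the $\mu_d$-invariance noted above) so that the remark following the definition of $\deg_{\w}$ applies, and that for the mixed products the hypothesis $|D_1|\nsubseteq|D_2|$ holds, which is clear since the three coordinate divisors are distinct irreducibles. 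The proof thus reduces to a one-line reference plus the arithmetic above.
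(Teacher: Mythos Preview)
Your proposal is correct and is exactly the intended argument: the paper marks the corollary with a $\Box$ and no proof, signalling it is an immediate consequence of Proposition~\ref{bezout_th_P2w-mu_d} once one knows $\deg_{\w}(X)=p$, $\deg_{\w}(Y)=q$, $\deg_{\w}(Z)=r$, which is precisely what you verify. One small remark: the hypothesis $|D_1|\nsubseteq|D_2|$ is only needed for the local decomposition~\eqref{eq_bezout_th_P2w-mu_d}, not for the global formula you are invoking, so that check is harmless but unnecessary here.
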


\begin{remark}

%

 If $d=1$ then $e = 1$ too and the formulas above become a bit simpler. In
particular, one obtains the classical B{\'e}zout's theorem on weighted projective
planes, (the last equality holds if $|D_1| \nsubseteq |D_2|$ only) \vspace{0.1cm}
$$
\qquad \quad \ D_1 \cdot D_2 = \frac{1}{pqr} \deg_{\w}(D_1) \deg_{\w}(D_2) =
\sum_{P \in |D_1| \cap |D_2|} (D_1 \cdot D_2)_P.
$$

%
%

\end{remark}


\begin{ex}
Let us take again Example~\ref{ex-hirzebruch1}. The exceptional component $E$
has self-intersection $-\frac{q}{p r}$. Since the curve $y=0$ does not pass through
$P$ its self-intersection is the one in $\bP^2_\w$, i.e.~$\frac{q}{p r}$. 
The fibers~$F$ of $\pi$ have self-intersection~$0$;
a generic fiber is obtained as follows. Consider a curve~$L$ of equation $x^r-z^p=0$ in $\bP^2_\w$.
Then $\pi^*L=F+\frac{pr}{q} E$ and $F^2=0$. The surface $\hat{X}_P$ looks like
a Hirzebruch surface of index $\frac{q}{p r}$. 
\end{ex}

\section{Application to Jung resolution method}

One of the main reasons to work with $\Q$-resolutions of singularities is the fact that they are
much simpler from the combinatorial point of view and they essentially provide
the same information as classical resolutions. In the case of embedded resolutions, there
are two main applications. One of them is concerned with the study of the Mixed Hodge Structure and the topology of the Milnor fibration, see~\cite{Steenbrink77}.
The other one is the Jung method to find abstract resolutions, see \cite{Jung51} 
and a modern exposition~\cite{Laufer71} by Laufer.

The study of the Mixed Hodge Structure is related to a process called the semistable resolution
which introduces abelian quotient singularities and $\mathbb{Q}$-normal crossing divisors.
The work of the second author in his thesis guarantees that one can substitute embedded resolutions by
embedded $\Q$-resolutions obtaining the same results.
As for the Jung method, we will explain the usefulness of $\Q$-resolutions at the time they are presented.

\begin{nothing}\textbf{Classical Jung Method.}
Let
$H\in(\C^{n+1},0)$ be a hypersuface singularity defined by a Weiestra{\ss} polynomial
$f(x_0,x_1,\dots,x_n)\in\C\{x_1,\dots,x_n\}[x_0]$. Let $\Delta\in\C\{x_1,\dots,x_n\}$
be the discriminant of $f$. We consider the projection $\pi:H\to(\C^n,0)$ which is an $n$-fold
covering ramified along $\Delta$. Let
$\sigma:X\to(\C^n,0)$ be an embedded resolution of the singularities of $\Delta$.
Let $\hat{X}$ be the pull-back of $\sigma$ and $\pi$. In general, this space has non-normal singularities. Denote by $\nu:\tilde{X}\to\hat{X}$ its normalization.
$$
\xymatrix{
\bar{X} \ar[r]^{\tau} & \tilde{X} \ar[r]^{\nu} \ar[d]_{\tilde{\pi}} 
\ar@/^0.5cm/[rr]^{\tilde{\sigma}} \ar@{}[rd]|{\#} & \hat{X} \ar[r] 
\ar[d] \ar@{}[rd]|{\#} & H \ar[d]^{\pi}\\
& X \ar@{=}[r] & X \ar[r]_{\sigma} & \C^n
}
$$
There are two mappings issued from $\tilde{X}$: $\tilde{\pi}:\tilde{X}\to X$ and
 $\tilde{\sigma}:\tilde{X}\to H$. The map
$\tilde{\pi}$ is an $n$-fold covering whose branch locus is contained in $\sigma^{-1}(\Delta)$.
In general, $\tilde{X}$ is not smooth, it has abelian quotient 
singular points over the (normal-crossing) singular points
of $\sigma^{-1}(\Delta)$. Consider $\tau:\bar{X}\to\tilde{X}$ the resolution of $\tilde{X}$,
see~\cite{Fujiki74}. Then $\tilde{\sigma}\circ\tau:\bar{X}\to H$
is a good abstract resolution of the singularities of $H$.
\end{nothing}

\begin{nothing}\textbf{Jung $\Q$-method.}
In the previous method, $\tilde{\sigma}$ is a $\Q$-resolution of $H$. This is why
replacing $\sigma$ by an embedded $\Q$-resolution is a good idea. First, 
the process to obtain an embedded $\Q$-resolution is much shorter; we can reproduce
the process above and the space $\tilde{X}$ obtained only has abelian quotient singularities
and the exceptional divisor has $\mathbb{Q}$-normal crossings, 
i.e.~$\tilde{\sigma}$ is an abstract $\Q$-resolution of $H$, usually simpler than
the one obtained by the classical method.

If anyway, one is really interested in a standard resolution of $H$, the most direct way
to find the intersection properties of the exceptional divisor of $\tilde{\sigma}\circ\tau$
is to study the $\mathbb{Q}$-intersection properties of the exceptional
divisor of $\tilde{\sigma}$ and construct $\tau$ as a composition of weighted-blow ups.
\end{nothing}

We explain this process more explicitly in the case $n=2$. 
After the pull-back and the normalization process, the preimage
of each irreducible divisor $\E$ of $\Delta$ is a (possibly non-connected)
ramified covering of $\E$. In order to avoid technicalities to
describe these coverings, we restrict our attention to the cyclic case,
i.e.~$H=\{z^n=f(x,y)\}$. 

In this case the reduced structure of $\Delta$ is the one of
$f(x,y)=0$. We consider the minimal $\Q$-resolution of $\Delta$, which is
obtained as a composition of weighted blow-ups following the Newton process,
see~\ref{puiseux}.

Let $\E$ be an irreducible component of $\sigma^{-1}(\Delta)$ with multiplicity $s:=m_\E$. 

\begin{nothing}\textbf{Generic points of $\E$.}
Consider
a generic point $p\in\E$ with local coordinates $(u,v)$ such that $v=0$ is $\E$ and
$(f\circ\sigma)(u,v)=v^{s}$. Note that $p$ has only one preimage in $\hat{X}$; $\bar{X}$ looks in
a neighborhood of this preimage like $\{(u,v,z)\in\C^3\mid z^n=v^s\}$. The normalization of this space
produces $\gcd(s,n)$ points which are smooth. Then, the preimage of $\E$ in $\tilde{X}$
is (possibly non-connected) $\gcd(s,n)$-sheeted cyclic covering ramified on the singular points
of $\E$ in $\sigma^{-1}(\Delta)$; the number of connected components
and their genus will be described later. Note also that $\tilde{X}$ is smooth over the smooth
part of $\E$ in $\sigma^{-1}(\Delta)$. 
\end{nothing}

\begin{remark}
In the general (non-cyclic) case, the local equations can be more complicated but
we always have that the preimage of $\E$ in $\tilde{X}$
is a possibly non-connected covering ramified on the double points
of $\E$ in $\sigma^{-1}(\Delta)$ and $\tilde{X}$ is smooth over the non-ramified
part of $\E$.
\end{remark}

Let $p\in\Sing^0(\E)$ of normalized type $(d;a,b)$. Since $d$ divides $s$, let us denote:
$$
s_0:=\frac{s}{d},
g:=\gcd(n,s_0), n_1:=\frac{n}{g}, s_1:=\frac{s_0}{g},
e:=\gcd(n_1,d), n_2:=\frac{n_1}{e},
d_1:=\frac{d}{e}.
$$

\begin{lemma}\label{lema-sing0}
The preimage of $p$ under $\tilde{\sigma}$ consists of $g$ points
of type $(d_1;a n_2,b)$. 
\end{lemma}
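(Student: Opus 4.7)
The plan is to perform the computation on the finite cover of $X(d;a,b)$, normalize the explicit equation upstairs, and then descend to the $\mu_d$-quotient. Near $p$, the surface $X$ looks like $X(d;a,b)$ with $\E=\{y=0\}$; since $p$ is not a double point, $\sigma^{-1}(\Delta)$ coincides with $\E$ in a neighborhood of $p$, so $(f\circ\sigma)(x,y)=y^s u(x,y)$ for a $\mu_d$-invariant unit $u$. Absorbing a $\mu_d$-invariant $s$-th root of $u$ into $y$, I may assume $(f\circ\sigma)(x,y)=y^s$, so that $\hat X$ is locally $\tilde X_0/\mu_d$ with
$$
\tilde X_0:=\{z^n=y^s\}\subset\C^3,\qquad \xi\cdot(x,y,z)=(\xi^a x,\xi^b y,z),
$$
and $\tilde X$ is obtained by normalizing first and then quotienting (the two operations commute for finite-group actions).

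Set $G:=\gcd(n,s)$, $N:=n/G$, $S:=s/G$, so $\gcd(N,S)=1$. The factorization $z^n-y^s=\prod_{\omega\in\mu_G}(z^N-\omega y^S)$ splits $\tilde X_0$ into $G$ irreducible pieces, each normalized by $\varphi_\omega:\C^2_\omega\to\C^3$, $(x,t)\mapsto(x,t^N,\eta_\omega t^S)$ with $\eta_\omega^N=\omega$. A short arithmetic check using $\gcd(n_1,s_1)=1$ and $\gcd(n_2,d_1)=1$ (both built into the definitions of $g$ and $e$) gives $G=ge$ and $S=d_1 s_1$. A direct substitution in $\varphi_\omega$ shows that $\xi\in\mu_d$ sends $\C^2_\omega$ to $\C^2_{\omega\xi^{-bS}}$, so $\mu_d$ permutes the components through the homomorphism $\rho:\mu_d\to\mu_G$, $\xi\mapsto\xi^{-bS}$. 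Using $\gcd(d,b)=1$ and $\gcd(e,s_1)=1$, one finds $\ker\rho=\mu_{d_1}$ and $|\rho(\mu_d)|=e$. Since the origin of each $\C^2_\omega$ is the unique point mapping to $(0,0,0)\in\tilde X_0$, the $G=ge$ such origins break into $G/e=g$ $\mu_d$-orbits, giving the $g$ asserted preimage points in $\tilde X$.

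To compute the local type at each preimage point, pick a component $\omega$; its stabilizer is $H=\mu_{d_1}$, and the lift of $\xi\in H$ to $\C^2_\omega$ takes the form $(x,t)\mapsto(\xi^a x,\lambda(\xi)\,t)$, where compatibility with $\varphi_\omega$ forces $\lambda^N=\xi^b$ together with $\lambda^S=1$. Taking $\alpha,\beta\in\Z$ with $\alpha N+\beta S=1$ (so $\alpha n_2\equiv 1\pmod{d_1}$), one gets $\lambda(\xi)=\xi^{b\alpha}$, hence the local model is $X(d_1;a,b\alpha)$. Multiplying the single row by $n_2$ via Lemma~\ref{movs-dA}(4) (legitimate since $\gcd(n_2,d_1)=1$) converts this into $X(d_1;an_2,b\alpha n_2)=X(d_1;an_2,b)$, which is normalized because $\gcd(d_1,a)\mid\gcd(d,a)=1$, $\gcd(d_1,n_2)=1$, and $\gcd(d_1,b)\mid\gcd(d,b)=1$. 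The main technical obstacle is the bookkeeping of the $\gcd$ identities $G=ge$, $\gcd(d,S)=d_1$ and the determination of the exponent $b\alpha$; each follows from the defining relations for $g,n_1,s_1,e,n_2,d_1$, but requires careful arithmetic.
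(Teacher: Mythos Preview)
Your proof is correct and follows essentially the same route as the paper's. The only organizational difference is that the paper first factors $z^n-v^s=\prod_{\zeta_g\in\mu_g}(z^{n_1}-\zeta_g v^{s_1 d})$ \emph{on the quotient} $X(d;a,b)\times\C$ (each factor is $\mu_d$-invariant since $v^d$ is), obtaining $g$ pieces directly, and then observes that each piece lifts to $e$ components in $\C^3$ permuted cyclically by $\mu_d$; you instead factor $z^n-y^s$ into $G=ge$ pieces in $\C^3$ and count $\mu_d$-orbits via the homomorphism $\rho$. Both lead to the same stabilizer computation $(d_1;a,b\alpha)\cong(d_1;an_2,b)$ with $\alpha n_2\equiv 1\pmod{d_1}$, which is exactly how the paper concludes.
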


\begin{proof}
The local model
of $\hat{X}$ around the preimage of $p$ is of the type
$\{([(u,v)],z)\in X(d;a,b)\times\C\mid z^n=v^s\}$. Consider
$$
z^n-v^s=\prod_{\zeta_g\in\mu_g} (z^{n_1}-\zeta_g v^{s_1 d}).
$$
Note that each factor is well defined in $X(d;a,b)\times\C$, and hence
the normalization is composed by $g$ copies of the normalization
of $z^{n_1}= v^{s_1 d}$. 

In $\C^3$ the space $z^{n_1}= v^{s_1 d}$ has $e$ irreducible components
and the action of $\mu_d$ permutes cyclically these components. Hence
the quotient of this space by $\mu_d$ is the same as the
quotient of $z^{n_2}= v^{s_1 d_1}$ by the action of $\mu_{d_1}$
defined by $\zeta_{d_1}\cdot(u,v,z)\mapsto(\zeta_{d_1}^a u,\zeta_{d_1}^b v,z)$.
The normalization of $z^{n_2}= v^{s_1 d_1}$ is given by
$$
(u,t)\mapsto (u,t^{n_2},t^{s_1 d_1})
$$
and the induced action of $\mu_{d_1}$ is defined by 
$$
\zeta_{d_1}\cdot(u,t)\mapsto (\zeta_{d_1}^a u,\zeta_{d_1}^{b\alpha} t),\quad
\alpha n_2\equiv 1\mod d_1.
$$
The result follows since $(d_1;a,b\alpha)=(d_1;a n_2,b)$. 
\end{proof}

Let us  consider now a double point $p$ of type
$X(d;a,b)$ (normalized), $(\E_1,\E_2)$ and let $r,s$
be the multiplicities of $\E_1,\E_2$. 
Some notation is needed:
$$
m_0:=\frac{a r+b s}{d}, n_1:=\frac{n}{g},
r_1:=\frac{r}{g}, s_1:=\frac{s}{g}, m_1:=\frac{m_0}{g}.
$$
Note that $a r_1+b s_1=m_1 d$. We complete the notation:
$$
e:=\gcd(n_1,r_1,s_1), n_2:=\frac{n_1}{e}, r_2:=\frac{r_1}{e},
s_2:=\frac{s_1}{e}, d_1:=\frac{d}{e}.
$$ 
Since $\gcd(m_1,e)=1$, $e$ divides $d$ and then $d_1\in\Z$.
Note that $a r_2+b s_2=m_1 d_1$.
Since $\gcd(n_2,r_2,s_2)=1$, one fixes $k,l\in\Z$ such that $m_1+ k r_2+ l s_2\equiv 0\mod n_2$
and denote:
$$
a':=a+k d_1,\quad b':=b+l d_1.
$$
\begin{lemma}\label{lema-doublepoint}
The preimage of $p$ under $\tilde{\sigma}$ consists of $g$ points
of type 
$$
X
\left(
\begin{matrix}
d_1 n_2\\
n_2 
\end{matrix}
\right|
\left.
\begin{matrix}
a'&b'\\
s_2&-r_2 
\end{matrix}
\right).
$$
The type is not normalized.
\end{lemma}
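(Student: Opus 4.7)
The plan is to mirror the argument of Lemma~\ref{lema-sing0}, now in the presence of two exceptional components meeting at $p$. Locally around $p$ the variety $\hat X$ is cut out of $X(d;a,b)\times\C$ by $z^n=u^r v^s$, which is well defined on the quotient since $ar+bs=m_0 d\equiv 0\pmod d$. First I would factor in $\C[u,v,z]$,
$$z^n-u^r v^s=\prod_{\zeta\in\mu_g}\bigl(z^{n_1}-\zeta\,u^{r_1}v^{s_1}\bigr),$$
and observe that each factor is invariant under $(u,v,z)\mapsto(\zeta_d^a u,\zeta_d^b v,z)$, since $u^{r_1}v^{s_1}$ transforms by $\zeta_d^{ar_1+bs_1}=\zeta_d^{m_1 d}=1$. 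These define $g$ distinct branches on $X(d;a,b)\times\C$, accounting for the $g$ points in the statement. Fixing the branch $\zeta=1$, I would refactor in $\C^3$,
$$z^{n_1}-u^{r_1}v^{s_1}=\prod_{\eta\in\mu_e}\bigl(z^{n_2}-\eta\,u^{r_2}v^{s_2}\bigr).$$
This time $\mu_d$ permutes the factors, sending $\eta\mapsto\eta\,\zeta_e^{m_1}$ (via $ar_2+bs_2=m_1 d_1$ and $\zeta_d^{d_1}=\zeta_e$). Since $\gcd(m_1,e)=1$, this permutation is transitive with stabilizer the unique index-$e$ subgroup $\mu_{d_1}\subset\mu_d$, so the quotient of the chosen branch by $\mu_d$ becomes $S/\mu_{d_1}$ where $S:=\{z^{n_2}=u^{r_2}v^{s_2}\}\subset\C^3$.

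Next I would equivariantly normalize $S$ via the monomial parametrization
$$\phi:\C^2\to S,\qquad (x,y)\mapsto (x^{n_2},\,y^{n_2},\,x^{r_2}y^{s_2}),$$
whose generic fiber is $H:=\{(\xi,\eta)\in\mu_{n_2}\times\mu_{n_2}\mid\xi^{r_2}\eta^{s_2}=1\}$. The assignment $\zeta\mapsto(\zeta^{s_2},\zeta^{-r_2})$ is an isomorphism $\mu_{n_2}\xrightarrow{\sim}H$: it clearly lands in $H$, and it is injective because $\zeta^{s_2}=\zeta^{r_2}=1$ forces $\zeta\in\mu_{\gcd(n_2,r_2,s_2)}=\mu_1$, both sides having order $n_2$. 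Hence $\C^2/H\to S$ is the normalization, with $H$ acting by $\zeta\cdot(x,y)=(\zeta^{s_2}x,\zeta^{-r_2}y)$. This produces the second row $(n_2;\,s_2,\,-r_2)$ of the claimed type.

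Finally I would lift the $\mu_{d_1}$-action from $S$ to $\C^2$. Writing $\zeta_{d_1}=\zeta_{d_1 n_2}^{n_2}$, a lift $(x,y)\mapsto(\lambda x,\mu y)$ of the automorphism $(u,v,z)\mapsto(\zeta_{d_1}^a u,\zeta_{d_1}^b v,z)$ of $S$ must satisfy
$$\lambda^{n_2}=\zeta_{d_1}^a,\qquad \mu^{n_2}=\zeta_{d_1}^b,\qquad \lambda^{r_2}\mu^{s_2}=1.$$
The first two conditions force $\lambda=\zeta_{d_1 n_2}^{a+kd_1}$ and $\mu=\zeta_{d_1 n_2}^{b+ld_1}$ for suitable integers $k,l$; the third then reduces to $d_1(m_1+kr_2+ls_2)\equiv 0\pmod{d_1 n_2}$, i.e.\ $m_1+kr_2+ls_2\equiv 0\pmod{n_2}$, which is precisely the congruence defining $a'=a+kd_1$ and $b'=b+ld_1$, and admits a solution because $\gcd(n_2,r_2,s_2)=1$. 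Taking $\lambda=\zeta_{d_1 n_2}^{a'}$, $\mu=\zeta_{d_1 n_2}^{b'}$ then gives the first row $(d_1 n_2;\,a',\,b')$ of the type. Combining both generators, the quotient is realized as the action of $\mu_{d_1 n_2}\times\mu_{n_2}$ via $\bigl(\begin{smallmatrix}a'&b'\\ s_2&-r_2\end{smallmatrix}\bigr)$; the kernel of this action has order $n_2$, which is exactly why the resulting type is not normalized. The main obstacle I anticipate is this last step: coherently choosing $(a',b')$ so that the lift descends through $\phi$ while producing a row that fits the paper's matrix notation. Once the identity $ar_2+bs_2=m_1 d_1$ is in place, the Bézout-type solvability of $m_1+kr_2+ls_2\equiv0\pmod{n_2}$ handles the rest.
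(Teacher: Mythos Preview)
Your proposal is correct and follows essentially the same route as the paper: factor $z^n-u^rv^s$ into $g$ $\mu_d$-invariant pieces, then refactor each piece into $e$ components transitively permuted by $\mu_d$ with stabilizer $\mu_{d_1}$, and finally parametrize $S=\{z^{n_2}=u^{r_2}v^{s_2}\}$ by $(x,y)\mapsto(x^{n_2},y^{n_2},x^{r_2}y^{s_2})$ and lift the $\mu_{d_1}$-action. You are in fact more explicit than the paper at two points---you identify the kernel $H$ of the parametrization with $\mu_{n_2}$ acting via $(s_2,-r_2)$, and you \emph{derive} the congruence $m_1+kr_2+ls_2\equiv 0\pmod{n_2}$ defining $a',b'$ from the compatibility condition $\lambda^{r_2}\mu^{s_2}=1$, whereas the paper simply asserts that the $\mu_{d_1 n_2}$-action with weights $(a',b')$ lifts the $\mu_{d_1}$-action and leaves the second row implicit.
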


\begin{proof}
The local model
of $\hat{X}$ over $p$ is
$\{([(u,v)],z)\in X(d;a,b)\times\C\mid z^n=u^r v^s\}$.
We have
$$
z^n-u^r v^s=\prod_{\zeta_g\in\mu_g} (z^{n_1}-\zeta_g u^{r_1} v^{s_1}).
$$
Since each factor is well-defined in $X(d;a,b)\times\C$, 
the normalization is composed by $g$ copies of the normalization
of $z^{n_1}= u^{r_1} v^{s_1}$.

In $\C^3$ the space $z^{n_1}= u^{r_1} v^{s_1}$ has $e$~irreducible components
and the action of $\mu_d$ permutes cyclically these components. Hence
the quotient of this space by $\mu_d$ is the same as the
quotient of $z^{n_2}= u^{r_2} v^{s_2}$ by the action of $\mu_{d_1}$
defined by $\zeta_{d_1}\cdot(u,v,z)\mapsto(\zeta_{d_1}^a u,\zeta_{d_1}^b v,z)$.

Note that $a,b$ can be replaced by $a',b'$ in the
action of $\mu_{d_1}$. Moreover, $D:=a' r_2+b' s_2\equiv 0\mod n_2$. The map
$$
(t,w)\mapsto(t^{n_2},w^{n_2},t^{r_2} w^{s_2}) 
$$ 
parametrizes (not in a biunivocal way) the space $z^{n_2}= u^{r_2} v^{s_2}$.
The action of $\mu_{n_2 d_1}$ defined by
$$
\zeta_{n_2 d_1}\cdot(t,w)\mapsto(\zeta_{n_2 d_1}^{a'} t,\zeta_{n_2 d_1}^{b'} w)
$$
lifts the former action of $\zeta_{d_1}$. The normalization 
of the quotient of $z^{n_2}= u^{r_2} v^{s_2}$ by the action of $\mu_{d_1}$ is deduced to be of (non-normalized) type
\begin{equation*}
X
\left(
\begin{matrix}
d_1 n_2\\
n_2 
\end{matrix}
\right|
\left.
\begin{matrix}
a'&b'\\
s_2&-r_2 
\end{matrix}
\right).\qedhere
\end{equation*}
\end{proof}

\begin{remark}
It is easier to normalize this type case by case,
but at least a method to present it as a cyclic type is shown here. Let $\mu:=\gcd(a',d_1 s_2)$ and let
$\beta,\gamma\in\Z$ such that $\mu=\beta a'+\gamma d_1 s_2$.
Note that $\mu$ divides $D$.
Then the preceding type is isomorphic (via the identity) to
$$
X
\left(
\begin{matrix}
n_2\\
n_2\\
d_1 n_2 
\end{matrix}
\right|
\left.
\begin{matrix}
0&\gamma\frac{D}{\mu}\\
0&-\beta\frac{D}{\mu}\\ 
\mu&\beta b'-\gamma d_1 r_2
\end{matrix}
\right)=
X
\left(
\begin{matrix}
n_2\\
d_1 n_2 
\end{matrix}
\right|
\left.
\begin{matrix}
0&\frac{D}{\mu}\\ 
\mu&\beta b'-\gamma d_1 r_2
\end{matrix}
\right)
$$
since $\gcd(\beta,\gamma)=1$.
Let $h:=\gcd(n_2,\frac{D}{\mu})$.
Then, this space is isomorphic to
$X(d_1 n_2; \alpha,(\beta b'-\gamma d_1 r_2)\frac{n_2}{h})$.
If $j:=\gcd(\mu,\frac{n_2}{h})$, then it is isomorphic to
the space $X(d_1 h; \frac{\mu}{j},\beta b'-\gamma d_1 r_2)$
(maybe non normalized).
 \end{remark}

The following statement summarizes the results for each irreducible component
of the divisor.

\begin{lemma}\label{lema-component}
Let $\E$ be an exceptional component of $\sigma$ with multiplicity $s$,
$m:=\gcd(s,n)$.
Let $\Sing(\E)$ be the union of $\Sing^0(\E)$ with the double points
of $\sigma^{-1}(\Delta)$ in $\E$. Let $\nu$ be the $\gcd$ of $s$
and the values $g_P$ for each $P\in\Sing(\E)$ obtained in 
Lemmas{\rm~\ref{lema-sing0}} and{\rm~\ref{lema-doublepoint}}.
Then, $\tilde{\sigma}^{-1}(\E)$ consists of $\nu$ connected
components. Each component is an $(\frac{s}{\nu})$-fold cyclic
covering whose genus is computed using Riemann-Hurwitz formula
and the self-intersection of each component is
$\frac{m^2 \eta}{n \nu}$ if $\eta=\E^2$.  
\end{lemma}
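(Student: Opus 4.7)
The plan is to identify $\tilde\pi^{-1}(\E)$ with a (ramified) cyclic cover of $\E\cong\bP^1$ and then read off every piece of data from the monodromy of that cover together with the projection formula. First I would exploit the $\mu_n$-deck action of $\pi:H\to(\C^n,0)$: over a generic point of $\E$ the pulled-back equation of $H$ reads $z^n=v^s$, and the factorization
\[
z^n-v^s=\prod_{\zeta\in\mu_m}\bigl(z^{n/m}-\zeta v^{s/m}\bigr),\qquad m=\gcd(n,s),
\]
together with the elementary normalization $(u,t)\mapsto(u,t^{n/m},t^{s/m})$ of each factor, shows that $\tilde\pi$ has $m$ smooth sheets over $\E\setminus\Sing(\E)$. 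The $\mu_n$-action has kernel $\mu_{n/m}$ there, so $\tilde\pi^{-1}(\E)\setminus\Sing(\E)\to\E\setminus\Sing(\E)$ is an \'etale $\mu_m$-Galois cover.

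Next I would feed in Lemmas~\ref{lema-sing0} and~\ref{lema-doublepoint}. At each $P\in\Sing(\E)$ those lemmas produce $g_P$ preimage points, and an inspection of the parametrizations used in their proofs shows that each such preimage carries exactly one local branch of $\tilde\pi^{-1}(\E)$. Consequently the local monodromy in $\mu_m$ around $P$ is an element of order $m/g_P$. Since every exceptional component arising from the Newton process~\ref{puiseux} is isomorphic to $\bP^1$, the group $\pi_1(\E\setminus\Sing(\E))$ is generated by small loops around the points of $\Sing(\E)$ subject to the single relation that their product is trivial. The image of the monodromy in $\mu_m$ is therefore the subgroup generated by elements of orders $m/g_P$; its order is $m/\gcd_P g_P$, so the number of connected components is the index of this subgroup. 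Using $g_P\mid m\mid s$ this number can be rewritten as $\nu=\gcd\bigl(s,\{g_P\}_P\bigr)$.

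The remaining structure is then formal. The $\mu_n$-action permutes the $\nu$ components transitively, so the stabilizer $H\leq\mu_n$ of a single component $C$ has order $n/\nu$ and acts on $C$ with generic stabilizer $\mu_{n/m}$; hence $C\to\E$ is cyclic Galois with group $H/\mu_{n/m}$ of order $m/\nu$. For the genus I would apply Riemann--Hurwitz to $C\to\E\cong\bP^1$, using that above each $P\in\Sing(\E)$ there are $g_P/\nu$ preimages in $C$, each of ramification index $m/g_P$. For the self-intersection, pairwise disjointness and $\mu_n$-conjugacy of the $C_i$ give $\tilde\pi^{*}(\E)=\tfrac{n}{m}\sum_{i=1}^{\nu}C_i$ with all $C_i^{2}$ equal, and the projection formula from Theorem~\ref{formula_pull-back}\eqref{formula_pull-back2} yields
\[
n\,\eta=\tilde\pi^{*}(\E)^{2}=\tfrac{n^{2}}{m^{2}}\,\nu\,C_1^{2},\qquad\text{hence}\qquad C_1^{2}=\frac{m^{2}\,\eta}{n\,\nu}.
\]

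The main obstacle is the monodromy computation in the second paragraph: one must match the algebraic data of Lemmas~\ref{lema-sing0} and~\ref{lema-doublepoint} (the $g_P$ preimages and their cyclic quotient types) with the topological assertion that the local monodromy in $\mu_m$ has order exactly $m/g_P$, and then verify that the single global relation $\prod_P\gamma_P=1$ in $\pi_1(\E\setminus\Sing(\E))$ imposes no further constraint on the image of monodromy. Once those local-to-global identifications are in place, the cyclic degree of each component, the Riemann--Hurwitz computation of the genus and the self-intersection formula all follow mechanically from $\mu_n$-equivariance and the pull-back formula.
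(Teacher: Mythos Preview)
Your self-intersection computation is exactly what the paper does: write $\tilde\pi^{*}(\E)=\tfrac{n}{m}\tilde\E$, apply the projection formula to get $\tilde\E^{2}=\tfrac{m^{2}}{n}\eta$, and divide by $\nu$ using that the components are disjoint and $\mu_n$-conjugate. In fact the paper's proof consists \emph{only} of this step, stating that ``only the self-intersection statement needs a proof''; the claims about the number of connected components, the cyclic degree of each component, and the genus are treated as already implicit in the discussion preceding Lemmas~\ref{lema-sing0} and~\ref{lema-doublepoint}.

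Your monodromy argument for the component count is therefore genuinely additional content, and it is sound: generic fibres have $m=\gcd(s,n)$ points, $\pi_1(\bP^1\setminus\Sing(\E))$ is generated by the peripheral loops, and in the cyclic group $\mu_m$ the subgroup generated by elements of orders $m/g_P$ is $\mu_{m/\gcd_P g_P}$, giving $\gcd_P g_P$ components; since each $g_P\mid m\mid s$ this equals $\gcd(s,\{g_P\})=\nu$. One small remark: you conclude that each component is an $(m/\nu)$-fold cover of $\E$, whereas the statement as written says $(s/\nu)$-fold. The worked examples immediately after the lemma (e.g.\ Cases~1 and~2, where the coverings have degree $1$ and $2$ respectively, matching $m/\nu$ and not $s/\nu$) show that $m/\nu$ is what is intended, so this is a misprint in the statement rather than a defect in your argument.
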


\begin{proof}
Only the self-intersection statement needs a proof. Let
$\tilde{\E}:=\tilde{\sigma}^{-1}(\E)$. Then
$\tilde{\sigma}^{*}(\E)=\frac{n}{m}\tilde{\E}$. Hence:
$$
\tilde{\E}^2=\frac{m^2}{n^2}\tilde{\sigma}^{*}(\E)^2=
\frac{m^2}{n}(\E)^2=\frac{m^2\eta}{n}.
$$
Since $\tilde{\E}$ has $\nu$ disjoint components
related by an automorphism of $\tilde{X}$, the result follows.
\end{proof}

\begin{ex}
Let us consider the singularity $z^n-(x^2+y^3)(x^3+y^2)=0$, $n>1$. As it was shown in
Example~\ref{example_2}, the minimal $\Q$-embedded resolution of $(x^2+y^3)(x^3+y^2)=0$
has two exceptional components $\E_1,\E_2$. Each component has multiplicity $10$,
self-intersection $-\frac{3}{10}$,
intersects the strict transform at a smooth point and has one singular point
of type $(2;1,1)$. The two components intersect at a double point of type 
$(5;2,-3)=(5;1,1)$. Let us denote $g_p(n)$ the previous numbers for a given~$n$.
The computations are of four types depending on $\gcd(n,10)=1,2,5,10$.

Let us fix one of the exceptional components, say $\E_1$, since
they are symmetric. Before studying separately each case, let $p_0$ 
be the intersection point of $\E_1$ with the strict transform, then its preimage
is the normalization of $z^n-x y^{10}=0$ which is of
type $(n;-10,1)$. In particular $g_p(n)=1$ and
$\nu_{\E_1}=1$, i.e.~$\tilde{\E_1}:=\tilde{\sigma}^{-1}(\E_1)$
is irreducible. Let us denote $p_1:=\E_1\cap\E_2$.

\begin{caso}
$\gcd(n,10)=1$.
\end{caso}

Let us study first
the preimage over a generic point of $\E_1$, which will be the normalization
of $z^n-y^{10}=0$, i.e.~one point. By Lemma~\ref{lema-component},
$\tilde{\E_1}^2=-\frac{3}{10 n}$ and $\tilde{\E}_1$ is rational. The preimage
of $p_0$ is of reduced type $(n;-10,1)$.

Let $p\in\Sing^0(\E_1)$. It is
of type $(2;1,1)$. Applying Lemma~\ref{lema-sing0},
one obtains that it
is of type $(2;11,1)=(2;1,1)$.

One has $g_{p_1}(n)=e=1$.
Following the notation previous to Lemma~\ref{lema-doublepoint}, we choose $k=l\in\Z$ such that $5 k+1\equiv 0\mod n$.
A type
$$
\left(
\begin{matrix}
5 n\\
n 
\end{matrix}
\right|
\left.
\begin{matrix}
1+5 k&1+5 k\\
10&-10
\end{matrix}
\right)=
\left(
\begin{matrix}
5 n\\
n 
\end{matrix}
\right|
\left.
\begin{matrix}
1+5 k&1+5 k\\
1&-1
\end{matrix}
\right)=
\left(
\begin{matrix}
5 n\\
5 n 
\end{matrix}
\right|
\left.
\begin{matrix}
1+5 k&1+5 k\\
5&-5
\end{matrix}
\right),
$$
is obtained, which is of type $(5 n; 1,10 k+1)$;
since $(10 k+1)^2\equiv 1\mod 5 n$, this type is symmetric and normalized.
Then, the minimal embedded $\Q$-resolution of the surface singularity
consists of two rational divisors of self-intersection $-\frac{3}{10 n}$,
with a unique double point of type $X(5 n;1;1+10 k)$ and each divisor
has two other singular points, one double and the other one
of type $X(n;-10,1)$.

\begin{figure}[h t]
\centering\includegraphics{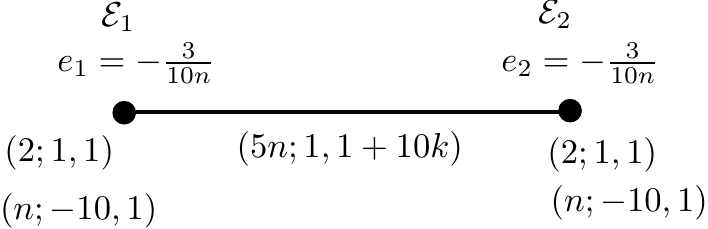}
\caption{Dual graph for $z^n= (x^2+y^3)(x^3+y^2)$, $\gcd(n,10)=1$, $5 k+1\equiv 0\mod n$.}
\label{fig-grafo-jung1}
\end{figure}

\begin{caso}
$\gcd(n,10)=2$.
\end{caso}

The preimage over a generic point of $\E_1$, which will be the normalization
of $z^n-y^{10}=0$, i.e. $\tilde{\E_1}:=\tilde{\sigma}^{-1}(\E_1)$ is
a $2$-fold covering of $\E_1$. The point $p_0$ is a ramification point of the covering (with one preimage) and
it is of type $(n;-10,1)=(\frac{n}{2};-5,1)$.

Let $p\in\Sing^0(\E_1)$. Since $s_0=2$, $g_p(n)=1$ and $e=2$, applying Lemma~\ref{lema-sing0}, one has $d_1=1$.
There is only one preimage and it is a smooth point.

Let us finish with $p_1$. In this case, $g_{p_1}(n)=2$, $n_1:=\frac{n}{2}$, and
$e=1$. It can be chosen $k=l\in\Z$ such that $5 k+1\equiv 0\mod n_1$.
Using the same computations as in the previous case,
two points of type $X(5 n_1; 1,10 k+1)$ are obtained.

Using Riemann-Hurwitz formula $\tilde{\E_1}$
is irreducible and rational; since $\tilde{\sigma}^*(\E_1)=5 \tilde{\E_1}$
one has that $\tilde{\E_1}^2=-\frac{3}{5 n_1}$.
Then, the minimal embedded $\Q$-resolution of the surface singularity
consists of two rational divisors,
with two double points of type $X(5 n_1;1,1+10 k)$ and each divisor
has another singular point
of type $X(n_1;-5,1)$. Note that the graph is not a tree.

\begin{figure}[h t]
\centering\includegraphics{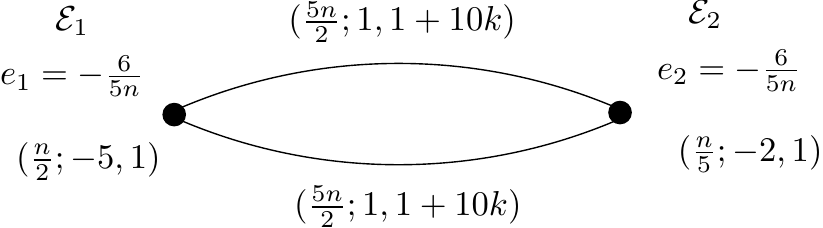}
\caption{Dual graph for $z^n= (x^2+y^3)(x^3+y^2)$, $\gcd(n,10)=2$, $5 k+1\equiv 0\mod \frac{n}{2}$.}
\label{fig-grafo-jung2}
\end{figure}

\begin{caso}
$\gcd(n,10)=5$.
\end{caso}

The preimage over a generic point of $\E_1$, which will be the normalization
of $z^n-y^{10}=0$, i.e. $\tilde{\E_1}:=\tilde{\sigma}^{-1}(\E_1)$ is
a $5$-fold covering of $\E_1$.
As above, $p_0$ is a ramification point of the covering (with one preimage) and
it is of type $(n;-10,1)=(\frac{n}{5}:-2,1)$.

Let $p\in\Sing^0(\E_1)$. One has $g_p(n)=5$ and $d_1=2$.
Hence the covering does not ramify at $p$ and
its preimage consists of $5$ points of type~$(2;1,1)$.

In the case of $p_1$ we have $g_{p_1}(n)=1$, $e=5$, $n_2=\frac{n}{5}$
and $d_1=1$. Hence a point of type $X(n_2; 1,-1)$ is obtained.

As a consequence, $\tilde{\E_1}$ is
rational and $\tilde{\E_1}^2=-\frac{3}{2 n_2}$.
Then, the minimal embedded $\Q$-resolution of the surface singularity
consists of two rational divisors,
with one double point of type $X(n_2;1;-1)$ and each divisor
has another singular point 
of type $X(n_2;-2,1)$ and five double points.

\begin{figure}[h t]
\centering\includegraphics{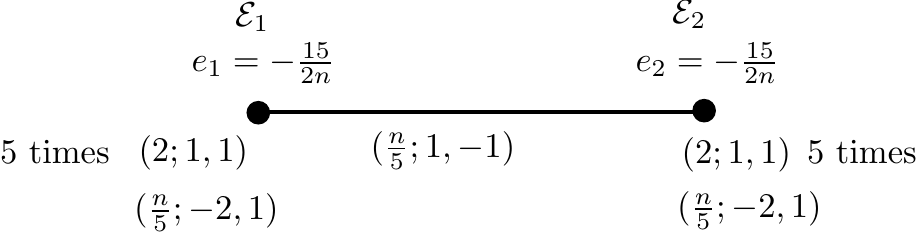}
\caption{Dual graph for $z^n= (x^2+y^3)(x^3+y^2)$, $\gcd(n,10)=5$.}
\label{fig-grafo-jung3}
\end{figure}

\begin{caso}
$\gcd(n,10)=10$.
\end{caso}

The preimage over a generic point of $\E_1$, which will be the normalization
of $z^n-y^{10}=0$, i.e. $\tilde{\E_1}:=\tilde{\sigma}^{-1}(\E_1)$ is
a $10$-fold covering of $\E_1$.
The point $p_0$ is a ramification point of the covering (with one preimage) and
it is of type $(n;-10,1)=(\frac{n}{10}:-1,1)$.

Let $p\in\Sing^0(\E_1)$. One has $g_p(n)=5$ and $d_1=1$.
Hence the preimage of $p$ consists of $5$ smooth points.

Finally one has $g_{p_1}(n)=2$, $e=5$, $n_2=\frac{n}{10}$
and $d_1=1$. Hence a point of type $X(n_2; 1,-1)$ is obtained.

Using Riemann-Hurwitz, $\tilde{\E_1}$ has genus~$2$; since $\tilde{\sigma}^*(\E_1)=\tilde{\E_1}$, then $\tilde{\E_1}^2=-\frac{3}{n_2}$.
Then, the minimal embedded $\Q$-resolution of the surface singularity
consists of two divisors of genus~$2$,
with one double point of type $X(n_2;1;-1)$ and each divisor
has another singular point
of type $X(n_2;-1,1)$.
\begin{figure}[h t]
\centering\includegraphics{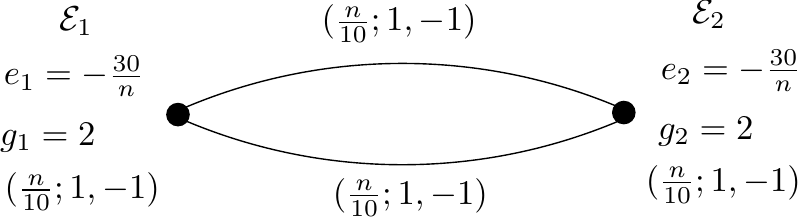}
\caption{Dual graph for $z^n= (x^2+y^3)(x^3+y^2)$, $\gcd(n,10)=10$.}
\label{fig-grafo-jung4}
\end{figure}
\end{ex}

As a final application, intersection theory and weighted
blow-ups are essential tools to construct a resolution from a $\Q$-resolution.
Note that even when one uses the classical Jung method, this step is needed. 
The resolution of cyclic quotient singularities
for surfaces is known, see the works by Jung~\cite{Jung08},
Hirzebruch~\cite{Hirzebruch53}, and the exposition in the book
by Hirzebruch-Neumann-Koh~\cite{hnk}.

This resolution process uses the theory of continuous fractions.
We illustrate the use of weighted blow-ups to solve these singularities 
in two ways.

First, let $X:=X(d;a,b)$, where $d,a,b$ are pairwise coprime, $d>1$,
and $1\leq a,b<d$. Then the $(a,b)$-blow-up of $X$ produces a new space
with an exceptional divisor (of self-intersection $-\frac{d}{a b}$)
and two singular points of type $(a;-d,b)$ and $(b;a-d)$. Since the index
of these singularities is less than~$d$ we finish by induction. Note
that if we have a compact divisor passing through the singular point,
it is possible to compute the self-intersection multiplicity of the strict transform, see Proposition~\ref{formula_self-intersection}.

The second way allows us to recover the Jung-Hirzebruch resolution. Recall briefly
the notion of continuous fraction. Let $s\in\mathbb{Q}$, $r>1$. The continuous
fraction associated with $s$ is a tuple of integers $cf(s):=[q_1,\dots,q_n]$, $q_j>1$,
defined inductively as follows:
\begin{itemize}
\item If $s\in\Z$ then $cf(s):=[s]$.
\item If $s\notin\Z$, write $s=\frac{d}{k}$ in reduced form. Consider the \emph{excess division algorithm}
$d=q k-r$, $q,r\in\Z$, $0<r<k$. Then, $cf(s):=[q,cf(\frac{k}{r})]$. 
\end{itemize}
 Hence, for instance, $[q_1,q_2,q_3]=q_1-\frac{1}{q_2-\frac{1}{q_3}}$.
\begin{prop}
Let $X:=X(d;1,k)$ be a normalized type with $1\leq k<d$ and let $cf(\frac{d}{k}):=[q_1,\dots,q_n]$. Then the 
exceptional locus of the resolution of $X$ consists of a linear chain of rational curves
with self-intersections $-q_1,\dots,-q_n$.
\end{prop}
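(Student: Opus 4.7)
The plan is to induct on $n$, the length of $cf(d/k)$, constructing the resolution by iteratively performing a $(1,k)$-weighted blow-up and then recursing on the remaining singular point.

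For the base case $n=1$, the condition $d/k=q_1\in\Z$ together with the normalization $\gcd(d,k)=1$ forces $k=1$ and $d=q_1$. The $(1,1)$-weighted blow-up of $X(d;1,1)$ produces, by Proposition~\ref{formula_self-intersection} applied with $(p,q)=(a,b)=(1,1)$ and $e=\gcd(d,0)=d$, a single rational exceptional curve with self-intersection $-e^2/(dpq)=-d=-q_1$; both new singular points are of trivial type $(1;*,*)$, so the resolution is complete.

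For the inductive step ($n\ge 2$), write $d=q_1 k-r$ with $0<r<k$; automatically $\gcd(k,r)=1$ and $cf(k/r)=[q_2,\dots,q_n]$. Perform the $(1,k)$-weighted blow-up $\pi_1\colon X_1\to X(d;1,k)$. Using the chart formulas of~\ref{blow-up2-sing-ab} with $(p,q)=(a,b)=(1,k)$, one gets $e=\gcd(d,0)=d$, so Proposition~\ref{formula_self-intersection} yields $E_1^2=-d/k$. The first chart is $X(1;1,0)\cong\C^2$, while the origin of the second chart is of type $X(k;m,1)$ with $m=(\mu k-1)/d$ and $\mu k\equiv 1\pmod d$; using $d\equiv -r\pmod k$ a short computation gives $mr\equiv 1\pmod k$, so by Lemma~\ref{movs-dA} (multiplying the first row of $A$ by $r$) the type becomes $X(k;1,r)$, and in this chart $E_1$ is locally cut out by $\{y=0\}$.

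Iterating the same construction on $X(k;1,r)$ and invoking the inductive hypothesis produces a resolution $\pi_2\colon X_2\to X_1$ whose exceptional divisor is a linear chain $\E_2,\dots,\E_n$ with self-intersections $-q_2,\dots,-q_n$; by construction the first step of $\pi_2$ is a $(1,r)$-blow-up at the singular point of type $X(k;1,r)$. The crucial observation is that after this $(1,r)$-blow-up the strict transform $\tilde E_1$ lies entirely in the (smooth) first chart, where it meets $\E_2$ transversally at a single smooth point, while in the second chart the preimage of $\{y=0\}$ is $\{y^r=0\}$, which is purely exceptional. Hence $\tilde E_1$ is disjoint from the unique remaining singular point of the intermediate surface, and no subsequent weighted blow-up in $\pi_2$ alters its self-intersection. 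The single correction from the $(1,r)$-blow-up, computed via Proposition~\ref{formula_self-intersection} (last line) with $(d;a,b)=(k;1,r)$, $(p,q)=(1,r)$ and $(1,r)$-multiplicity $\mu=r$ of $\{y=0\}$, gives
\begin{equation*}
\tilde E_1^2 \;=\; -\frac{d}{k}\;-\;\frac{r^2}{k\cdot 1\cdot r}\;=\;-\frac{d+r}{k}\;=\;-q_1,
\end{equation*}
since $d+r=q_1 k$. Combined with $\tilde E_1\cdot\E_2=1$ (transverse at a smooth point) and $\tilde E_1\cdot\E_i=0$ for $i\ge 3$, this produces the asserted chain with self-intersections $-q_1,\dots,-q_n$. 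The main obstacle is the bookkeeping at the inductive transition: identifying the new singular type as $X(k;1,r)$ via the congruence $mr\equiv 1\pmod k$, and then verifying that all subsequent blow-up centres of $\pi_2$ are disjoint from $\tilde E_1$, so that a single application of the self-intersection formula suffices to conclude $\tilde E_1^2=-q_1$.
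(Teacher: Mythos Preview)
Your proof is correct and follows essentially the same inductive strategy as the paper: perform the $(1,k)$-blow-up, identify the remaining singular point as $X(k;1,r)$, and compute the self-intersection of the strict transform after the next $(1,r)$-blow-up. The only noteworthy difference is that you invoke the general chart formula of~\ref{blow-up2-sing-ab} and then argue via the congruence $mr\equiv 1\pmod k$, whereas the paper applies the special case~\ref{blow-up2-sing-pq} directly, obtaining the second chart as $X(k;1,-d)=X(k;1,r)$ in one line since $-d\equiv r\pmod k$; your extra care in checking that subsequent blow-up centres miss $\tilde E_1$ is a detail the paper leaves implicit.
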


\begin{proof}
As stated above, we perform the $(1,k)$-blow-up of $X$. We obtain 
an exceptional divisor $\E_1$ such that $\E_1^2=-\frac{d}{k}$.
If $k=1$, we are done. If $k>1$ then $\E_1$
contains a singular point $Y:=X(k;1,-d)$. We know
that $d=q_1 k-r$, $1<r<k$, and $cf(\frac{k}{r})=[q_2,\dots,q_n]$.
Since $r=q_1 k-d$, then $Y=X(k;1,r)$. We may apply induction hypothesis (in the length of $cf$) 
and the result follows if we obtain
the right self-intersection multiplicity of the first divisor.

The next blow-up is with respect to $(1,r)$. Following Proposition \ref{formula_self-intersection}, the self-intersection
of the strict transform of $\E_1$ equals $-\frac{d}{k}-\frac{r^2}{k\cdot 1\cdot r}=-\frac{d}{k}-\frac{r}{k}=-q_1$, since the divisor $\E_1$ is given by $\{y=0\}\subset X(k;1,r)$.
\end{proof}

\begin{remark}
The last part of the proof allows us to give the right way to pass
from a $\Q$-resolution to a resolution. The Jung-Hirzebruch method
gives the resolution of the cyclic singularities. Let $\E$ be an irreducible
component of the $\Q$-resolution with self-intersection $-s\in\mathbb{Q}$,
and let $\Sing(\E):=\{P_1,\dots,P_r\}$, with $P_i$ of type $(d_i;1,k_i)$,
$1\leq k_i<d_i$ and $\gcd(d_i,k_i)=1$. Then, the self-intersection number
of $\E$, assuming its local equation is $y=0$, can be computed as above. That is, one performs the weighted blow-ups of
type $(1,k_i)$ at each point, obtaining
$
-s-\sum_{i=1}^r \frac{k_i}{d_i}
$,
which must be an integer.
\end{remark}

In future work, we will use these methods to study curves
in weighted projective planes.


\begin{thebibliography}{10}

\bibitem{kjj-qcw}
E.~Artal, J.~Mart{\'i}n-Morales, and J.~Ortigas-Galindo, \emph{{C}artier and
  {W}eil divisors on varieties with quotient singularities}, Preprint available
  at \texttt{arXiv:1104.5628 [math.AG]}, 2011.

\bibitem{Baily56}
W.L. Baily, Jr., \emph{The decomposition theorem for {$V$}-manifolds}, Amer. J.
  Math. \textbf{78} (1956), 862--888.

\bibitem{Dolgachev82}
I.~Dolgachev, \emph{Weighted projective varieties}, Group actions and vector
  fields ({V}ancouver, {B}.{C}., 1981), Lecture Notes in Math., vol. 956,
  Springer, Berlin, 1982, pp.~34--71.

\bibitem{Fujiki74}
A.~Fujiki, \emph{On resolutions of cyclic quotient singularities}, Publ. Res.
  Inst. Math. Sci. \textbf{10} (1974/75), no.~1, 293--328.

\bibitem{Fulton98}
W.~Fulton, \emph{Intersection theory}, second ed., Ergebnisse der Mathematik
  und ihrer Grenzgebiete. 3. Folge. A Series of Modern Surveys in Mathematics,
  vol.~2, Springer-Verlag, Berlin, 1998.

\bibitem{Hirzebruch53}
F.~Hirzebruch, \emph{\"{U}ber vierdimensionale {R}iemannsche {F}l\"achen
  mehrdeutiger analytischer {F}unktionen von zwei komplexen
  {V}er\"anderlichen}, Math. Ann. \textbf{126} (1953), 1--22.

\bibitem{hnk}
F.~Hirzebruch, W.~D. Neumann, and S.~S. Koh, \emph{Differentiable manifolds and
  quadratic forms}, Marcel Dekker Inc., New York, 1971, Appendix II by W.
  Scharlau, Lecture Notes in Pure and Applied Mathematics, Vol. 4.

\bibitem{Jung08}
H.W.E. Jung, \emph{Darstellung der funktionen eines algebraischen k{\"o}rpers
  zweier unabh{\"a}ngigen ver{\"a}nderlichen $x$, $y$ in der umgebung einen
  stelle $x = a$, $y = b$}, vol. 133, 1908.

\bibitem{Jung51}
\bysame, \emph{Einf\"uhrung in die {T}heorie der algebraischen {F}unktionen
  zweier {V}er\"anderlicher}, Akademie Verlag, Berlin, 1951.

\bibitem{Laufer71}
H.B. Laufer, \emph{Normal two-dimensional singularities}, Princeton University
  Press, Princeton, N.J., 1971, Annals of Mathematics Studies, No. 71.

\bibitem{Mumford61}
D.~Mumford, \emph{The topology of normal singularities of an algebraic surface
  and a criterion for simplicity}, Inst. Hautes \'Etudes Sci. Publ. Math.
  (1961), no.~9, 5--22.

\bibitem{Prill67}
D.~Prill, \emph{Local classification of quotients of complex manifolds by
  discontinuous groups}, Duke Math. J. \textbf{34} (1967), 375--386.

\bibitem{Satake56}
I.~Satake, \emph{On a generalization of the notion of manifold}, Proc. Nat.
  Acad. Sci. U.S.A. \textbf{42} (1956), 359--363.

\bibitem{Steenbrink77}
J.H.M. Steenbrink, \emph{Mixed {H}odge structure on the vanishing cohomology},
  Real and complex singularities ({P}roc. {N}inth {N}ordic {S}ummer
  {S}chool/{NAVF} {S}ympos. {M}ath., {O}slo, 1976), Sijthoff and Noordhoff,
  Alphen aan den Rijn, 1977, pp.~525--563.

\bibitem{Veys97}
Willem Veys, \emph{Zeta functions for curves and log canonical models}, Proc.
  London Math. Soc. (3) \textbf{74} (1997), no.~2, 360--378.

\end{thebibliography}
%
\def\cprime{$'$}
\providecommand{\bysame}{\leavevmode\hbox to3em{\hrulefill}\thinspace}
\providecommand{\MR}{\relax\ifhmode\unskip\space\fi MR }
\providecommand{\MRhref}[2]{%
  \href{http://www.ams.org/mathscinet-getitem?mr=#1}{#2}
}
\providecommand{\href}[2]{#2}

\end{document}